\newcommand{\beas}{\begin{eqnarray*}}
\newcommand{\enas}{\end{eqnarray*}}
\newcommand{\bea}{\begin{eqnarray}}
\newcommand{\ena}{\end{eqnarray}}
\newcommand{\bms}{\begin{multline*}}
\newcommand{\ems}{\end{multline*}}
\newcommand{\bels}{\begin{align*}}
\newcommand{\enls}{\end{align*}}
\newcommand{\bel}{\begin{align}}
\newcommand{\enl}{\end{align}}
\newcommand{\ignore}[1]{}
\newtheorem{theorem}{Theorem}[section]
\newtheorem{corollary}{Corollary}[section]
\newtheorem{lemma}{Lemma}[section]
\newtheorem{definition}{Definition}[section]
\newtheorem{assumption}{Assumption}[section]
\def\blfootnote{\xdef\@thefnmark{}\@footnotetext}
\newcommand{\expect}[1]{\mathbb{E}{\l[#1\r]}}
\newcommand{\mf}[1]{\mathbf{#1}}
\newcommand{\dotp}[2]{\left\langle#1,#2\right\rangle}
\newcommand{\mb}{\mathbb}
\newcommand\argmin{\mathop{\mbox{argmin}}}
\def\r{\right}
\def\l{\left}
\begin{document}
\begin{frontmatter}
\title{\Large Primal-Dual Frank-Wolfe for Constrained Stochastic Programs with Convex and Non-convex Objectives}
\runtitle{Convex and Non-convex}

\begin{aug}
\author{\fnms{Xiaohan} \snm{Wei}\thanksref{t1}\ead[label=e1]{xiaohanw@usc.edu}}
\and
\author{\fnms{Michael} \snm{J. Neely}\thanksref{t1}\ead[label=e3]{mikejneely@gmail.com}}

\thankstext{t1}{Department of Electrical Engineering, University of Southern California. This work is supported in part by NSF grant CCF-1718477.}
\runauthor{Neely and Wei}

\affiliation{University of Southern California}
\printead{e1,e3}
\end{aug}

\maketitle

\begin{abstract}
We study constrained stochastic programs where the decision 
vector at each time slot cannot be chosen freely but is tied to 
 the realization of an underlying random state vector. The goal is to 
minimize a general objective function subject to linear constraints. 
A typical scenario where such programs appear is  opportunistic 
scheduling over a network of time-varying channels, where the random state 
vector is the channel state observed, and the control vector is the transmission 
decision which depends on the current channel state.  
We consider a primal-dual type Frank-Wolfe algorithm that 
has a low complexity update during each slot and that learns to make
efficient decisions without prior knowledge of the 
probability distribution of the random state vector. 
We establish convergence time guarantees for the case of both convex 
and non-convex objective functions.  We also emphasize application of the 
algorithm to non-convex opportunistic 
scheduling and distributed non-convex stochastic optimization over a connected graph. 
\end{abstract}

\end{frontmatter}

\section{Introduction}
Consider a slotted system with time $t\in\{0,1,2,\cdots\}$.
Let $\l\{S[t]\r\}_{t=0}^\infty$ be a sequence of independent and identically distributed (i.i.d.) system state vectors that take values in some set 
$\mathcal S \subseteq \mathbb R^m$, where $m$ is a positive integer. The state vectors $S[t]$ have a probability distribution function
$F_S(s) := Pr(S\leq s)$ for all $s\in\mathbb R^m$ (the vector inequality $S \leq s$ is taken to be entrywise).  Every time slot $t$, the controller observes a realization $S[t]$ and selects a decision vector 
$x_t\in\mathcal X_{S[t]}$, where $\mathcal{X}_{S[t]}$ is a set of decision options available when the system state is $S[t]$. Assume that for all slots $t$, the set   $\mathcal X_{S[t]}$ is a compact subset of some larger compact and convex set 
$\mathcal{B}\subseteq \mathbb{R}^d$.

For each $t\in \{0, 1, 2, \ldots\}$, let $\mathcal{H}_t$ be the system history up to time slot $t$ (not including slot $t$).  Specifically, 
$\mathcal{H}_t$ consists of the past information $\{S[0], \ldots, S[t-1], x_0, \ldots, x_{t-1}\}$. 
We have the following definition:
\begin{definition}
A randomized stationary algorithm is a method of choosing $x_t\in\mathcal{X}_{S[t]}$ as a stationary and randomized function of $S[t]$, i.e. has the same conditional distribution given that the same $S[t]$ value is observed, and independent of $\mathcal{H}_t$.
\end{definition}

Let $\Gamma^*$ be the set of all ``one-shot'' expectations $\expect{x_t}\in\mathbb R^d$ that are possible at any given time slot $t$, considering all possible randomized stationary algorithms for choosing $x_t\in\mathcal X_{S[t]}$ in reaction to the observed $S[t]$. Since $\mathcal X_{S[t]}\subseteq \mathcal{B}$, it follows $\Gamma^*$ is a bounded set.
Define $\overline\Gamma^*$ as the closure of $\Gamma^*$. 
It can be shown that both $\Gamma^*$ and $\overline\Gamma^*$ are convex 
subsets of $\mathcal{B}$ (\cite{neely2010stochastic}).  Let $f:\mathcal{B}\rightarrow\mathbb{R}$ be a given 
cost function and $\mathbf{a}_i\in\mathbb{R}^d,~i=1,2,\cdots,N$ be a given collection of constraint vectors.  
The goal is to make sequential decisions that leads to the construction of a vector $\gamma \in \mathbb{R}^d$ that solves
the following optimization problem:
\begin{align}
\min~~&f(\gamma)    \label{prob-1}\\
s.t.~~&\gamma\in \overline\Gamma^* \label{prob-2}\\
&\dotp{\mathbf{a}_i}{\gamma}\leq b_i,~\forall i\in\{1,2,\cdots,N\} \label{prob-3}
\end{align}
where $b_i,~i=1,2,\cdots,N$ are given constants, and $\dotp{\mathbf{a}_i}{\gamma}$ denotes the dot product of vectors $\gamma$ and $\mathbf{a}_i$.  The constraint \eqref{prob-3} is equivalent to $\mathbf{A}\gamma \leq \mathbf{b}$
with definitions $\mathbf{A}:=[\mathbf{a}_1,\mathbf{a}_2,\cdots,\mathbf{a}_N]^T$ and 
$\mathbf{b}:=[b_1,b_2,\cdots,b_N]^T$. 

The problem of producing a vector $\gamma$ that solves 
\eqref{prob-1}-\eqref{prob-3} is particularly challenging when the probability distribution for $S[t]$ is unknown. 
Algorithms that make decisions without knowledge of this distribution are called \emph{statistics-unaware} algorithms
and are the focus of the current paper. The first part of this paper treats convex functions $f$ and 
the vector $\gamma$ shall correspond to 
the actual time averages achieved by the decisions $x_t$ made over time.  This part considers 
convergence to an $\varepsilon$-approximation of the global optimal solution to  \eqref{prob-1}-\eqref{prob-3}, for arbitrary
$\varepsilon>0$. The second part of this paper treats non-convex  
$f$ and generates a vector $\gamma$ that is informed by the $x_t$ decisions but is not the time average. 
This non-convex analysis does not treat convergence to a global optimum. Rather, it 
considers convergence to a vector that makes a local sub-optimality gap small.   

Such a program \eqref{prob-1}-\eqref{prob-3} 
appears in a number of scenarios including wireless scheduling systems and power optimization in a
network subject to queue stability (e.g. \cite{agrawal2002optimality, prop-fair-down, andrews2005optimal, eryilmaz2007fair, lee2006opportunistic, stolyar2005asymptotic, stolyar2005maximizing}).  
For example, consider a wireless system with $d$ users that transmit over their own links. The wireless channels can change over time and this affects the set of transmission rates available
for scheduling. Specifically, the random sequence $\l\{ S[t] \r\}_{t=0}^\infty$ can be a process of independent and identically distributed (i.i.d.) channel state vectors that take values in some set $\mathcal{S}\subseteq \mb R^d$. The decision variable $x_t$ is the transmission rate vector chosen from $\Gamma_{S[t]}$, which is the set of available rate vectors determined by the observation $S[t]$, and $\overline\Gamma^*$ is often referred to as the \emph{capacity region} of the network.  The function $f(\cdot)$ often represents the negative network utility function, and the constraints \eqref{prob-3} represent additional system requirements 
or resource restrictions, such as constraints on average power consumption, requirements on throughput and fairness, etc.  Such a problem is called \textit{opportunistic scheduling} because the network controller can observe the current channel state vector and can choose to transmit over certain links whenever their channel states are good.

\subsection{Stochastic optimization with convex objectives}
A typical algorithm solving (\ref{prob-1}-\ref{prob-3}) is the \textit{drift-plus-penalty (DPP) algorithm} (e.g. \cite{neely2010stochastic, neely2008fairness}). It is shown that when the function $f(\cdot)$ is convex,  this algorithm achieves an $\varepsilon$ approximation with the convergence time $\mathcal{O}(1/\varepsilon^2)$ under mild assumptions. The algorithm features a ``virtual queue'' $Q_i(t)$ for each constraint, which is 0 at $t=0$ and updated as follows:
\begin{equation}\label{eq:queue-update}
Q_i(t+1) = \max\l\{Q_i(t) + \dotp{\mathbf{a}_i}{x_t} - b_i,0\r\},~i=1,2,\cdots,N.
\end{equation}
Then, at each time slot, the system chooses $x_t\in\mathcal{X}_{S[t]}$ after observing $S[t]$ as follows:
\begin{equation}\label{eq:dpp}
x_t := \argmin_{x\in\mathcal{X}_{S[t]}} \left[\frac{1}{\varepsilon}f(x) + \sum_{i=1}^NQ_i(t)\dotp{\mathbf{a}_i}{x}\right].
\end{equation}
This algorithm does not require knowledge of the distribution of $S[t]$ (it is \emph{statistics-unaware}) 
and does not require $f(\cdot)$ to be differentiable or smooth. However, it requires one to have  full knowledge of the function 
$f(\cdot)$ and to solve \eqref{eq:dpp} efficiently at each time slot.

Another body of work considers primal-dual gradient based methods solving (\ref{prob-1}-\ref{prob-3}), where, instead of solving the subproblem like \eqref{eq:dpp} slot-wise, they solve a subproblem which involves only the gradient of $f(\cdot)$ at the current time step (e.g. \cite{agrawal2002optimality}\cite{prop-fair-down} for unconstrained problems and \cite{stolyar2005asymptotic, stolyar2005maximizing} for constrained problems). Specifically, assume the function $f$ is differentiable and define
the gradient at a point $x\in\mathcal{B}$ by 
\[
\nabla f(x) := \l[\frac{\partial f(x)}{\partial x_1},~\frac{\partial f(x)}{\partial x_2},~\cdots,~\frac{\partial f(x)}{\partial x_d} \r]^T,
\]
where $x_j$ is the $j$-th entry of $x\in\mb R^d$. Then, introduce at parameter $\beta>0$ and a queue variable $Q_i(t)$ for each constraint $i$, which is updated in the same way as \eqref{eq:queue-update}. Then, during each time slot, the system solves the following problem: $\gamma_{-1} = 0$,
\begin{equation}\label{gradient-update}
x_t := \argmin_{x\in\mathcal{X}_{S[t]}}  \l( \dotp{\nabla f(\gamma_{t-1})}{x} + \beta\sum_{i=1}^NQ_i(t)\dotp{\mathbf{a}_i}{x}\r),
\end{equation}
and $\gamma_t = (1-\beta)\gamma_{t-1} + \beta x_t$. This method requires $f(\cdot)$ to be smooth and convex.
It is analyzed in \cite{stolyar2005asymptotic, stolyar2005maximizing} via a control system approach approximating the dynamics of $(x_t,Q_1(t),\cdots,Q_N(t))$ by the trajectory of a continuous dynamical system (as $\beta\rightarrow0$). Such an argument makes precise convergence time bounds difficult to obtain. To the best of our knowledge, there is no formal convergence time analysis of this primal-dual gradient algorithm.

More recently, the work \cite{neely2017frank} analyzes gradient type algorithms 
solving the stochastic problem without linear constraints:
\begin{equation}\label{eq:ppp}
\min~~f(\gamma)  ~~
s.t.~~\gamma\in \overline\Gamma^*, 
\end{equation}
where $\overline\Gamma^*$ has the same definition as \eqref{prob-2}. The algorithm starts with $\gamma_{-1} = 0$ and solves the following linear optimization each time slot,
\[
x_t := \argmin_{x\in\mathcal{X}_{S[t]}}  \dotp{\nabla f(\gamma_{t-1})}{x},
\]
then, the algorithm updates $\gamma_t = (1-\beta_t)\gamma_{t-1} + \beta_t x_t$, where $\beta_t$ is a sequence of pre-determined weights. The work \cite{neely2017frank} shows that the online time averages of this 
algorithm have precise $\mathcal{O}(1/\varepsilon^2)$ and $\mathcal{O}(\log(1/\varepsilon)/\varepsilon)$ convergence time bounds reaching an 
$\varepsilon$ approximation, depending on the choices of $\beta_t$.
When the set $\mathcal{X}_{S[t]}$ is fixed and does not change with time, this algorithm reduces to  the Frank-Wolfe algorithm for deterministic convex minimization (e.g. \cite{bubeck2015convex, jaggi2013revisiting, nesterov2015complexity}), 
which serves as a starting point of this work.

\subsection{Optimization with non-convex objectives}
While convex stochastic optimization has been studied extensively and various algorithms have been developed, much less is known when $f(\cdot)$ is non-convex. 
On the other hand, non-convex problems have many applications in network utility maximization and in other areas of network science. For example, suppose we desire to maximize a non-concave utility function of throughput in a wireless network. An example is the
``sigmoidal-type'' utility function (Fig. \ref{ss}), which has flat regions both near the origin and when the attribute is large, characterizing a typical scenario where we earn small
utility unless throughput crosses a threshold and the utility does not increase too much if we keep increasing the throughput. 

 \begin{figure}[htbp]
 \centering
   \includegraphics[width=3in]{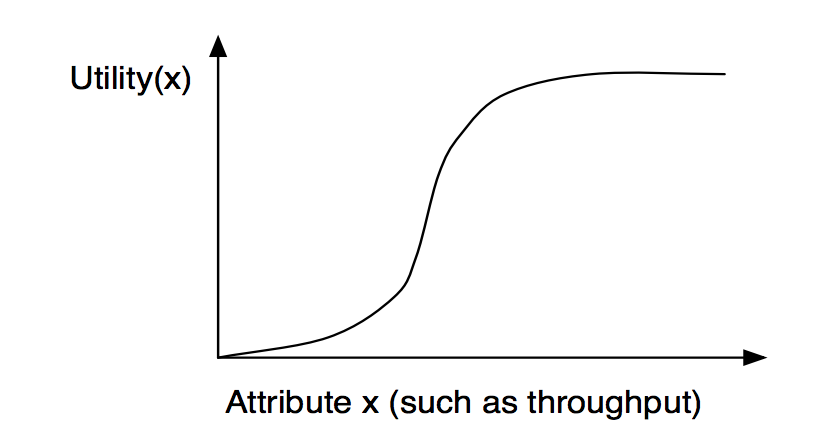} 
   \caption{A sigmoidal-type utility function} 
   \label{ss}
\end{figure}

Constrained non-convex stochastic optimization with a sum of such utility functions is known to be difficult to solve and heuristic algorithms have been proposed in several works to solve this problem (e.g. \cite{chiang2009nonconvex, lee2005non}). In \cite{chiang2009nonconvex}, the author proposes a convex relaxation heuristic treating the deterministic scenario where $\mathcal{X}_{S[t]}$ is fixed. The work \cite{lee2005non} considers the static network scheduling problem and develops a method which has optimality properties in a limit of a large number of users. The work \cite{neely2010stochastic-nonconvex} develops a primal-dual type algorithm which can be shown to achieve a local optimal of (\ref{prob-1}-\ref{prob-3}) when a certain sequence generated by the algorithm itself is assumed to converge. However, whether or not such a sequence converges is not known.

More recently, non-convex optimization has regained attention mainly due to the need for  training deep neural networks and other machine learning models, and a thorough review of relevant literature is beyond the scope of this paper. Here, we only highlight several works related to the formulation (\ref{prob-1}-\ref{prob-3}). The works \cite{wang2015global, yang2017alternating} show that ADMM type algorithms converge when minimizing a non-convex objective (which can be written in multiple blocks) subject to linear constraints. The work \cite{li2016douglas} shows the Douglas-Rachford operator splitting method converges when solving non-convex composite optimization problems, which includes the linear constrained non-convex problem as a special case. On the other hand, Frank-Wolfe type algorithms have also been applied to solve non-convex constrained problems. The work \cite{lacoste2016convergence} considers using Frank-Wolfe to solve problems of the form 
$$\min_{x\in \mathcal{M}}f(x),$$ 
where $f(\cdot)$ is possibly non-convex, and shows that the ``Frank-Wolfe gap'', which measures the local suboptimality, converges on the order of  
$1/\sqrt{T}$ when running for $T$ slots. The work \cite{reddi2016stochastic} generalizes the previous results to solve 
$$\min_{x\in \mathcal{M}}\mathbb{E}_{\theta}F(x,\theta),$$
where the expectation is taken with respect to the random variable $\theta$. Note that this problem is fundamentally different from \eqref{eq:ppp} because the above problem aims at choosing a \textit{fixed} $x\in\mathcal{M}$ to minimize the expectation of a function, whereas the problem \eqref{eq:ppp} aims at choosing a \textit{policy} $x_t\in\mathcal{X}_{S[t]}$, in reaction to $S[t]$ at each time slot, whose expectation $\gamma$ minimizes $f(\gamma)$.

\subsection{Contributions}
In this work, we propose a primal-dual Frank-Wolfe algorithm solving (\ref{prob-1}-\ref{prob-3}) with both convex and nonconvex objectives. Specifically,
\begin{itemize}
\item When the objective is convex, we show that the proposed algorithm gives a convergence time of $\mathcal{O}(1/\varepsilon^3)$. Further, we show an improved convergence time of 
$\mathcal{O}(1/\varepsilon^2)$ holds under a mild assumption on existence of a Lagrange multiplier vector. 
Such rates tie with the best known convergence time for (\ref{prob-1}-\ref{prob-3}) achieved by the Drift-plus-penalty algorithm \eqref{eq:dpp} but with possibly lower complexity update per slot, because we only require minimizing the linear approximation of $f(\cdot)$ rather than $f(\cdot)$ itself. Furthermore, this result also implies a precise convergence time guarantee for the primal dual gradient algorithm \eqref{gradient-update} in earlier works.
\item When the objective is non-convex, we show our proposed algorithm converges to the local minimum with $\mathcal{O}(1/\varepsilon^3)$ convergence time on the ``Frank-Wolfe'' gap, and an improved $\mathcal{O}(1/\varepsilon^2)$ convergence time when Slater's condition holds. To the best of authors' knowledge, this is the first algorithm that treats (\ref{prob-1}-\ref{prob-3}) 
for non-convex objectives with provable convergence guarantees on the ``Frank-Wolfe'' gap. 
We also emphasize the application of our proposed algorithm to non-convex opportunistic scheduling and non-convex distributed stochastic optimization.
\end{itemize}

\section{Algorithm}
In this section, we introduce a primal-dual type algorithm that works for both convex and non-convex cost functions $f(\cdot)$.
Throughout the paper, we assume that the set $\overline{\Gamma}^*$ contains the origin, $f:\mathcal{B}\rightarrow\mathbb{R}$ is a smooth function with $L$-Lipschitz gradient, i.e.
\[
\|\nabla f(x) - \nabla f(y)\|\leq L\|x-y\|,~\forall x,y\in \mathcal{B},
\]  
where for any $x\in\mathbb{R}^d$, $\|x\|:=\sqrt{\sum_{i=1}^dx_i^2}$.  Furthermore, we assume the following quantities are bounded
by some constants $M, K, B, D$: 
\begin{align}
&\sup_{\gamma\in\mathcal{B}}\|\nabla f(\gamma)\|\leq M, \label{eq:M} \\
&\sup_{\gamma\in\mathcal{B}}|f(\gamma)|\leq K,\label{eq:K} \\
&\sup_{\gamma\in\mathcal{B}}\sum_{i=1}^N|\dotp{\mathbf{a}_i}{\gamma} - b_i|^2\leq B^2,~\forall i\in\{1,2,\cdots,N\}, \label{eq:B} \\
&\|x-y\|\leq D,~\forall x,y\in\mathcal{B}.   \label{eq:D} 
\end{align}

The proposed algorithm is as follows: For a time horizon $T>1$, let $\eta,V$ be two algorithm parameters that shall affect a performance tradeoff.  Assume throughout that $0 < \eta < 1$.  Let 
$\mathbf Q(t) = [Q_1(t),~Q_2(t),\cdots,~Q_N(t)]$ be a vector of virtual queues with $Q_i(0) = 0,~i=1,2,\cdots,N$. These virtual queues can be viewed as penalizations to the violations of the constraints. 
Let $\gamma_{-1} = 0\in\mathbb{R}^d$ 
and at each time slot $t\in\{0,1,2,\cdots,T\}$, do the following:\footnote{Since $0 \in \overline{\Gamma}^*$   and $\overline{\Gamma}^*\subseteq \mathcal{B}$, it
holds that $\gamma_{-1}=0 \in \mathcal{B}$.}

\begin{enumerate}
\item Observe $S[t]$ and choose $x_t\in\mathcal{X}_{S[t]}$ to solve the following linear optimization problem:
\begin{equation}\label{sub-problem}
\min_{x\in\mathcal{X}_{S[t]}}   \l( V\dotp{\nabla f(\gamma_{t-1})}{x} + \sum_{i=1}^NQ_i(t)\dotp{\mathbf{a}_i}{x}\r).
\end{equation}
\item Update $\gamma_t$ via
$
\gamma_t = (1-\eta)\gamma_{t-1}+\eta x_t.
$ 
Update the virtual queue via 
\begin{equation*}
Q_i(t+1) = \max\l\{Q_i(t) + \dotp{\mathbf{a}_i}{x_t} - b_i,0\r\},~i=1,2,\cdots,N.
\end{equation*}
\end{enumerate}

Since $\mathcal{X}_{S[t]}$ is assumed to be a compact set, there always exists at least one solution to the optimization problem \eqref{sub-problem}. Furthermore, this algorithm is similar to the primal dual gradient type algorithm \eqref{gradient-update}. Specifically, when choosing $V= 1/\beta$ and $\eta = \beta$,  \eqref{sub-problem} is the same as \eqref{gradient-update}. Thus, convergence analysis of this algorithm can be applied to the primal-dual gradient algorithm as a special case. But as we shall see, having different weights in the algorithm sometimes helps speed up the convergence. 

\section{Performance bounds for convex cost functions}

In this section, we analyze the convergence time of the proposed algorithm when the cost function $f(\cdot)$ is smooth and 
convex. We focus on the performance of the \textbf{time average solution
$\overline x_T := \frac{1}{T}\sum_{t=0}^{T-1}x_t$}.

\subsection{Preliminaries}
We start with the definition of \emph{$\varepsilon$-near optimality}. 
\begin{definition}
Let $\gamma^*$ be the solution to (\ref{prob-1}-\ref{prob-3}). 
Given an accuracy level $\varepsilon>0$, a point $\gamma\in\overline{\Gamma}^*$ is said to be an $\varepsilon$-near optimal solution   to 
(\ref{prob-1}-\ref{prob-3}) if it satisfies
\[
f(\gamma)\leq f(\gamma^*)+\varepsilon,~~\dotp{a_i}{\gamma}-b_i\leq\varepsilon,~\forall i\in\{1,2,\cdots,N\}.
\]
\end{definition}

Next, we review some basic properties of smooth convex functions. it is known that every convex and differentiable function $f:\mathcal{B}\rightarrow \mathbb R$ satisfies the following inequality:
\begin{equation}\label{eq:convexity}
f(y) \geq f(x) + \dotp{\nabla f(x)}{y-x},~\forall x,y\in\mathcal{B}.
\end{equation}
Furthermore, every $L$-smooth function $f:\mathcal{B}\rightarrow \mathbb{R}$ satisfies the following inequality, often called the \emph{descent lemma}:
\begin{equation}\label{eq:smooth}
f(y) \leq f(x) + \dotp{\nabla f(x)}{y-x} + \frac L2\|y-x\|^2
\end{equation}


\begin{lemma}\label{lem:key-equation}
For any smooth cost  
function $f:\mathcal{B}\rightarrow \mathbb{R}$ and every time slot $t\in\{0,1,2,\cdots\}$, the proposed algorithm guarantees that
\begin{equation}\label{eq:key-equation}
V\expect{\dotp{\nabla f(\gamma_{t-1})}{x_t-\gamma_{t-1}} | \mathcal{H}_t} + \expect{\dotp{\mathbf Q(t)}{\mathbf{A}x_t-\mathbf{b}}|\mathcal{H}_t}\leq 
V\dotp{\nabla f(\gamma_{t-1})}{ \gamma^*-\gamma_{t-1}},
\end{equation}
where $\gamma^*$ is any point satisfying the constraints (\ref{prob-2}-\ref{prob-3}).
\end{lemma}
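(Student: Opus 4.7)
The plan is to compare the minimizer $x_t$ of the per-slot subproblem against the choice made by a randomized stationary algorithm whose one-shot expectation equals (or approaches) $\gamma^*$, and then exploit nonnegativity of $\mathbf{Q}(t)$ together with feasibility of $\gamma^*$ to discard the extra term that appears on the right-hand side. The key is that both $\gamma_{t-1}$ and $\mathbf{Q}(t)$ are determined by $\mathcal{H}_t$, so they act as constants inside the conditional expectation, and the subproblem \eqref{sub-problem} is minimized pointwise over $\mathcal{X}_{S[t]}$ for each realization of $S[t]$.

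Concretely, first I would observe that since $\gamma^* \in \overline{\Gamma}^*$, there exists (up to a closure argument) a randomized stationary rule producing some $x_t^\star \in \mathcal{X}_{S[t]}$ with $\expect{x_t^\star \mid \mathcal{H}_t} = \gamma^*$; this uses the fact that randomized stationary algorithms are independent of $\mathcal{H}_t$ and depend only on $S[t]$, whose distribution is time-invariant. Because $x_t$ minimizes the expression $V\dotp{\nabla f(\gamma_{t-1})}{x} + \dotp{\mathbf{Q}(t)}{\mathbf{A}x}$ over $x \in \mathcal{X}_{S[t]}$, we get the sample-path inequality
\begin{equation*}
V\dotp{\nabla f(\gamma_{t-1})}{x_t} + \dotp{\mathbf{Q}(t)}{\mathbf{A}x_t} \leq V\dotp{\nabla f(\gamma_{t-1})}{x_t^\star} + \dotp{\mathbf{Q}(t)}{\mathbf{A}x_t^\star}.
\end{equation*}
Subtracting $V\dotp{\nabla f(\gamma_{t-1})}{\gamma_{t-1}} + \dotp{\mathbf{Q}(t)}{\mathbf{b}}$ from both sides and taking $\expect{\cdot \mid \mathcal{H}_t}$ (using $\mathcal{H}_t$-measurability of $\gamma_{t-1}$ and $\mathbf{Q}(t)$ and the identity $\expect{x_t^\star \mid \mathcal{H}_t} = \gamma^*$) yields
\begin{equation*}
V\expect{\dotp{\nabla f(\gamma_{t-1})}{x_t - \gamma_{t-1}} \mid \mathcal{H}_t} + \expect{\dotp{\mathbf{Q}(t)}{\mathbf{A}x_t - \mathbf{b}} \mid \mathcal{H}_t} \leq V\dotp{\nabla f(\gamma_{t-1})}{\gamma^* - \gamma_{t-1}} + \dotp{\mathbf{Q}(t)}{\mathbf{A}\gamma^* - \mathbf{b}}.
\end{equation*}
The final step is to drop the residual term $\dotp{\mathbf{Q}(t)}{\mathbf{A}\gamma^* - \mathbf{b}}$: an easy induction on the queue update shows $Q_i(t) \geq 0$ for every $i$ and $t$, while feasibility of $\gamma^*$ gives $\mathbf{A}\gamma^* - \mathbf{b} \leq 0$ componentwise, so this inner product is nonpositive and can be discarded to give \eqref{eq:key-equation}.

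The only real obstacle is the closure issue: if $\gamma^*$ lies only in $\overline{\Gamma}^*$ and not in $\Gamma^*$ itself, there need not be an exact randomized stationary rule achieving $\gamma^*$. I would handle this by picking a sequence $\gamma^{(n)} \in \Gamma^*$ with $\gamma^{(n)} \to \gamma^*$, applying the argument above to each $\gamma^{(n)}$ (which remains feasible with respect to \eqref{prob-3} in the limit since both sides of that constraint are continuous in $\gamma$), and then passing to the limit using continuity of $\dotp{\nabla f(\gamma_{t-1})}{\cdot}$ and boundedness from \eqref{eq:M}, \eqref{eq:B}; alternatively the lemma may be stated for $\gamma^* \in \Gamma^*$ with the extension to the closure deferred, since the downstream use only needs a near-optimal feasible reference.
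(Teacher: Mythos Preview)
Your proposal is correct and follows essentially the same route as the paper: compare the minimizer $x_t$ against an arbitrary randomized stationary policy, take conditional expectations using $\mathcal{H}_t$-measurability of $\gamma_{t-1}$ and $\mathbf{Q}(t)$, drop the term $\dotp{\mathbf{Q}(t)}{\mathbf{A}\gamma^*-\mathbf{b}}$ via nonnegativity of the queues and feasibility of $\gamma^*$, and handle the closure $\overline{\Gamma}^*$ by a limiting argument. The only cosmetic difference is that the paper drops the residual term first (for feasible $\gamma\in\Gamma^*$) and then passes to the limit, whereas you pass to the limit first and then drop; both orderings work.
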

\begin{proof}[Proof of Lemma \ref{lem:key-equation}]
For any time slot $t$, since $x_t\in\mathcal{X}_{S[t]}$ solves \eqref{sub-problem}, the following holds:
\[
V\dotp{\nabla f(\gamma_{t-1})}{x_t-\gamma_{t-1}} + \dotp{\mathbf Q(t)}{\mathbf{A}x_t-\mathbf{b}}
\leq V \dotp{\nabla f(\gamma_{t-1})}{v_t-\gamma_{t-1}} + \dotp{\mathbf Q(t)}{\mathbf{A}v_t-\mathbf{b}},
\]
where $v_t\in\mathcal{X}_{S[t]}$ is a vector selected by any randomized stationary algorithm. As a consequence, any vector $\gamma\in\Gamma^*$ satisfying $\dotp{\mathbf{a}_i}{\gamma}\leq b_i,~i=1,2,\cdots,N$ can be achieved by some $v_t$ such that $\gamma = \expect{v_t}$. Taking conditional expectations from both sides of the above inequality, conditioning on $\mathcal{H}_t$, we have 
\begin{align*}
&V \expect{\dotp{\nabla f(\gamma_{t-1})}{x_t-\gamma_{t-1}} | \mathcal{H}_t} + \expect{\dotp{\mathbf Q(t)}{\mathbf{A}x_t-\mathbf{b}} | \mathcal{H}_t}\\
&\leq V  \expect{\dotp{\nabla f(\gamma_{t-1})}{v_t-\gamma_{t-1}} | \mathcal{H}_t} + \expect{\dotp{\mathbf Q(t)}{\mathbf{A}v_t-\mathbf{b}} | \mathcal{H}_t}\\
&\overset{(a)}{=} V \dotp{\nabla f(\gamma_{t-1})}{ \expect{v_t| \mathcal{H}_t}-\gamma_{t-1}} + \dotp{\mathbf Q(t)}{\expect{\mathbf{A}v_t-\mathbf{b} | \mathcal{H}_t}- \mathbf{b}} \\
&\overset{(b)}{=} V \dotp{\nabla f(\gamma_{t-1})}{ \gamma-\gamma_{t-1}} + \dotp{\mathbf Q(t)}{\mathbf{A}\gamma-\mathbf{b}}\\
&\overset{(c)}{\leq} V \dotp{\nabla f(\gamma_{t-1})}{ \gamma-\gamma_{t-1}},
\end{align*}
where (a) follows from the fact that $\gamma_{t-1}$ and $\mathbf Q(t)$ are determined by $\mathcal{H}_t$; (b)  follows from the fact that $v_t\in\mathcal{X}_{S[t]}$ is generated by a randomized stationary algorithm and independent of the system history $\mathcal{H}_t$, which implies $\expect{v_t| \mathcal{H}_t} = \expect{v_t} = \gamma$; (c) follows from the assumption that $\gamma$ satisfies the constraint 
$\mathbf{A}\gamma\leq\mathbf{b}$ and $\mathbf{Q}(t)\geq0$, where the inequalities are taken to be entrywise.
Note that the preceding inequality holds for any $\gamma\in\Gamma^*$ satisfying $\mathbf{A}\gamma\leq\mathbf{b}$. Taking a limit as $\gamma\rightarrow \gamma^*$ for any $\gamma^*$ satisfying the constraints (\ref{prob-2}-\ref{prob-3}) gives the result.
\end{proof}

\begin{lemma}\label{lem:key-equation-2}
For any smooth convex cost function $f:\mathcal{B}\rightarrow \mathbb{R}$ and every time slot $t\in\{0,1,2,\cdots\}$, the proposed algorithm guarantees that
\begin{equation}\label{eq:key-equation-2}
V \expect{\dotp{\nabla f(\gamma_{t-1})}{x_t-\gamma_{t-1}}} + \expect{\dotp{\mathbf Q(t)}{\mathbf{A}x_t-\mathbf{b}}}\leq 
V (f(\gamma^*) - \expect{f(\gamma_{t-1})}),
\end{equation}
where $\gamma^*$ is any point satisfying the constraints (\ref{prob-2}-\ref{prob-3}).
\end{lemma}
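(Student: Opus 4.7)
The plan is to derive Lemma \ref{lem:key-equation-2} as a direct consequence of Lemma \ref{lem:key-equation} combined with the first-order convexity inequality \eqref{eq:convexity}. The conclusion of Lemma \ref{lem:key-equation} has $V\dotp{\nabla f(\gamma_{t-1})}{\gamma^*-\gamma_{t-1}}$ on the right-hand side, so the only thing to do is to replace this linearization of $f$ at $\gamma_{t-1}$ by the difference $f(\gamma^*)-f(\gamma_{t-1})$ using convexity, and then take a total expectation.

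Concretely, I would proceed in three short steps. First, invoke Lemma \ref{lem:key-equation} to get the conditional inequality
\begin{equation*}
V\expect{\dotp{\nabla f(\gamma_{t-1})}{x_t-\gamma_{t-1}} \mid \mathcal{H}_t} + \expect{\dotp{\mathbf Q(t)}{\mathbf{A}x_t-\mathbf{b}} \mid \mathcal{H}_t}\leq V\dotp{\nabla f(\gamma_{t-1})}{\gamma^*-\gamma_{t-1}}.
\end{equation*}
Second, apply the convexity inequality \eqref{eq:convexity} with $x=\gamma_{t-1}$ and $y=\gamma^*$ to obtain
\begin{equation*}
\dotp{\nabla f(\gamma_{t-1})}{\gamma^*-\gamma_{t-1}} \leq f(\gamma^*)-f(\gamma_{t-1}),
\end{equation*}
so that the right-hand side above is bounded by $V(f(\gamma^*)-f(\gamma_{t-1}))$. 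Note that since $\gamma_{t-1}$ is $\mathcal{H}_t$-measurable, $f(\gamma_{t-1})$ behaves as a constant under the conditioning, so this bound is legitimate inside the conditional expectation.

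Third, take the total expectation of both sides using the tower property $\expect{\expect{\cdot\mid\mathcal{H}_t}}=\expect{\cdot}$. This yields exactly \eqref{eq:key-equation-2}. There is no real obstacle: Lemma \ref{lem:key-equation} already absorbed the argument that required comparing $x_t$ against a randomized stationary benchmark and using $\mathbf{Q}(t)\ge 0$ together with $\mathbf{A}\gamma^*\le\mathbf{b}$, so the present lemma is essentially a convexity upgrade of that result. The only subtlety worth flagging in the write-up is the measurability remark justifying that $f(\gamma_{t-1})$ can be pulled in and out of the conditional expectation, but this is routine and not a genuine difficulty.
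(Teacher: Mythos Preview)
Your proposal is correct and matches the paper's proof essentially line for line: the paper also applies the convexity inequality \eqref{eq:convexity} to bound $\dotp{\nabla f(\gamma_{t-1})}{\gamma^*-\gamma_{t-1}}$ by $f(\gamma^*)-f(\gamma_{t-1})$, substitutes this into \eqref{eq:key-equation}, and then takes a full expectation.
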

\begin{proof}[Proof of Lemma \ref{lem:key-equation-2}]
By the inequality \eqref{eq:convexity} for convex functions, we have
\[
\dotp{\nabla f(\gamma_{t-1})}{  \gamma^* - \gamma_{t-1}}
\leq f(\gamma^*) - f(\gamma_{t-1}).
\]
Substituting this inequality into  \eqref{eq:key-equation} gives
\[
V \expect{\dotp{\nabla f(\gamma_{t-1})}{x_t-\gamma_{t-1}} | \mathcal{H}_t} + \expect{\dotp{\mathbf Q(t)}{\mathbf{A}x_t-\mathbf{b}}|\mathcal{H}_t}\leq
V  (f(\gamma^*) - f(\gamma_{t-1}))
\]
Taking expectations from both sides of the above inequality gives the result.
\end{proof}

\begin{lemma}\label{lem:q-constraint}
Let $\overline x_T := \frac{1}{T}\sum_{t=0}^{T-1}x_t$. Then, we have for any $i\in\{1,2,\cdots,N\}$,
\[
\dotp{\mathbf{a}_i}{\expect{\overline{x}_T}}-b_i\leq \frac{\expect{Q_i(T)}}{T}.
\]
\end{lemma}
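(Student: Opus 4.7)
The plan is to exploit the telescoping structure of the virtual queue update. The key observation is that dropping the max in the update $Q_i(t+1) = \max\{Q_i(t) + \dotp{\mathbf{a}_i}{x_t} - b_i, 0\}$ can only increase the right-hand side, so the queue satisfies the drift inequality
\[
Q_i(t+1) \geq Q_i(t) + \dotp{\mathbf{a}_i}{x_t} - b_i.
\]

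First I would rearrange this to $Q_i(t+1) - Q_i(t) \geq \dotp{\mathbf{a}_i}{x_t} - b_i$ and sum over $t = 0, 1, \ldots, T-1$. The left-hand side telescopes to $Q_i(T) - Q_i(0) = Q_i(T)$, using the initial condition $Q_i(0) = 0$. The right-hand side becomes $\sum_{t=0}^{T-1} \dotp{\mathbf{a}_i}{x_t} - T b_i = T \bigl(\dotp{\mathbf{a}_i}{\overline{x}_T} - b_i\bigr)$ by the definition of $\overline{x}_T$ and linearity of the inner product.

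Then I would divide both sides by $T$ and take expectations, using linearity of expectation on the inner product, to obtain the stated bound
\[
\dotp{\mathbf{a}_i}{\expect{\overline{x}_T}} - b_i \leq \frac{\expect{Q_i(T)}}{T}.
\]
There is no real obstacle here — the entire argument is a one-line telescoping plus the trivial lower bound on $Q_i(t+1)$ coming from dropping the $\max\{\cdot, 0\}$. The lemma is essentially a standard virtual-queue bookkeeping fact and will be used later to convert bounds on $\expect{Q_i(T)}$ into constraint violation bounds for the running time average $\overline{x}_T$.
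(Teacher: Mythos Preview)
Your proposal is correct and matches the paper's proof essentially line for line: both drop the $\max$ to obtain $Q_i(t+1)\geq Q_i(t)+\dotp{\mathbf{a}_i}{x_t}-b_i$, telescope over $t=0,\ldots,T-1$ using $Q_i(0)=0$, then take expectations and divide by $T$.
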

\begin{proof}[Proof of Lemma \ref{lem:q-constraint}]
By virtual queue updating rule, we have
\[
Q_i(t+1)= \max\l\{ Q_i(t) + \dotp{\mathbf{a}_i}{x_t} - b_i,~0 \r\}\geq Q_i(t) + \dotp{\mathbf{a}_i}{x_t} - b_i.
\]
Summing over $t\in\{0,1,2,\cdots,T-1\}$ gives
\[
Q_i(T) \geq Q_i(0) + \sum_{t=0}^{T-1}\l(\dotp{\mathbf{a}_i}{x_t} - b_i\r) = \sum_{t=0}^{T-1}\l(\dotp{\mathbf{a}_i}{x_t} - b_i\r)
\]
which uses $Q_i(0)=0$. 
Taking expectations and dividing by $T$ gives the inequality. 
\end{proof} 

\begin{lemma}\label{lem:diff}
Let $\overline x_T := \frac{1}{T}\sum_{t=0}^{T-1}x_t$. Then 
\[
 \overline{x}_T  - \mbox{$\frac{1}{T}\sum_{t=0}^{T-1}\gamma_{t-1}$}= \frac{\gamma_{T-1}}{\eta T},
\]
and thus 
$$\|\overline{x}_{T} - \mbox{$\frac{1}{T}\sum_{t=0}^{T-1} \gamma_{t-1}$}\| \leq\frac{D}{\eta T}$$
where $D$ is defined in \eqref{eq:D}. 
\end{lemma}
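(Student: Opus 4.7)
The plan is to unwind the recursion $\gamma_t=(1-\eta)\gamma_{t-1}+\eta x_t$ into a telescoping sum. Solving for $x_t$ gives
\[
x_t = \gamma_{t-1} + \frac{\gamma_t-\gamma_{t-1}}{\eta},
\]
so summing from $t=0$ to $T-1$ collapses the second term into $(\gamma_{T-1}-\gamma_{-1})/\eta$. Since $\gamma_{-1}=0$ by initialization, this yields
\[
\sum_{t=0}^{T-1} x_t \;=\; \sum_{t=0}^{T-1}\gamma_{t-1} + \frac{\gamma_{T-1}}{\eta},
\]
and dividing by $T$ gives exactly the claimed identity $\overline{x}_T - \frac{1}{T}\sum_{t=0}^{T-1}\gamma_{t-1} = \gamma_{T-1}/(\eta T)$.

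For the norm bound, I would observe that $\gamma_{T-1}\in\mathcal{B}$. This follows inductively: $\gamma_{-1}=0\in\mathcal{B}$ (since $0\in\overline{\Gamma}^*\subseteq\mathcal{B}$ by the standing assumption), and each $\gamma_t$ is a convex combination of $\gamma_{t-1}\in\mathcal{B}$ and $x_t\in\mathcal{X}_{S[t]}\subseteq\mathcal{B}$, so convexity of $\mathcal{B}$ keeps $\gamma_t$ in $\mathcal{B}$. Applying the diameter bound \eqref{eq:D} with the two points $\gamma_{T-1}\in\mathcal{B}$ and $0\in\mathcal{B}$ then gives $\|\gamma_{T-1}\|\leq D$, and dividing by $\eta T$ finishes the proof.

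There is really no hard step here — the argument is a one-line telescoping plus the diameter bound. The only subtlety worth flagging explicitly is the need to verify $\gamma_{T-1}\in\mathcal{B}$ (used to invoke \eqref{eq:D}), which is why I would include the short inductive remark about convexity of $\mathcal{B}$ rather than take membership for granted.
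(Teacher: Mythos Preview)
Your proof is correct and follows essentially the same approach as the paper: both rewrite the update as $x_t-\gamma_{t-1}=(\gamma_t-\gamma_{t-1})/\eta$, telescope over $t=0,\ldots,T-1$, use $\gamma_{-1}=0$, and then invoke the diameter bound \eqref{eq:D} with the pair $\gamma_{T-1},0\in\mathcal{B}$. Your explicit inductive justification that $\gamma_{T-1}\in\mathcal{B}$ is a nice addition that the paper leaves implicit.
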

\begin{proof}[Proof of Lemma \ref{lem:diff}]
Note that 
$\gamma_t = (1-\eta)\gamma_{t-1} + \eta x_t$ for all $t \in \{0, 1, 2, \ldots\}$. Thus, 
\begin{equation*}
x_t-\gamma_{t-1} = \frac{\gamma_t-\gamma_{t-1}}{\eta}
\end{equation*}
Summing the above equality over $t \in \{0, \ldots, T-1\}$ yields 
$$  \sum_{t=0}^{T-1} (x_t-\gamma_{t-1}) = \frac{\gamma_{T-1}-\gamma_{-1}}{\eta} $$
Dividing the above equality by $T$ and using $\gamma_{-1}=0$ gives
$$  \overline{x}_{T} -\frac{1}{T}\sum_{t=0}^{T-1} \gamma_{t-1}= \frac{\gamma_{T-1}}{\eta T}$$
Thus, 
\[
\|\overline x_T - \mbox{$\frac{1}{T}\sum_{t=0}^{T-1} \gamma_{t-1}$}\| = \frac{1}{\eta T}\|\gamma_{T-1}\|\leq\frac{D}{\eta T},
\]
where the final inequality uses the fact that  $\gamma_{T-1} \in \mathcal{B}$ and $0 \in \mathcal{B}$ and 
so the boundedness assumption \eqref{eq:D} ensures  $\|\gamma_{T-1}-0\| \leq D$. 
\end{proof}

\subsection{Convergence time bound}

Define the drift $\Delta(t) := \frac12\l( \|\mathbf Q(t+1)\|^2 - \|\mathbf Q(t)\|^2 \r)$. 
We have the following cost and virtual queue bounds:
\begin{theorem}\label{thm:convex-1}
For any $T>1$, and any $\gamma^*\in\overline\Gamma^*$ we have
\begin{align*}
&\expect{f(\overline{x}_T)}\leq f(\gamma^*) + \frac{2K+MD}{\eta T} + \frac{B^2}{2V} + \frac{LD^2\eta}{2} ,\\
&\frac{\expect{\|\mathbf Q(T)\|^2}}{T^2}\leq \frac{4KV}{T} + \frac{4KV}{T^2\eta} +\frac{B^2}{T} +\frac{LD^2V\eta}{T}.
\end{align*}
and by Jensen's inequality the objective function bound also holds for $f(\expect{\overline{x}_{T}})$. 
\end{theorem}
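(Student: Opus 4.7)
The plan is to assemble a drift-plus-penalty style inequality by adding a one-slot bound on $\Delta(t)$ to Lemma \ref{lem:key-equation-2}, sum from $t=0$ to $T-1$, telescope the Frank-Wolfe gradient sum using the smoothness descent lemma \eqref{eq:smooth}, and finally convert the averaged iterate $\tfrac{1}{T}\sum_{t=0}^{T-1}\gamma_{t-1}$ into $\overline{x}_T$ via Jensen's inequality together with Lemma \ref{lem:diff}.

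First I would establish the standard drift bound. Starting from $Q_i(t+1)=\max\{Q_i(t)+\langle \mathbf{a}_i,x_t\rangle-b_i,0\}$, squaring, using $(\max\{z,0\})^2\leq z^2$, summing over $i$, and applying \eqref{eq:B} yields $\Delta(t)\leq \langle \mathbf{Q}(t),\mathbf{A}x_t-\mathbf{b}\rangle + B^2/2$. Adding $V\langle \nabla f(\gamma_{t-1}),x_t-\gamma_{t-1}\rangle$ to both sides, taking expectations, and invoking Lemma \ref{lem:key-equation-2} produces
$$\mathbb{E}[\Delta(t)]+V\mathbb{E}[\langle\nabla f(\gamma_{t-1}),x_t-\gamma_{t-1}\rangle]\leq V(f(\gamma^*)-\mathbb{E}[f(\gamma_{t-1})]) + \tfrac{B^2}{2}.$$
Summing over $t\in\{0,\ldots,T-1\}$, the drift telescopes (using $\mathbf{Q}(0)=0$) to $\tfrac{1}{2}\mathbb{E}[\|\mathbf{Q}(T)\|^2]$. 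For the gradient sum, the identity $\gamma_t-\gamma_{t-1}=\eta(x_t-\gamma_{t-1})$ combined with \eqref{eq:smooth} and $\|x_t-\gamma_{t-1}\|\leq D$ from \eqref{eq:D} rearranges to
$$V\langle\nabla f(\gamma_{t-1}),x_t-\gamma_{t-1}\rangle\geq \tfrac{V}{\eta}\bigl(f(\gamma_t)-f(\gamma_{t-1})\bigr)-\tfrac{LV\eta D^2}{2},$$
which telescopes to $\tfrac{V}{\eta}(\mathbb{E}[f(\gamma_{T-1})]-f(0))-\tfrac{LV\eta D^2 T}{2}\geq -\tfrac{2KV}{\eta}-\tfrac{LV\eta D^2 T}{2}$ by \eqref{eq:K}. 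This gives the master inequality
$$\tfrac{1}{2}\mathbb{E}[\|\mathbf{Q}(T)\|^2]\leq V\sum_{t=0}^{T-1}\bigl(f(\gamma^*)-\mathbb{E}[f(\gamma_{t-1})]\bigr) + \tfrac{B^2 T}{2}+\tfrac{2KV}{\eta}+\tfrac{LV\eta D^2 T}{2}.$$

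Both theorem bounds follow from this master inequality. For the objective bound, Jensen's inequality combined with $M$-Lipschitz continuity of $f$ on $\mathcal{B}$ (from \eqref{eq:M}) and Lemma \ref{lem:diff} gives
$$f(\overline{x}_T)\leq f\Bigl(\tfrac{1}{T}\sum_{t=0}^{T-1}\gamma_{t-1}\Bigr)+\tfrac{MD}{\eta T}\leq \tfrac{1}{T}\sum_{t=0}^{T-1}f(\gamma_{t-1})+\tfrac{MD}{\eta T},$$
so $\sum_{t=0}^{T-1}\mathbb{E}[f(\gamma_{t-1})]\geq T\mathbb{E}[f(\overline{x}_T)]-MD/\eta$; substituting into the master inequality, dropping the non-negative queue term, and dividing by $VT$ produces the first bound. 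For the queue bound, using $f(\gamma^*)-\mathbb{E}[f(\gamma_{t-1})]\leq 2K$ slot-wise via \eqref{eq:K} and dividing through by $T^2/2$ produces the second. The main bookkeeping hazard is reconciling the $V/\eta$ prefactor introduced by the descent-lemma telescope with the $V$ prefactor coming from Lemma \ref{lem:key-equation-2}; carrying the boundary term $f(\gamma_{T-1})-f(0)$ through correctly and absorbing it into $2K/\eta$ contributions is what produces the $2K/(\eta T)$ and $4KV/(T^2\eta)$ terms in the two bounds.
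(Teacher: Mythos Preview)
Your proposal is correct and follows essentially the same approach as the paper: both combine the per-slot drift bound $\Delta(t)\leq \langle\mathbf{Q}(t),\mathbf{A}x_t-\mathbf{b}\rangle + B^2/2$ with Lemma~\ref{lem:key-equation-2} and the descent lemma \eqref{eq:smooth}, telescope over $t$, and then pass from $\frac{1}{T}\sum_{t}\gamma_{t-1}$ to $\overline{x}_T$ via Lemma~\ref{lem:diff} plus the gradient bound \eqref{eq:M}. The only cosmetic differences are that the paper scales by $\eta$ before adding the drift (yielding its intermediate inequality \eqref{eq:inter-3}) whereas you add $\Delta(t)$ unscaled and produce an equivalent ``master inequality,'' and the paper writes the $\theta_T\to\overline{x}_T$ step using the convexity inequality \eqref{eq:convexity} while you phrase it as $M$-Lipschitz continuity; both yield the same $MD/(\eta T)$ correction.
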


\begin{proof}[Proof of Theorem \ref{thm:convex-1}]

First of all, using the smoothness of the cost function $f(\cdot)$, for any $t\in\{0,1,2,\cdots\}$,
\[
f(\gamma_t)\leq f(\gamma_{t-1}) + \dotp{\nabla f(\gamma_{t-1})}{\gamma_t-\gamma_{t-1}} + \frac{L}{2}\|\gamma_t-\gamma_{t-1}\|^2.
\]
Substituting $\gamma_t- \gamma_{t-1} = \eta(x_t-\gamma_{t-1})$  
into the above gives
\[
f(\gamma_t)\leq f(\gamma_{t-1}) + \eta\dotp{\nabla f(\gamma_{t-1})}{x_t-\gamma_{t-1}} + \frac{L\eta^2}{2}\|x_t - \gamma_{t-1}\|^2
\]
Multiplying by $V$, adding $\eta\Delta(t)$ to both sides and taking expectations gives 
\begin{align}
&V\expect{f(\gamma_t)} + \eta\expect{\Delta(t)}   \nonumber\\
&\leq V\expect{f(\gamma_{t-1})} + V\eta\expect{\dotp{\nabla f(\gamma_{t-1})}{x_t-\gamma_{t-1}}} + \frac{LV\eta^2}{2}\expect{\|x_t-\gamma_{t-1}\|^2} + \eta \expect{\Delta(t)}  \nonumber\\
&\overset{(a)}{\leq}  V\expect{f(\gamma_{t-1})} + V\eta\expect{\dotp{\nabla f(\gamma_{t-1})}{x_t-\gamma_{t-1}}} + \frac{LD^2V\eta^2}{2} 
+ \frac{\eta B^2}{2} + \eta\expect{\dotp{\mathbf Q(t)}{\mathbf{A}x_t-\mathbf{b}}}   \nonumber\\
&\overset{(b)}{\leq} V\expect{f(\gamma_{t-1})}+ V\eta(f(\gamma^*) - \expect{f(\gamma_{t-1})}) + \frac{LD^2V\eta^2+\eta B^2}{2}, \label{eq:inter-2}
\end{align}
where (a) follows from $\|x_t - \gamma_{t-1}\|^2\leq D^2$  (by \eqref{eq:D}), and
\begin{multline*}
\Delta(t) = \frac12\l( \|\mathbf Q(t+1)\|^2 - \|\mathbf Q(t)\|^2 \r)
\leq\frac12\l( \sum_{i=1}^N\l(Q_i(t) + \dotp{\mathbf{a}_i}{x_t} - b_i \r)^2 - \sum_{i=1}^NQ_i(t)^2\r)\\
= \frac12\sum_{i=1}^N\l( \dotp{\mathbf{a}_i}{x_t} - b_i \r)^2 + \sum_{i=1}^NQ_i(t)\l( \dotp{\mathbf{a}_i}{x_t} - b_i \r)
\leq  \frac{B^2}{2} + \dotp{\mathbf Q(t)}{\mathbf{A}x_t-\mathbf{b}},
\end{multline*}
with the final inequality holding by the $B^2$ bound \eqref{eq:B}; while (b) follows from Lemma \ref{lem:key-equation-2}. Thus, \eqref{eq:inter-2} implies
\begin{equation}\label{eq:inter-3}
\expect{f(\gamma_{t-1})}\leq f(\gamma^*) - \frac{\expect{f(\gamma_t)}}{\eta}+\frac{\expect{f(\gamma_{t-1})}}{\eta} - \frac{\expect{\Delta(t)}}{V} 
+ \frac{B^2}{2V} + \frac{LD^2}{2}\eta,
\end{equation}
Summing both sides over  $t\in \{0, \ldots, T-1\}$ and dividing by $T$ gives 
\begin{align*}
\frac1T\sum_{t=0}^{T-1}\expect{f(\gamma_{t-1})}\leq& f(\gamma^*) - \frac{\expect{f(\gamma_{T-1})}}{\eta T}+\frac{\expect{f(\gamma_{-1})}}{\eta T} 
- \frac{\expect{\|\mathbf Q(T)\|^2-\|\mathbf Q(0)\|^2}}{2VT} 
+ \frac{B^2}{2V} + \frac{LD^2}{2}\eta\\
\overset{(a)}{\leq}& f(\gamma^*) + \frac{2K}{\eta T} + \frac{B^2}{2V} + \frac{LD^2}{2}\eta
\end{align*}
where (a) follows from the $K$ bound in \eqref{eq:K} and from $\mathbf Q(0)=0$. 
Applying Jensen's inequality gives
\begin{equation}\label{eq:inter-1.1}
\expect{f\left(\underbrace{\mbox{$\frac{1}{T}\sum_{t=0}^{T-1}\gamma_{t-1}$}}_{\theta_T}\right)}\leq f(\gamma^*) + \frac{2K}{\eta T} + \frac{B^2}{2V} + \frac{LD^2}{2}\eta.
\end{equation}
where $\theta_T = \frac{1}{T}\sum_{t=0}^{T-1}\gamma_{t-1}$ is defined for convenience. 
By \eqref{eq:convexity},
\begin{align*}
f(\theta_T)\geq& f(\overline x_{T}) + \dotp{\nabla f(\overline x_{T})}{\theta_T - \overline x_{T}} \\
\geq& f(\overline x_{T}) - \|\nabla f(\overline x_{T})\| \cdot\|\theta_T - \overline x_{T}\|\\
\geq&f(\overline x_{T}) - \frac{MD}{\eta T} ,
\end{align*}
where the last inequality follows from the $M$ bound in \eqref{eq:M} 
and the fact that $\| \theta_T - \overline{x}_T\| \leq D/(\eta T)$ by Lemma \ref{lem:diff}. Overall, we have
\[  f(\overline x_{T}) \leq f(\theta_T) + \frac{MD}{\eta T} 
\]
Substituting this bound into \eqref{eq:inter-1.1} completes the proof of the objective bound.

To get the $\mathbf{Q}(T)$ bound, we sum  \eqref{eq:inter-3} over $t \in \{0, \ldots, T-1\}$ to obtain 
\begin{equation}\label{eq:inter-last}
\sum_{t=-1}^{T-2}\expect{f(\gamma_{t})}\leq Tf(\gamma^*) - \frac{\expect{f(\gamma_{T-1})}}{\eta }+\frac{\expect{f(\gamma_{-1})}}{\eta } 
- \frac{\expect{\|\mathbf Q(T)\|^2-\|\mathbf Q(0)\|^2}}{2V } 
+ \frac{B^2T}{2V} + \frac{LD^2}{2}\eta T.
\end{equation}
Rearranging the terms with the fact that $\mathbf Q(0) = 0$ gives
\begin{align*}
\expect{\|\mathbf Q(T)\|^2} \leq& 2V\l(  Tf(\gamma^*) - \sum_{t=-1}^{T-2}\expect{f(\gamma_{t})} \r) + \frac{2V}{\eta}\l(\expect{f(\gamma_{-1})} - \expect{f(\gamma_{T-1})}\r) + B^2T + LD^2\eta VT\\
\leq& 4KVT + \frac{4K V}{\eta} + B^2 T + LD^2\eta VT,
\end{align*}
which implies the claim dividing $T^2$ from both sides.
\end{proof}

\begin{corollary}\label{col:convex-1}
Let $V=T^{1/3}$ and $\eta=\frac{1}{T^{2/3}}$, then, we have objective and constraint violation bounds as follows,
\begin{align*}
&\expect{f(\overline{x}_T)}\leq f(\gamma^*) + \l( 2K + MD + \frac{B^2}{2} + \frac{LD^2}{2T^{1/3}}\r)\frac{1}{T^{1/3}},\\
&\dotp{\mathbf{a}_i}{\expect{\overline{x}_T}} - b_i \leq \left[\sqrt{4K+ \frac{4K}{T^{1/3}} + \frac{B^2}{T^{1/3}} + \frac{LD^2}{T^{2/3}}}\right]\frac{1}{T^{1/3}},~\forall i\in\{1,2,\cdots,N\}.
\end{align*}
and by Jensen's inequality the objective function bound also holds for $f(\expect{\overline{x}_{T}})$. 
Thus, to achieve an $\varepsilon$-near optimality, the convergence time is $\mathcal{O}(1/\varepsilon^3)$.
\end{corollary}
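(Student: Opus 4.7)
The proof is essentially a direct substitution into the two bounds from Theorem \ref{thm:convex-1}, combined with Lemma \ref{lem:q-constraint} to translate the queue bound into a constraint violation bound. My plan is as follows.

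First, I would substitute $V = T^{1/3}$ and $\eta = T^{-2/3}$ into the objective bound of Theorem \ref{thm:convex-1}. The three perturbation terms become
$(2K+MD)/(\eta T) = (2K+MD)T^{-1/3}$, $B^2/(2V) = (B^2/2)T^{-1/3}$, and $LD^2 \eta/2 = (LD^2/2)T^{-2/3}$, which factor as $[2K + MD + B^2/2 + LD^2/(2T^{1/3})]/T^{1/3}$, yielding the first inequality. The extension to $f(\expect{\overline{x}_T})$ is immediate from Jensen's inequality applied to the convex $f$.

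Second, for the constraint violation, I would chain three bounds. By Lemma \ref{lem:q-constraint}, $\dotp{\mathbf{a}_i}{\expect{\overline{x}_T}} - b_i \leq \expect{Q_i(T)}/T$. By Jensen's inequality applied to $\sqrt{\cdot}$, $\expect{Q_i(T)} \leq \sqrt{\expect{Q_i(T)^2}} \leq \sqrt{\expect{\|\mathbf{Q}(T)\|^2}}$, so
\[
\dotp{\mathbf{a}_i}{\expect{\overline{x}_T}} - b_i \leq \sqrt{\expect{\|\mathbf{Q}(T)\|^2}/T^2}.
\]
Plugging $V = T^{1/3}$, $\eta = T^{-2/3}$ into the queue bound of Theorem \ref{thm:convex-1} gives $4KV/T = 4KT^{-2/3}$, $4KV/(T^2\eta) = 4KT^{-1}$, $B^2/T = B^2 T^{-1}$, and $LD^2 V\eta / T = LD^2 T^{-4/3}$. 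Factoring $T^{-2/3}$ and taking the square root yields the stated $T^{-1/3}$-scaling with the square-root factor $\sqrt{4K + 4K/T^{1/3} + B^2/T^{1/3} + LD^2/T^{2/3}}$.

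Finally, both the objective gap and the constraint violation scale as $\mathcal{O}(T^{-1/3})$ with constants uniform in $T$ (since the terms in the square root and in the objective remainder are bounded above as $T \to \infty$). To achieve $\varepsilon$-near optimality one thus needs $T^{-1/3} \lesssim \varepsilon$, i.e., $T = \mathcal{O}(1/\varepsilon^3)$. There is no real obstacle here beyond bookkeeping of the exponents of $T$; the only mild subtlety is remembering to pass from $\expect{Q_i(T)}$ to $\sqrt{\expect{\|\mathbf{Q}(T)\|^2}}$ via Jensen before applying the second bound of Theorem \ref{thm:convex-1}.
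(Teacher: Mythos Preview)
Your proposal is correct and follows exactly the paper's own approach: substitute $V=T^{1/3}$, $\eta=T^{-2/3}$ into the two bounds of Theorem~\ref{thm:convex-1}, and for the constraint violation take the square root of the queue bound and combine with Lemma~\ref{lem:q-constraint}. The only detail you make more explicit than the paper is the Jensen step $\expect{Q_i(T)}\leq\sqrt{\expect{\|\mathbf{Q}(T)\|^2}}$, which is indeed needed and correct.
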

\begin{proof}
The objective bound follows directly from Theorem \ref{thm:convex-1}. The constraint violation bound follows from taking the square root of the virtual queue bound in Theorem \ref{thm:convex-1} and using Lemma \ref{lem:q-constraint}.
\end{proof}

\subsection{Convergence time improvement via Lagrange multipliers}

In this section, we show the objective and constraint violation bounds in Corollary \ref{col:convex-1} can be improved from $\mathcal{O}(1/\varepsilon^3)$ to $\mathcal{O}(1/\varepsilon^2)$ when a Lagrange multiplier vector exists.
 
\begin{assumption}\label{assume:Lagrange}
There exists a non-negative vector $\lambda\in \mathbb{R}^N$ such that
\[
f(\gamma) + \dotp{\lambda}{\mathbf{A}\gamma-\mathbf{b}}\geq f(\gamma^*), ~~\forall \gamma\in\overline{\Gamma}^*
\]
where $\gamma^*$ is the solution to the optimization problem (\ref{prob-1}-\ref{prob-3}).
\end{assumption}
It can be shown that such an assumption is mild and, in particular, it is true when a Slater  condition holds (see Assumption \ref{ass:slater} for specifics).

\begin{lemma}\label{thm:convex-2}
Let $\gamma^*\in\overline\Gamma^*$ be the solution to the optimization problem (\ref{prob-1}-\ref{prob-3}). For any $T>0$, 
\begin{align*}
&\expect{f(\overline{x}_T)}\leq f(\gamma^*) + \frac{2K+MD}{\eta T} + \frac{B^2}{2V} + \frac{LD^2\eta}{2},\\
&\expect{\|\mathbf{Q}(T)\|} \leq 2\|\lambda\| V + \sqrt{\frac{2\|\mathbf A^T\lambda\|DV+ 4KV}{\eta}+B^2T + LD^2\eta V T}
\end{align*}
\end{lemma}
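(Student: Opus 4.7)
The objective bound is identical to the one already proved in Theorem~\ref{thm:convex-1}, whose derivation never invoked Assumption~\ref{assume:Lagrange}; I would simply refer to that argument verbatim. The new content is the queue bound, and the plan is to replace the Lyapunov function $\tfrac12\|\mathbf Q(t)\|^2$ by its shifted counterpart
\[
 \tilde L(t) := \tfrac12\,\|\mathbf Q(t) - V\lambda\|^2.
\]
Because $V\lambda \geq \mathbf 0$, the truncation $(\cdot)_+$ in the queue update only pulls $\mathbf Q(t+1)$ closer to $V\lambda$, so the usual square-expansion gives
$\tilde\Delta(t) := \tilde L(t+1) - \tilde L(t) \leq \dotp{\mathbf Q(t) - V\lambda}{\mathbf A x_t - \mathbf b} + B^2/2$.

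I would then run the drift-plus-penalty step of Theorem~\ref{thm:convex-1} with $\tilde L$ in place of $\tfrac12\|\mathbf Q\|^2$: bound $V(f(\gamma_t) - f(\gamma_{t-1}))$ using the descent lemma, add $\eta\tilde\Delta(t)$, condition on $\mathcal H_t$, and regroup. Setting $\hat x_t := \expect{x_t\mid\mathcal H_t}$, Lemma~\ref{lem:key-equation} together with~\eqref{eq:convexity} yields
\[
 V\dotp{\nabla f(\gamma_{t-1})}{\hat x_t - \gamma_{t-1}} + \dotp{\mathbf Q(t)}{\mathbf A\hat x_t - \mathbf b} \leq V\bigl(f(\gamma^*) - f(\gamma_{t-1})\bigr),
\]
so what survives in the per-slot inequality is the residual $-V\eta\dotp{\lambda}{\mathbf A\hat x_t - \mathbf b}$ together with the familiar $\tfrac12(LV\eta^2 D^2 + \eta B^2)$ term.

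The decisive step is handling this residual after summing $t = 0,\ldots,T-1$. I would split $\mathbf A\hat x_t - \mathbf b = \mathbf A(\hat x_t - \gamma_{t-1}) + (\mathbf A\gamma_{t-1} - \mathbf b)$. The update rule gives the identity $\eta(\hat x_t - \gamma_{t-1}) = \expect{\gamma_t - \gamma_{t-1}\mid\mathcal H_t}$, so after taking full expectations, the first piece telescopes to the single boundary term $-V\dotp{\mathbf A^T\lambda}{\expect{\gamma_{T-1}}}$, controlled in magnitude by $V\|\mathbf A^T\lambda\|D$ via Cauchy--Schwarz and~\eqref{eq:D}. For the second piece I would first show by induction (starting from $\gamma_{-1} = 0 \in \overline\Gamma^*$, using convexity of $\overline\Gamma^*$ and $\hat x_t \in \overline\Gamma^*$) that $\expect{\gamma_{t-1}} \in \overline\Gamma^*$, then apply Assumption~\ref{assume:Lagrange} at $\expect{\gamma_{t-1}}$ together with Jensen's inequality to obtain $-V\eta\dotp{\lambda}{\mathbf A\expect{\gamma_{t-1}} - \mathbf b} \leq V\eta\bigl(\expect{f(\gamma_{t-1})} - f(\gamma^*)\bigr)$. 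This contribution cancels exactly against the $V\eta\bigl(f(\gamma^*) - \expect{f(\gamma_{t-1})}\bigr)$ term originating from Lemma~\ref{lem:key-equation}; engineering this cancellation is the whole point of the decomposition.

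After the cancellations, together with $\mathbf Q(0)=0$ (contributing $\tilde L(0)=\tfrac12 V^2\|\lambda\|^2$) and $|f|\leq K$, one obtains $\eta\expect{\tilde L(T)} \leq \tfrac12\eta V^2\|\lambda\|^2 + 2VK + V\|\mathbf A^T\lambda\|D + \tfrac12(LV\eta^2 D^2 + \eta B^2)T$. Multiplying by $2/\eta$ gives the $\expect{\|\mathbf Q(T)-V\lambda\|^2}$ bound; then $\expect{\|\mathbf Q(T)\|} \leq V\|\lambda\| + \sqrt{\expect{\|\mathbf Q(T)-V\lambda\|^2}}$ (triangle inequality and Jensen) combined with $\sqrt{a+b}\leq \sqrt a+\sqrt b$ (to absorb the leading $V^2\|\lambda\|^2$ under the radical into a second copy of $V\|\lambda\|$) yields the claimed $2V\|\lambda\| + \sqrt{\cdots}$ expression. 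I expect the principal obstacle to be recognizing the telescoping identity $\eta(\hat x_t - \gamma_{t-1}) = \expect{\gamma_t - \gamma_{t-1}\mid\mathcal H_t}$: without it the naive diameter bound on $\dotp{\mathbf A^T\lambda}{\hat x_t - \gamma_{t-1}}$ produces an $O(\eta VT)$ term that swamps the $\eta$-improvement visible in $LD^2\eta VT$.
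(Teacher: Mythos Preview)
Your argument is correct and yields the same bound, but the route differs from the paper's. The paper keeps the unshifted Lyapunov $\tfrac12\|\mathbf Q(t)\|^2$ and simply reuses the intermediate inequality \eqref{eq:inter-last} already obtained in Theorem~\ref{thm:convex-1}. It then applies Assumption~\ref{assume:Lagrange} \emph{once}, to the single point $\frac1T\sum_{t=-1}^{T-2}\expect{\gamma_t}\in\overline\Gamma^*$, passes from this average to $\expect{\overline x_T}$ via Lemma~\ref{lem:diff}, and invokes Lemma~\ref{lem:q-constraint} to convert $-\dotp{\lambda}{\mathbf A\expect{\overline x_T}-\mathbf b}$ into $-\|\lambda\|\,\expect{\|\mathbf Q(T)\|}/T$. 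This produces a quadratic inequality in $\expect{\|\mathbf Q(T)\|}$ whose solution is exactly the stated bound. Your approach instead builds the Lagrange multiplier into the Lyapunov function from the outset via the shift $\mathbf Q(t)-V\lambda$, applies Assumption~\ref{assume:Lagrange} per slot at $\expect{\gamma_{t-1}}$ to cancel the $V\eta(f(\gamma^*)-\expect{f(\gamma_{t-1})})$ term, and telescopes the remainder. The final algebra (quadratic formula versus triangle inequality plus $\sqrt{a+b}\leq\sqrt a+\sqrt b$) is identical. Your version is self-contained and arguably cleaner conceptually; the paper's version is shorter in context because it recycles \eqref{eq:inter-last} and the two auxiliary lemmas rather than redoing the drift computation.
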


\begin{proof}[Proof of Lemma \ref{thm:convex-2}]
The objective bound follows directly from that of Theorem \ref{thm:convex-1}. To get the constraint violation bound, note that 
$\frac{1}{T}\sum_{t=-1}^{T-2}\expect{\gamma_t}\in\overline{\Gamma}^*$. By Assumption \ref{assume:Lagrange}, we have
\[
f\l(\frac{1}{T}\sum_{t=-1}^{T-2}\expect{\gamma_t}\r) + \dotp{\lambda}{\mathbf{A}\left(\frac{1}{T}\sum_{t=-1}^{T-2}\expect{\gamma_t}\right) - \mathbf{b} }\geq f(\gamma^*)
\]
By Jensen's inequality, this implies,
\begin{equation}\label{eq:inter-lagrange-1}
\frac{1}{T}\sum_{t=-1}^{T-2}\expect{f\l(\gamma_t\r)} + \dotp{\lambda}{\mathbf{A}\l(\frac{1}{T}\sum_{t=-1}^{T-2}\expect{\gamma_t}\r) - \mathbf{b} }\geq f(\gamma^*)
\end{equation}
By Lemma \ref{lem:diff} we have
\begin{align*}
\expect{\overline{x}_T} -\frac{1}{T}\sum_{t=-1}^{T-2}\expect{\gamma_t} 
= \frac{\expect{\gamma_{T-1}}}{\eta T}
\end{align*}
By \eqref{eq:inter-lagrange-1}, this implies 
\begin{multline*}\label{eq:inter-lagrange-2}
\frac{1}{T}\sum_{t=-1}^{T-2}\expect{f\l(\gamma_t\r)} - f(\gamma^*) \geq - \dotp{\lambda}{\mathbf{A}\expect{\overline{x}_T} - \mathbf{b} } + \frac{1}{\eta T} \dotp{\lambda}{\mathbf{A}\expect{\gamma_{T-1}}  }\\
\geq - \dotp{\lambda}{\mathbf{A}\expect{\overline{x}_T} - \mathbf{b} }
- \frac{1}{\eta T}\|\mathbf A^T\lambda\|D
\end{multline*}
Substituting this bound into \eqref{eq:inter-last} we have
\begin{multline*}
- \dotp{\lambda}{\mathbf{A} \expect{\overline{x}_T}-\mathbf{b} }
- \frac{1}{\eta T}\|\mathbf A^T\lambda\|D \\
\leq
- \frac{\expect{f(\gamma_{T-1})}}{\eta T }+\frac{\expect{f(\gamma_{-1})}}{\eta T} 
- \frac{\expect{\|\mathbf Q(T)\|^2-\|\mathbf Q(0)\|^2}}{2V T} 
+ \frac{B^2}{2V} + \frac{LD^2}{2}\eta.
\end{multline*}
By Lemma \ref{lem:q-constraint}, this further implies
\begin{multline*}
-\frac{\|\lambda\| \expect{\|\mathbf{Q}(T)\|}}{T}-\frac{1}{\eta }\|\mathbf A^T\lambda\|D\\
\leq
- \frac{\expect{f(\gamma_{T-1})}}{\eta T }+\frac{\expect{f(\gamma_{-1})}}{\eta T } 
- \frac{\expect{\|\mathbf Q(T)\|^2-\|\mathbf Q(0)\|^2}}{2V T} 
+ \frac{B^2}{2V} + \frac{LD^2}{2}\eta.
\end{multline*}
Rearranging the terms and substituting $\mathbf{Q}(0) = 0$, $|f(\gamma)|\leq K$ gives
\[
\expect{\|\mathbf Q(T)\|^2} -2  V\|\lambda\|\cdot\expect{\|\mathbf Q(T)\|}-\frac{2\|\mathbf A^T\lambda\|DV}{\eta}- \frac{4KV}{\eta}-B^2T - LD^2\eta VT \leq 0.
\]
By Jensen's inequality,
\[
\expect{\|\mathbf Q(T)\|}^2 -2 V\|\lambda\|\cdot\expect{\|\mathbf Q(T)\|}-\frac{2\|\mathbf A^T\lambda\|DV}{\eta}- \frac{4KV}{\eta}-B^2T - LD^2\eta VT \leq 0.
\]
Solving the quadratic inequality with respect to $\expect{\|\mathbf Q(T)\|}$ gives
\begin{align*}
\expect{\|\mathbf Q(T)\|}\leq& V\|\lambda\| + \sqrt{V^2\|\lambda\|^2+\frac{2\|\mathbf A^T\lambda\|DV+ 4KV}{\eta}+B^2T + LD^2\eta T}\\ 
\leq&2 V\|\lambda\| + \sqrt{\frac{2\|\mathbf A^T\lambda\|DV+ 4KV}{\eta}+B^2T + LD^2\eta V T},
\end{align*}
finishing the proof.
\end{proof}

\begin{theorem}\label{col:convex-1}
Let $V=\sqrt{T}$ and $\eta=\frac{1}{\sqrt{T}}$, then, we have objective and constraint violation bounds as follows,
\begin{align*}
&\expect{f(\overline{x}_T)}\leq f(\gamma^*) + \l( 2K + MD + \frac{B^2}{2} + \frac{LD^2}{2}\r)\frac{1}{T^{1/2}},\\
&\dotp{\mathbf{a}_i}{\expect{\overline{x}_T}} - b_i \leq \frac{2\|\lambda\|}{\sqrt{T}}+ \sqrt{\frac{2\|\mathbf A^T\lambda\|D+ 4K+B^2 + LD^2}{T}},~\forall i\in\{1,2,\cdots,N\}.
\end{align*}
and by Jensen's inequality the objective function bound also holds for $f(\expect{\overline{x}_{T}})$. 
Thus, to achieve an $\varepsilon$-near optimality, the convergence time is $\mathcal{O}(1/\varepsilon^2)$.
\end{theorem}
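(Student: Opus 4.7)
The plan is to derive this theorem as a direct corollary of Lemma \ref{thm:convex-2} together with Lemma \ref{lem:q-constraint} by substituting the specific parameter choices $V=\sqrt{T}$ and $\eta=1/\sqrt{T}$ and simplifying. No new inequalities need to be established; the work is algebraic bookkeeping.

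First I would handle the objective bound. Lemma \ref{thm:convex-2} gives
\[
\expect{f(\overline{x}_T)}\leq f(\gamma^*) + \frac{2K+MD}{\eta T} + \frac{B^2}{2V} + \frac{LD^2\eta}{2},
\]
and under $V=\sqrt{T}$, $\eta=1/\sqrt{T}$ each of the three penalty terms becomes $\mathcal{O}(1/\sqrt{T})$: specifically $(2K+MD)/\sqrt{T}$, $B^2/(2\sqrt{T})$, and $LD^2/(2\sqrt{T})$. Summing gives the claimed bound. The Jensen extension to $f(\expect{\overline{x}_T})$ is immediate from convexity of $f$.

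Next I would derive the constraint bound. Lemma \ref{lem:q-constraint} gives $\dotp{\mathbf{a}_i}{\expect{\overline{x}_T}}-b_i \leq \expect{Q_i(T)}/T \leq \expect{\|\mathbf{Q}(T)\|}/T$ for each $i$. Substituting the virtual queue bound from Lemma \ref{thm:convex-2},
\[
\frac{\expect{\|\mathbf Q(T)\|}}{T} \leq \frac{2\|\lambda\|V}{T} + \frac{1}{T}\sqrt{\frac{2\|\mathbf A^T\lambda\|DV+ 4KV}{\eta}+B^2T + LD^2\eta V T},
\]
and plugging in $V=\sqrt{T}$, $\eta=1/\sqrt{T}$, one checks that $V/\eta = T$ and $\eta V T = T$, so each term inside the radical is linear in $T$. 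Pulling the factor $T$ out of the square root cancels the outer $1/T$ down to $1/\sqrt{T}$, and the leading Lagrangian term contributes $2\|\lambda\|/\sqrt{T}$. The resulting expression matches the claimed constraint violation bound.

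Finally, the $\mathcal{O}(1/\varepsilon^2)$ convergence conclusion is immediate: both bounds are of order $1/\sqrt{T}$, so forcing each to be at most $\varepsilon$ requires $T = \mathcal{O}(1/\varepsilon^2)$. There is no real obstacle to this proof; the only care needed is to verify the two cancellations $V/\eta = T$ and $\eta V T = T$ under the chosen parameters, and to cite Assumption \ref{assume:Lagrange} (used in Lemma \ref{thm:convex-2}) as the source of the improvement relative to Corollary \ref{col:convex-1}.
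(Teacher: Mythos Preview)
Your proposal is correct and matches the paper's own proof essentially line for line: the paper simply states that the objective bound follows directly from Lemma \ref{thm:convex-2} and the constraint violation bound from Lemma \ref{thm:convex-2} combined with Lemma \ref{lem:q-constraint}, which is precisely the substitution-and-simplification you outline. Your verification of the key cancellations $V/\eta=T$ and $\eta V T=T$ is exactly the bookkeeping needed.
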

\begin{proof}
The objective bound follows directly from Lemma \ref{thm:convex-2}. The constraint violation bound follows from Lemma \ref{thm:convex-2}  and Lemma \ref{lem:q-constraint}.
\end{proof}

\section{Performance bounds for non-convex cost functions}

In this section, we focus on the case when the cost function $f(\cdot)$ is smooth but non-convex. 
The same algorithm shall be used, which generates vectors $\{x_0, x_1, \ldots, x_{T-1}\}$ and 
$\{\gamma_{-1},\gamma_0, \gamma_1, \ldots, \gamma_{T-1}\}$ over the course of $T$ slots.   However, 
instead of having $\overline{x}_T$ as the output, we focus on a \textbf{randomized solution $\gamma_\alpha$}, 
where $\alpha$ is a
uniform random variable taking values in $\{-1,0,1,\cdots,T-2\}$ and independent of any other event in the system. 
That is, the output $\gamma_{\alpha}$ is chosen uniformly over the set 
$\{\gamma_{-1}, \gamma_0, \ldots, \gamma_{T-2}\}$.

\subsection{Preliminaries}
Since the function $f(\cdot)$ is non-convex, finding a global solution is difficult. Thus, the criterion used for convergence analysis is important in non-convex optimization. In an unconstrained optimization problem, the magnitude of the gradient, i.e. $\|\nabla f\|$ is usually adopted to measure the convergence of the algorithm since $\nabla f(x) = 0$ implies $x$ is a stationary point. However, this cannot be used in constrained problems. Instead, the criterion for our problem (\ref{prob-1}-\ref{prob-3}) is the following:\footnote{This quantity is sometimes referred to as the ``Frank-Wolfe gap'' because it plays an important role in the analysis of Frank-Wolfe type algorithms.}
\begin{equation}\label{eq:FW-gap}
\mathcal{G}(\gamma) := \sup_{\{v\in\overline{\Gamma}^* : \mathbf{A}v\leq\mathbf{b}\}}\dotp{\nabla f(\gamma)}{\gamma - v}. 
\end{equation}
When the function $f(\cdot)$ is convex, this quantity upper bounds the suboptimality regarding the problem (\ref{prob-1}-\ref{prob-3}), which is an obvious outcome of \eqref{eq:convexity}. In the case of non-convex $f(\cdot)$, any point $\gamma\in\mathbb{R}^d$ which yields $\mathcal{G}(\gamma) = 0$ is a stationary point of (\ref{prob-1}-\ref{prob-3}). In particular, if the function $f$ is strictly increasing on each coordinate, then, any point $\gamma$ satisfying $\gamma\in\overline{\Gamma}^*,~\mathbf{A}\gamma\leq\mathbf{b}$ and $\mathcal{G}(\gamma) = 0$ must be on the boundary of the feasible set $\l\{v\in\mb R^d:~v\in\overline{\Gamma}^*,~\mathbf{A}v\leq\mathbf{b}\r\}$.
See, for example \cite{lacoste2016convergence} and \cite{reddi2016stochastic}, for more discussions and applications of this criterion.

Recall in the convex case, we proved performance bounds regarding $\expect{\overline{x}_T}\in\overline{\Gamma}^*$, so in the non-convex case, ideally, one would like to show similar bounds for $\expect{\gamma_\alpha}\in\overline{\Gamma}^*$.
However, 
proving such performance bounds turns out to be difficult for general non-convex stochastic problems. 
In particular, Jensen's inequality does not hold anymore, which prohibits us from passing expectations into the function.\footnote{Specifically, for non-convex $f$, performance bounds on $\expect{f(\gamma_\alpha)}$ do not imply  guarantees on $f(\expect{\gamma_\alpha})$. In contrast, if $f$ is convex then $\expect{f(\gamma_\alpha)}\geq f(\expect{\gamma_\alpha})$.} To obtain a meaningful performance guarantee for non-convex problems, we instead try to characterize the violation of $\gamma_\alpha$ over $\overline{\Gamma}^*$, which is measured through the following quantity:
\begin{definition}
For any point $\gamma\in\mathbb{R}^d$ and any closed convex set $\mathcal{S}\subset\mathbb{R}^d$, the distance of $\gamma$ to the set $\mathcal{S}$ is defined as
\begin{equation}\label{eq:distance}
\text{dist}\l(\gamma, \mathcal{S}\r):= \min_{v\in\mathcal{S}}\|\gamma-v\|.
\end{equation}
\end{definition}


Now, we are ready to state our definition of $\varepsilon$-near local optimal solution for non-convex $f$ functions.

\begin{definition}
Given an accuracy level $\varepsilon>0$, a random vector $\gamma\in\mathbb{R}^d$ is said to be an $\varepsilon$-near local optimal solution regarding 
(\ref{prob-1}-\ref{prob-3}) if it satisfies
\begin{align*}
&\expect{\mathcal{G}(\gamma)}\leq \varepsilon,\\
&\expect{\text{dist}\l(\gamma, \overline{\Gamma}^*\r)^2}\leq \varepsilon,\\
&\dotp{\mathbf{a}_i}{\expect{\gamma}}-b_i\leq\varepsilon,~\forall i\in\{1,2,\cdots,N\}.
\end{align*}
\end{definition}

\subsection{Convergence time bound}
Recall that we use $\gamma_\alpha$ as the output, where $\alpha$ is uniform random variable taking values in $\{-1,0,1,\cdots,T-2\}$.
We first bound the mean square distance of $\gamma_\alpha$ to the set $\overline{\Gamma}^*$, i.e.
$\expect{\text{dist}\l(\gamma_\alpha, \overline{\Gamma}^*\r)^2}$. To do so, we will explicitly construct a random feasible vector $\widetilde{\gamma}_\alpha\in \overline{\Gamma}^*$ and try to bound the distance between $\gamma_\alpha$ and $\widetilde{\gamma}_\alpha$, which is entailed in the following lemma.

\begin{lemma}\label{lem:path-average}
For all $T>0$, the output $\gamma_\alpha$ satisfies
\[
\expect{\text{dist}\l(\gamma_\alpha, \overline{\Gamma}^*\r)^2}\leq \eta D^2,
\]
\end{lemma}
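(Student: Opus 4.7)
The plan is to construct a surrogate sequence $\widetilde{\gamma}_t \in \overline{\Gamma}^*$ that tracks the algorithm's iterate $\gamma_t$ and to show via a martingale argument that $\expect{\|\gamma_t - \widetilde{\gamma}_t\|^2} \le \eta D^2$ for every $t$; this immediately bounds $\mathrm{dist}(\gamma_t, \overline{\Gamma}^*)^2$ and hence, after averaging over $\alpha$, yields the claim.

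First I would set $\widetilde{x}_t := \expect{x_t \mid \mathcal{H}_t}$. Conditioning on $\mathcal{H}_t$ freezes $\gamma_{t-1}$ and $\mathbf{Q}(t)$, so the rule \eqref{sub-problem} chooses $x_t$ as a function of $S[t]$ alone (with any tie-breaking randomness independent of $\mathcal{H}_t$), which is a randomized stationary policy in the sense of the paper's definition. Hence $\widetilde{x}_t \in \Gamma^* \subseteq \overline{\Gamma}^*$. Define $\widetilde{\gamma}_{-1} := 0$ and recursively $\widetilde{\gamma}_t := (1-\eta)\widetilde{\gamma}_{t-1} + \eta\widetilde{x}_t$; by convexity of $\overline{\Gamma}^*$ and induction, $\widetilde{\gamma}_t \in \overline{\Gamma}^*$ for every $t$.

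Next, let $e_t := \gamma_t - \widetilde{\gamma}_t$ and $\xi_t := x_t - \widetilde{x}_t$. Subtracting the two recursions gives $e_t = (1-\eta)e_{t-1} + \eta\xi_t$ with $e_{-1}=0$. Note $e_{t-1}$ is $\mathcal{H}_t$-measurable and $\expect{\xi_t \mid \mathcal{H}_t}=0$, so expanding $\|e_t\|^2$ and taking expectations annihilates the cross term:
\[
\expect{\|e_t\|^2} = (1-\eta)^2 \expect{\|e_{t-1}\|^2} + \eta^2 \expect{\|\xi_t\|^2}.
\]
Since $x_t, 0 \in \mathcal{B}$, the bound \eqref{eq:D} gives $\|x_t\| \le D$, and $\expect{\|\xi_t\|^2 \mid \mathcal{H}_t} \le \expect{\|x_t\|^2 \mid \mathcal{H}_t} \le D^2$. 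Iterating yields
\[
\expect{\|e_t\|^2} \le \eta^2 D^2 \sum_{k=0}^{t}(1-\eta)^{2(t-k)} \le \frac{\eta^2 D^2}{1-(1-\eta)^2} = \frac{\eta D^2}{2-\eta} \le \eta D^2,
\]
using $0 < \eta < 1$. Since $\widetilde{\gamma}_t \in \overline{\Gamma}^*$, $\mathrm{dist}(\gamma_t, \overline{\Gamma}^*)^2 \le \|e_t\|^2$, and averaging over $\alpha$ uniform on $\{-1,0,\dots,T-2\}$ finishes the proof (the $t=-1$ term is zero anyway because $\gamma_{-1}=0\in\overline{\Gamma}^*$).

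The key difficulty is getting the factor $\eta$ rather than just $D^2$. A naive convex-combination bound $\|e_t\|^2 \le (1-\eta)\|e_{t-1}\|^2 + \eta\|\xi_t\|^2$ (via Jensen) only gives $\expect{\|e_t\|^2} \le D^2$. It is essential to identify $\widetilde{x}_t$ as the \emph{conditional} mean so that $\xi_t$ is a martingale difference: that is what produces the $(1-\eta)^2$ contraction (rather than $1-\eta$) in the variance recursion and delivers the sharper $\eta D^2$ scaling.
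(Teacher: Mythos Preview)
Your proof is correct and follows essentially the same approach as the paper: both construct the surrogate $\widetilde{\gamma}_t$ via the conditional mean $\widetilde{x}_t=\expect{x_t\mid\mathcal{H}_t}$, exploit the martingale-difference property of $x_t-\widetilde{x}_t$ to kill the cross term, and establish $\expect{\|\gamma_t-\widetilde{\gamma}_t\|^2}\le\eta D^2$. The only cosmetic difference is that the paper closes the recursion by induction while you sum the geometric series explicitly (obtaining the slightly sharper $\eta D^2/(2-\eta)$ along the way).
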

\begin{proof}[Proof of Lemma \ref{lem:path-average}]
Recall that $\mathcal{H}_t$ is the system history up until time slot $t$. Let $\widetilde{x}_t = \expect{x_t | \mathcal{H}_t}$ be the conditional expectation given the system history. Note that $\widetilde{x}_t \in\overline{\Gamma}^*$. To see this, by \eqref{sub-problem}, the decision $x_t$ is completely determined by $\gamma_{t-1}$, $\mathbf{Q}(t)$ and $\mathcal{X}_{S[t]}$. Since $S[t]$ is i.i.d., it is independent of $\gamma_{t-1}$, $\mathbf{Q}(t)$, and thus the decision at time $t$ is generated by a randomized stationary algorithm which always uses the same fixed $\gamma=\gamma_{t-1}$, $\mathbf{Q}=\mathbf{Q}(t)$ values and at each time slot $t'\in\{0,1,2,\cdots\}$ chooses $x_{t'}\in\mathcal{X}_{S[t']}$ according to
\[
x_{t'}:=\argmin_{x\in\mathcal{X}_{S[t']}}    V\dotp{\nabla f(\gamma)}{x} + \sum_{i=1}^NQ_i\dotp{\mathbf{a}_i}{x}.
\]

Let $\widetilde{\gamma}_{-1} = \gamma_{-1} = 0\in\overline{\Gamma}^*$ by the assumption that $\overline{\Gamma}^*$ contains the origin.
For any fixed $t\in\{0,1,2,\cdots,T-1\}$, we iteratively define the following path averaging sequence $\widetilde{\gamma}_t$ as follows:
\[
\widetilde{\gamma}_t = (1-\eta)\widetilde{\gamma}_{t-1} + \eta \widetilde{x}_t.
\]
Note that the sequence $\widetilde{\gamma}_t\in\overline{\Gamma}^*,~\forall t=0,1,2,\cdots, T$ due to the fact that $\overline{\Gamma}^*$ is convex. We first show that $\gamma_t$ and $\widetilde\gamma_t$ are close to each other. Specifically, we use induction to show
\[
\expect{\|\gamma_t-\widetilde\gamma_t\|^2}\leq \eta D^2,~\forall t\in\{0, 1,2,\cdots\}.
\]
For $t = 0$, we have
\[
\expect{\|\gamma_0 - \widetilde\gamma_0\|^2} = \eta^2\expect{\|x_0 - \widetilde{x}_0\|^2}\leq \eta^2 D^2 \leq \eta D^2.
\]
For any $t\geq1$, we assume the claim holds for any slot before $t$,
\begin{align*}
&\expect{\|\gamma_t-\widetilde\gamma_t\|^2}\\
=&\expect{\|(1-\eta)(\gamma_{t-1}-\widetilde\gamma_{t-1}) + \eta(x_t-\widetilde x_t)\|^2} \\
=& \eta^2\expect{\|x_t-\widetilde x_t\|^2} + 2\eta(1-\eta)\expect{\dotp{\gamma_{t-1}-\widetilde\gamma_{t-1}}{x_t-\widetilde x_t}}
+(1-\eta)^2\expect{\| \gamma_{t-1}-\widetilde\gamma_{t-1} \|^2}\\
\overset{(a)}{=}&\eta^2\expect{\|x_t-\widetilde x_t\|^2}+(1-\eta)^2\expect{\| \gamma_{t-1}-\widetilde\gamma_{t-1} \|^2}\\
\leq&\eta^2D^2 + (1-\eta)^2\expect{\| \gamma_{t-1}-\widetilde\gamma_{t-1} \|^2}\\
\overset{(b)}{\leq}&\eta^2D^2 + (1-\eta)^2\eta D^2\leq \eta^2D^2 + (1-\eta)\eta D^2 = \eta D^2,
\end{align*}
where (a) follows from the fact that
\begin{align*}
\expect{\dotp{\gamma_{t-1}-\widetilde\gamma_{t-1}}{x_t-\widetilde x_t}}
= &\expect{\expect{\dotp{\gamma_{t-1}-\widetilde\gamma_{t-1}}{x_t-\widetilde x_t}|\mathcal{H}_t}}\\
=&\expect{\dotp{\gamma_{t-1}-\widetilde\gamma_{t-1}}{\widetilde x_t-\widetilde x_t}} = 0,
\end{align*}
while (b) follows from the induction hypothesis. 

Since $\alpha$ is a uniform random variable taking values in $\{-1, 0,1,\cdots,T-1\}$, it follows
\[
\expect{\|\gamma_\alpha-\widetilde\gamma_\alpha\|^2} = \frac{1}{T}\sum_{t=-1}^{T-2}\expect{\|\gamma_t-\widetilde\gamma_t\|^2}\leq \eta D^2,
\]
which implies the claim by the fact that $\widetilde\gamma_{\alpha}\in\overline{\Gamma}^*$ and $\expect{\text{dist}\l(\gamma_\alpha, \overline{\Gamma}^*\r)^2}\leq \expect{\|\gamma_\alpha-\widetilde\gamma_\alpha\|^2}$.
\end{proof}

\begin{lemma}\label{thm:main}
For any $T>1$, we have
\begin{align*}
&\expect{\mathcal{G}(\gamma_\alpha)} \leq \frac{2K}{\eta T} + \frac{B^2}{ V} + \frac{\eta LD^2}{2},\\
&\frac{\expect{\|\mathbf{Q}(T)\|^2}}{T^2}\leq 2MD \frac{ V}{T} + 4K\frac{V}{\eta T^2} + \frac{B^2}{T} + LD^2\frac{\eta V}{T}.
\end{align*}
\end{lemma}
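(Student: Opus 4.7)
The plan is to derive a single inequality whose LHS carries both the Frank--Wolfe gap $\mathcal{G}(\gamma_{t-1})$ and the quadratic queue drift, and whose RHS telescopes nicely in $t$. Both bounds in the lemma will then fall out by selectively dropping nonpositive terms and by invoking an elementary lower bound on $\mathcal{G}$.

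Concretely, I would start from the descent inequality \eqref{eq:smooth} applied with $x=\gamma_{t-1}$, $y=\gamma_t$, and use $\gamma_t-\gamma_{t-1}=\eta(x_t-\gamma_{t-1})$ together with $\|x_t-\gamma_{t-1}\|\le D$ to obtain
\[
-\dotp{\nabla f(\gamma_{t-1})}{x_t-\gamma_{t-1}}\ \leq\ \frac{f(\gamma_{t-1})-f(\gamma_t)}{\eta}+\frac{L\eta D^2}{2}.
\]
In parallel, I would take Lemma~\ref{lem:key-equation}, flip its sign, and take the supremum of the LHS over all feasible $\gamma^*\in\overline{\Gamma}^*$ with $\mathbf{A}\gamma^*\le\mathbf{b}$, which by definition \eqref{eq:FW-gap} produces $V\mathcal{G}(\gamma_{t-1})$ on the left (the supremum passes through because the RHS of \eqref{eq:key-equation} does not depend on $\gamma^*$). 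Combining these two displays yields, conditionally on $\mathcal{H}_t$,
\[
V\mathcal{G}(\gamma_{t-1})\ \leq\ \frac{V}{\eta}\,\expect{f(\gamma_{t-1})-f(\gamma_t)\mid\mathcal{H}_t}+\frac{VL\eta D^2}{2}-\expect{\dotp{\mathbf{Q}(t)}{\mathbf{A}x_t-\mathbf{b}}\mid\mathcal{H}_t}.
\]

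Next I would inject the drift identity already derived in the proof of Theorem~\ref{thm:convex-1}, namely $\Delta(t)\le \tfrac{B^2}{2}+\dotp{\mathbf{Q}(t)}{\mathbf{A}x_t-\mathbf{b}}$, to replace the last term by $\tfrac{B^2}{2}-\expect{\Delta(t)\mid\mathcal{H}_t}$. Taking full expectation, summing over $t\in\{0,\dots,T-1\}$, and dividing by $VT$ produces a telescoping of $f$ (bounded in magnitude by $2K/(\eta T)$ using \eqref{eq:K} and $\gamma_{-1}=0$) and a telescoping of the drift that leaves $\expect{\|\mathbf{Q}(T)\|^2}/(2VT)$ with a negative sign, since $\mathbf{Q}(0)=0$. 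This is exactly the master inequality
\[
\expect{\mathcal{G}(\gamma_\alpha)}+\frac{\expect{\|\mathbf{Q}(T)\|^2}}{2VT}\ \leq\ \frac{2K}{\eta T}+\frac{L\eta D^2}{2}+\frac{B^2}{2V},
\]
using the fact that $\alpha$ is uniform on $\{-1,0,\ldots,T-2\}$ so that $\tfrac{1}{T}\sum_{t=0}^{T-1}\expect{\mathcal{G}(\gamma_{t-1})}=\expect{\mathcal{G}(\gamma_\alpha)}$. Dropping the nonnegative queue term immediately gives the first bound of the lemma.

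For the queue bound I would reuse the same master inequality, but now move the gap term to the RHS via the trivial \emph{lower} bound $\mathcal{G}(\gamma_{t-1})\ge -MD$, obtained by picking any fixed feasible $v$ in \eqref{eq:FW-gap} and applying Cauchy--Schwarz with the $M$ bound \eqref{eq:M} and the $D$ bound \eqref{eq:D}. Substituting $\expect{\mathcal{G}(\gamma_\alpha)}\ge -MD$ and rearranging yields
\[
\frac{\expect{\|\mathbf{Q}(T)\|^2}}{T^2}\ \leq\ \frac{4KV}{\eta T^2}+\frac{L\eta D^2 V}{T}+\frac{B^2}{T}+\frac{2MDV}{T},
\]
which is the second claim. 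The main subtlety, and essentially the only nontrivial step, is the supremum-passing trick that converts the variational bound of Lemma~\ref{lem:key-equation} into a bound on $\mathcal{G}(\gamma_{t-1})$; everything else is bookkeeping that parallels the convex proof of Theorem~\ref{thm:convex-1}, with the key conceptual change being that convexity of $f$ (used there to replace the inner product by a function-value gap against $\gamma^*$) is now replaced by the descent lemma applied along the algorithm's own trajectory.
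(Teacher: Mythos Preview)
Your proposal is correct and follows essentially the same route as the paper's own proof: descent lemma along the trajectory, the drift bound $\Delta(t)\le \tfrac{B^2}{2}+\dotp{\mathbf Q(t)}{\mathbf Ax_t-\mathbf b}$, Lemma~\ref{lem:key-equation} followed by a supremum over feasible $\gamma^*$ to produce $\mathcal G(\gamma_{t-1})$, and then telescoping. The only cosmetic differences are that you package everything into a single ``master inequality'' and sum over $t\in\{0,\dots,T-1\}$ (so the queue telescope starts at $\mathbf Q(0)=0$), which actually yields the slightly sharper constant $B^2/(2V)$ in the gap bound versus the paper's $B^2/V$; and for the queue bound you correctly invoke the lower bound $\mathcal G(\gamma_{t-1})\ge -MD$, which is the right direction after your rearrangement.
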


\begin{proof}[Proof of Theorem \ref{thm:main}]
Recall that $\Delta(t) = \l(\|\mathbf{Q}(t+1)\|^2-\|\mathbf{Q}(t)\|^2\r)/2,~\forall t\in \{0,1,2, \ldots\}$. By the smoothness property of function $f(\cdot)$, we have for any $t\geq1$,
\begin{align}
Vf(\gamma_t)+ \eta\Delta(t) \leq& Vf(\gamma_{t-1})+V\dotp{\nabla f(\gamma_{t-1})}{\gamma_t-\gamma_{t-1}}+\frac{VL}{2}\|\gamma_t-\gamma_{t-1}\|^2\nonumber+ \eta\Delta(t)\\
=& Vf(\gamma_{t-1}) + V\eta\dotp{\nabla f(\gamma_{t-1})}{x_t-\gamma_{t-1}} +\frac{VL\eta^2}{2}\|x_t-\gamma_{t-1}\|^2+ \eta\Delta(t)\nonumber\\
\leq&V f(\gamma_{t-1}) + V\eta\dotp{\nabla f(\gamma_{t-1})}{x_t-\gamma_{t-1}} +\frac{VL\eta^2D^2}{2}+ \eta\Delta(t),\label{eq-1}
\end{align}
where the equality follows from the updating rule that $\gamma_t = (1-\eta)\gamma_{t-1}+\eta x_t$. On the other hand, we have the following bound on the drift $\Delta(t)$,
\begin{multline*}
\Delta(t) = \frac12\l( \|\mathbf Q(t+1)\|^2 - \|\mathbf Q(t)\|^2 \r)
\leq\frac12\l( \sum_{i=1}^N\l(Q_i(t) + \dotp{\mathbf{a}_i}{x_t} - b_i \r)^2 - \sum_{i=1}^NQ_i(t)^2\r)\\
= \frac12\sum_{i=1}^N\l( \dotp{\mathbf{a}_i}{x_t} - b_i \r)^2 + \sum_{i=1}^NQ_i(t)\l( \dotp{\mathbf{a}_i}{x_t} - b_i \r)
\leq  \frac{B^2}{2} + \dotp{\mathbf Q(t)}{\mathbf{A}x_t-\mathbf{b}}.
\end{multline*}

Substituting this bound into \eqref{eq-1} gives
\begin{multline}\label{eq:unconditional}
Vf(\gamma_t)+ \eta\Delta(t) \leq
V f(\gamma_{t-1}) +
V\eta\dotp{\nabla f(\gamma_{t-1})}{x_t-\gamma_{t-1}} +\eta \dotp{\mathbf Q(t)}{\mathbf{A}x_t-\mathbf{b}}\\
+\frac{VL\eta^2D^2}{2} + \frac{\eta B^2}{2}
\end{multline}

Taking conditional expectation from both sides of  \eqref{eq:unconditional} conditioned on $\mathcal{H}_t$, we have
\begin{multline*}
V\expect{f(\gamma_t)|\mathcal{H}_t}+ \eta\expect{\Delta(t)|\mathcal{H}_t} \\
\leq V f(\gamma_{t-1}) +
V\eta\expect{\dotp{\nabla f(\gamma_{t-1})}{x_t-\gamma_{t-1}} | \mathcal{H}_t} +\eta \expect{\dotp{\mathbf Q(t)}{\mathbf{A}x_t-\mathbf{b}}|\mathcal{H}_t}
+\frac{VL\eta^2D^2}{2} + \frac{\eta B^2}{2}.
\end{multline*}
Substituting Lemma \ref{lem:key-equation} into the right hand side gives
\begin{align*}
V\expect{f(\gamma_t)|\mathcal{H}_t}+ \eta\expect{\Delta(t)|\mathcal{H}_t} 
\leq V f(\gamma_{t-1}) +
V\eta\dotp{\nabla f(\gamma_{t-1})}{ \gamma^*-\gamma_{t-1}}
+\frac{VL\eta^2D^2}{2} + \frac{\eta B^2}{2},
\end{align*}
where $\gamma^*$ is any point satisfying  $\gamma\in\overline{\Gamma}^*$ and $\mathbf{A}\gamma\leq \mathbf{b}$. Rearranging the terms gives
\[
\dotp{\nabla f(\gamma_{t-1})}{ \gamma_{t-1}-\gamma^*}\leq \frac{ f(\gamma_{t-1}) - \expect{f(\gamma_t)|\mathcal{H}_t} }{\eta} - 
\frac{\expect{\Delta(t)|\mathcal{H}_t} }{V} +\frac{\eta LD^2}{2} + \frac{B^2}{2 V}
\]
Since this holds for any $\gamma^*$ satisfying $\gamma\in\overline{\Gamma}^*$ and $\mathbf{A}\gamma\leq \mathbf{b}$, we can take the supremum on the left-hand-side over all feasible points and the inequality still holds. This implies the gap function on the point $\gamma_{t-1}$ satisfies,
\begin{multline}\label{eq:dpp-bound-1}
\mathcal{G}(\gamma_{t-1}) = \sup_{\gamma\in\overline{\Gamma}^*,~\mathbf{A}\gamma\leq\mathbf{b}}\dotp{\nabla f(\gamma_{t-1})}{\gamma_{t-1} - \gamma}
\leq  \frac{ f(\gamma_{t-1}) - \expect{f(\gamma_t)|\mathcal{H}_t} }{\eta} - 
\frac{\expect{\Delta(t)|\mathcal{H}_t} }{ V} +\frac{\eta LD^2}{2} + \frac{B^2}{2 V}.
\end{multline}
Now, take the full expectation on both sides and sum over $t \in \{1, \ldots, T\}$  to obtain
\begin{align*}
\frac1T\sum_{t=0}^{T-1}\expect{\mathcal{G}(\gamma_{t})} \leq &
\frac{1}{T\eta}(  \expect{f(\gamma_{0})} - \expect{f(\gamma_T)} ) - \frac{\expect{\|\mathbf{Q}(T+1)\|^2 - \|\mathbf{Q}(1)\|^2}}{2 VT}+\frac{D^2L\eta}{2} + \frac{B^2}{2V}\\
\leq& \frac{2K}{\eta T} + \frac{B^2}{ V} + \frac{\eta LD^2}{2},
\end{align*}
where the second inequality follows from $\|\mathbf Q(1)\| = \|\mathbf{A}x_0 - \mathbf{b}\|\leq B$ and $T\geq 1$. 
Finally, since $\alpha$ is a uniform random variable on $\{0,1,2,\cdots,T-1\}$, independent of any other random events in the system, it follows 
$\expect{\mathcal{G}(\gamma_\alpha)}$ satisfies the same bound.

To get the $\mathbf{Q}(T)$ bound, we rearrange the terms in \eqref{eq:dpp-bound-1} and take full expectations, which yields
\begin{align*}
\expect{\|\mathbf{Q}(t+1)\|^2 - \|\mathbf{Q}(t)\|^2} \leq& 2 V \expect{\mathcal{G}(\gamma_{t-1})} + \frac{2V}{\eta}\left(\expect{f(\gamma_{t-1})} - \expect{f(\gamma_t)}\right) 
+ B^2 + \eta VLD^2\\
\leq& 2 VMD +\frac{2V}{\eta}\left(\expect{f(\gamma_{t-1})} - \expect{f(\gamma_t)}\right) + B^2 + \eta VLD^2.
\end{align*}
Summing both sides over $t \in \{0, \ldots, T-1\}$  and using $\mathbf{Q}(0)=0$ gives
\begin{align*}
\expect{\|\mathbf{Q}(T)\|^2}\leq &
2 VTMD +\frac{2V}{\eta}\left(\expect{f(\gamma_{-1})} - \expect{f(\gamma_{T-1})}\right) + B^2T + \eta VTLD^2\\
\leq& 2 VTMD +\frac{4VK}{\eta} + B^2T + \eta VTLD^2.
\end{align*}
Dividing both sides by  $T^2$  gives the $\mathbf{Q}(T)$ bound.
\end{proof}

\begin{theorem}\label{col:non-convex}
Let $V=T^{1/3}$ and $\eta=\frac{1}{T^{2/3}}$, then, we have objective and constraint violation bounds as follows,
\begin{align*}
&\expect{\mathcal{G}(\gamma_\alpha)} \leq \l( 2K + B^2 + \frac{LD^2}{2 T^{1/3}}\r)\frac{1}{T^{1/3}},\\
&\dotp{\mathbf{a}_i}{\expect{\gamma_\alpha}} - b_i \leq \left(\sqrt{2MD+\frac{4K+B^2}{T^{1/3}} + \frac{LD^2}{T^{2/3}} }+ \|\mathbf{a}_i\|D\right)\frac{1}{T^{1/3}},~\forall i\in\{1,2,\cdots,N\}.\\
&\expect{\text{dist}\l(\gamma_\alpha, \overline{\Gamma}^*\r)^2}\leq \frac{D^2}{T^{2/3}},
\end{align*}
Thus, to achieve an $\varepsilon$-near local optimality, the convergence time is $\mathcal{O}(1/\varepsilon^3)$.
\end{theorem}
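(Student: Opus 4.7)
The plan is to obtain all three bounds by direct substitution of $V=T^{1/3}$ and $\eta=T^{-2/3}$ into the machinery already assembled. The Frank--Wolfe gap bound and the distance bound are immediate: applying Lemma~\ref{thm:main} gives $\expect{\mathcal{G}(\gamma_\alpha)} \leq 2K/(\eta T) + B^2/V + \eta L D^2/2$, and plugging in $1/(\eta T) = 1/T^{1/3}$, $1/V = 1/T^{1/3}$, $\eta = 1/T^{2/3}$ yields the stated $(2K + B^2 + LD^2/(2T^{1/3}))/T^{1/3}$ form. Similarly Lemma~\ref{lem:path-average} gives $\expect{\text{dist}(\gamma_\alpha,\overline\Gamma^*)^2} \leq \eta D^2 = D^2/T^{2/3}$.

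The only step that is not a pure one-line substitution is the constraint violation, and this is where I would spend most of the work. The subtlety is that Lemma~\ref{lem:q-constraint} controls $\dotp{\mathbf a_i}{\expect{\overline x_T}} - b_i$, whereas the output here is $\gamma_\alpha$, so I need to bridge $\expect{\gamma_\alpha}$ and $\expect{\overline x_T}$. Since $\gamma_\alpha$ is uniform on $\{\gamma_{-1},\ldots,\gamma_{T-2}\}$, we have $\expect{\gamma_\alpha} = \frac{1}{T}\sum_{t=-1}^{T-2}\expect{\gamma_t}$, and Lemma~\ref{lem:diff} then yields
\[
\expect{\gamma_\alpha} = \expect{\overline x_T} - \frac{\expect{\gamma_{T-1}}}{\eta T}.
\]
Taking inner product with $\mathbf a_i$, subtracting $b_i$, and using Lemma~\ref{lem:q-constraint} together with Cauchy--Schwarz and the diameter bound $\|\gamma_{T-1}\|\leq D$ gives
\[
\dotp{\mathbf a_i}{\expect{\gamma_\alpha}} - b_i \leq \frac{\expect{Q_i(T)}}{T} + \frac{\|\mathbf a_i\|D}{\eta T} \leq \frac{\sqrt{\expect{\|\mathbf Q(T)\|^2}}}{T} + \frac{\|\mathbf a_i\|D}{\eta T},
\]
where the last step uses Jensen's inequality on $\expect{Q_i(T)} \leq \expect{\|\mathbf Q(T)\|}$.

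To finish, I would substitute the queue bound from Lemma~\ref{thm:main} into $\sqrt{\expect{\|\mathbf Q(T)\|^2}}/T$; with the chosen $V,\eta$ the four terms inside the square root evaluate to $2MD/T^{2/3}$, $4K/T$, $B^2/T$, and $LD^2/T^{4/3}$, so that factoring out $1/T^{1/3}$ yields exactly $\frac{1}{T^{1/3}}\sqrt{2MD + (4K+B^2)/T^{1/3} + LD^2/T^{2/3}}$. The second term contributes $\|\mathbf a_i\|D/T^{1/3}$, and adding them gives the claimed bound. The final statement about $\mathcal{O}(1/\varepsilon^3)$ convergence time follows by observing that all three leading rates (FW gap, constraint, and distance) decay like $T^{-1/3}$, so achieving $\varepsilon$-near local optimality requires $T = \Theta(1/\varepsilon^3)$.

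No step is genuinely difficult; the mild obstacle is simply bookkeeping the $\gamma_\alpha$-vs-$\overline x_T$ gap, which is cleanly resolved by Lemma~\ref{lem:diff}.
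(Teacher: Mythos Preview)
Your proposal is correct and follows essentially the same route as the paper: the gap and distance bounds are direct substitutions into Lemma~\ref{thm:main} and Lemma~\ref{lem:path-average}, and the constraint bound is obtained by combining the queue bound of Lemma~\ref{thm:main} with Lemma~\ref{lem:q-constraint} for $\overline x_T$ and then bridging to $\expect{\gamma_\alpha}=\frac1T\sum_{t=0}^{T-1}\expect{\gamma_{t-1}}$ via Lemma~\ref{lem:diff}. One minor slip in your closing remark: the squared-distance bound actually decays like $T^{-2/3}$, not $T^{-1/3}$, but since this is faster than the other two rates it does not affect the $\mathcal O(1/\varepsilon^3)$ conclusion.
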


Before giving the proof, it is useful to present the following  straight-forward corollary that applies the above theorem 
to the special case of deterministic problems where $\mathcal{X}_{S[t]} = \overline{\Gamma}^*,~\forall t$, i.e. $\mathcal{X}_{S[t]}$ is fixed. In this scenario, the only randomness comes from the algorithm of choosing the final solution, and it is obvious that $\gamma_t\in\overline{\Gamma}^*,~\forall t=-1,0,1,\cdots,T$. As a consequence, if we choose any $\gamma_t$ as a solution, then, $\text{dist}\l(\gamma_t, \overline{\Gamma}^*\r) =0$ and we do not have to worry about the infeasibility with respect to $\overline{\Gamma}^*$ as we do for the stochastic case.

\begin{corollary}[Deterministic programs]\label{cor:determine}
Suppose $\mathcal{X}_{S[t]} = \overline{\Gamma}^*,~\forall t$, i.e. $\mathcal{X}_{S[t]}$ is fixed, then, choose $V=T^{1/3}$, $\eta=\frac{1}{T^{2/3}}$,
\begin{align*}
&\expect{\mathcal{G}(\gamma_{\alpha})} \leq \l( 2K + B^2 + \frac{LD^2}{2T^{1/3}}\r)\frac{1}{T^{1/3}},\\
&\dotp{\mathbf{a}_i}{\expect{\gamma_{\alpha}}} - b_i \leq \left(\sqrt{2MD+\frac{4K + B^2}{T^{1/3}} + \frac{LD^2}{T^{2/3}} }+ \|\mathbf{a}_i\|D\right)\frac{1}{T^{1/3}},~\forall i\in\{1,2,\cdots,N\}.
\end{align*}
\end{corollary}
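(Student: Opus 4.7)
The plan is to read this corollary as the specialization of Theorem \ref{col:non-convex} to the deterministic setting in which the per-slot decision set $\mathcal{X}_{S[t]}$ is fixed and equal to $\overline{\Gamma}^*$. The two displayed inequalities are syntactically identical to the Frank-Wolfe gap and constraint-violation bounds in Theorem \ref{col:non-convex} with the same parameter choices $V = T^{1/3}$ and $\eta = 1/T^{2/3}$, so rather than redo the computation I would argue that every ingredient of the proof of Theorem \ref{col:non-convex}, namely Lemma \ref{thm:main}, Lemma \ref{lem:diff}, and Lemma \ref{lem:q-constraint}, continues to apply in the present setting. The only thing I would pause to double-check is Lemma \ref{lem:key-equation}, which requires that every $\gamma \in \overline{\Gamma}^*$ satisfying $\mathbf{A}\gamma \leq \mathbf{b}$ be realizable as the one-shot expectation of some randomized stationary policy $v_t$; when $\mathcal{X}_{S[t]} = \overline{\Gamma}^*$, the constant policy $v_t \equiv \gamma$ is admissible and has expectation $\gamma$, so Lemma \ref{lem:key-equation} remains valid.

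The only substantive observation beyond Theorem \ref{col:non-convex} is that the third conclusion there, the mean-square distance bound $\expect{\text{dist}(\gamma_\alpha, \overline{\Gamma}^*)^2} \leq D^2/T^{2/3}$, becomes vacuous in the deterministic case and is therefore dropped from the statement. To see this I would carry out a one-line induction on $t \in \{-1, 0, 1, \ldots, T-1\}$ showing $\gamma_t \in \overline{\Gamma}^*$: the base case holds because $\gamma_{-1} = 0 \in \overline{\Gamma}^*$ by assumption, and the inductive step uses convexity of $\overline{\Gamma}^*$ together with $x_t \in \mathcal{X}_{S[t]} = \overline{\Gamma}^*$, $\eta \in (0,1)$, and the update $\gamma_t = (1-\eta)\gamma_{t-1} + \eta x_t$. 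In particular $\gamma_\alpha \in \overline{\Gamma}^*$ almost surely.

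Given these two observations, the proof reduces to quoting the Frank-Wolfe gap bound of Lemma \ref{thm:main} for the first inequality, and combining the $\mathbf{Q}(T)$-bound of Lemma \ref{thm:main} with Lemma \ref{lem:diff} and Lemma \ref{lem:q-constraint} exactly as in the proof of Theorem \ref{col:non-convex} for the second, substituting the prescribed values of $V$ and $\eta$ to match the stated constants. There is no real obstacle in this argument; it is essentially bookkeeping, and the only point that merits a sentence of justification is the recognition that the i.i.d.\ randomness of $S[t]$ played no essential role in deriving the first two bounds of Theorem \ref{col:non-convex}, while feasibility with respect to $\overline{\Gamma}^*$ is now automatic by convexity.
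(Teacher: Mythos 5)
Your proposal is correct and matches the paper's treatment: the paper likewise presents this corollary as a direct specialization of Theorem \ref{col:non-convex} (hence of Lemmas \ref{thm:main}, \ref{lem:diff}, and \ref{lem:q-constraint}) with $V=T^{1/3}$, $\eta = 1/T^{2/3}$, and drops the distance bound precisely because, when $\mathcal{X}_{S[t]} = \overline{\Gamma}^*$, convexity of $\overline{\Gamma}^*$ and the update $\gamma_t = (1-\eta)\gamma_{t-1} + \eta x_t$ give $\gamma_t \in \overline{\Gamma}^*$ for all $t$, so $\text{dist}\l(\gamma_\alpha, \overline{\Gamma}^*\r) = 0$. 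Your extra verification that Lemma \ref{lem:key-equation} still applies (via the constant policy $v_t \equiv \gamma$) is sound, though strictly unnecessary since that lemma was already proved for arbitrary $\mathcal{X}_{S[t]}$.
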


\begin{proof}[Proof of Theorem \ref{col:non-convex}]
The $\expect{\mathcal{G}(\gamma_\alpha)}$ and $\expect{\text{dist}\l(\gamma_\alpha, \overline{\Gamma}^*\r)^2}$ bounds 
follow directly from Lemma \ref{thm:main} and Lemma \ref{lem:path-average}. To get the constraint violation bound, we take the square root from both sides of the 
$\|\mathbf{Q}(T)\|^2$ bound, which gives
\[
\frac{\expect{\|\mathbf{Q}(T)\|}}{T}\leq \frac{\sqrt{2MD+\frac{4K + B^2}{T^{1/3}}+ \frac{LD^2}{T^{2/3}} }}{T^{1/3}}
\]
By Lemma \ref{lem:q-constraint}, we have


\begin{equation} \label{eq:quick-equation} 
\dotp{\mathbf{a}_i}{\expect{\overline{x}_T}} - b_i \leq
 \frac{\sqrt{2MD+\frac{4K + B^2}{T^{1/3}}+ \frac{LD^2}{T^{2/3}} }}{T^{1/3}},~\forall i=1,2,\cdots,N.
\end{equation} 
By Lemma \ref{lem:diff},
\begin{multline*}
\l|\dotp{\mathbf{a}_i}{\expect{\frac1T\sum_{t= 0}^{T-1}{\gamma}_{t-1}}} - \dotp{\mathbf{a}_i}{\expect{\overline{x}_T}}\r|
\leq \|\mathbf{a}_i\|  \expect{\l\|\frac1T\sum_{t= 0}^{T-1}{\gamma}_{t-1} - \overline x_T\r\|}
\leq \frac{\|\mathbf{a}_i\|D}{\eta T} = \frac{\|\mathbf{a}_i\|D}{ T^{1/3}}
\end{multline*}
which implies 
$$
\dotp{\mathbf{a}_i}{\expect{\gamma_\alpha}} - b_i
 = \dotp{\mathbf{a}_i}{\expect{\frac1T\sum_{t= 0}^{T-1}{\gamma}_{t-1}}} - b_i
 \leq  \dotp{\mathbf{a}_i}{\expect{\overline{x}_T}} - b_i + \frac{\|\mathbf{a}_i\|D}{ T^{1/3}}
$$
which, when combined with \eqref{eq:quick-equation}, 
finishes the proof.
\end{proof}

\subsection{An improved convergence time bound via Slater's condition}


In this section, we show that a better convergence time bound is achievable when the Slater's condition hold. Specifically, we have the following assumption:
\begin{assumption}[Slater's condition]\label{ass:slater}
There exists a randomized stationary policy of choosing $\widetilde{ x}_t$ from $\mathcal{X}_{S[t]}$ at each time slot $t$, such that
\[
\dotp{\mf a_i}{\expect{\widetilde{x}_t}} - b_i \leq -\varepsilon,
\] 
for some fixed constant $\varepsilon>0$ and any $i\in\l\{ 1,2,\cdots,N \r\}$.
\end{assumption}
We have the following convergence time bound:
\begin{theorem}\label{thm:slater}
Suppose Assumption \ref{ass:slater} holds, then, for any $T\geq1$, choosing $\eta = 1/\sqrt{T}$ and $V=\sqrt T$,
\begin{align*}
&\expect{\mathcal{G}(\gamma_\alpha)}\leq (2K + B^2 + LD^2)\frac{1}{\sqrt{T}},\\
&\dotp{\mf a_i}{\expect{\gamma_\alpha}} - b_i\leq \frac{C_i}{\sqrt T},~~\forall i\\
&\expect{\text{dist}\l(\gamma_\alpha, \overline{\Gamma}^*\r)^2}\leq \frac{D^2}{\sqrt{T}}.
\end{align*}
where 
\[
C_i =  \frac{B^2+LD^2+2MD}{\varepsilon} + B+4K+\varepsilon + \frac{8B^2}{\varepsilon}
 \cdot \ln\l( 1+ \frac{32B^2}{\varepsilon^2}e \r) + \|\mf a_i\|D .
\]
Thus, to achieve an $\varepsilon$-near local optimality, the convergence time is $\mathcal{O}(1/\varepsilon^2)$.
\end{theorem}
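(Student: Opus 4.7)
The plan is to observe that only the constraint-violation bound requires fundamentally new work; the Frank-Wolfe gap bound and the distance bound already follow by directly substituting $\eta = 1/\sqrt{T}$ and $V = \sqrt{T}$ into the corresponding bounds from Lemma \ref{thm:main} and Lemma \ref{lem:path-average}. For the gap bound, $2K/(\eta T) + B^2/V + \eta LD^2/2 = (2K+B^2)/\sqrt{T} + LD^2/(2\sqrt{T}) \le (2K+B^2+LD^2)/\sqrt{T}$, and the distance bound is $\eta D^2 = D^2/\sqrt{T}$.

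The crux is establishing $\mathbb{E}\|\mathbf{Q}(T)\| = O(\sqrt{T})$, because the generic virtual-queue estimate in Lemma \ref{thm:main} only gives $\mathbb{E}\|\mathbf{Q}(T)\|/T = O(T^{-1/4})$ with these parameters, which is too weak. To improve it, I would plug the Slater policy $\widetilde{x}_t$ from Assumption \ref{ass:slater} into the slot-$t$ optimality inequality for $x_t$: since $x_t$ minimizes the weighted linear objective over $\mathcal{X}_{S[t]}$,
\[
V\langle \nabla f(\gamma_{t-1}),x_t-\gamma_{t-1}\rangle + \langle \mathbf{Q}(t),\mathbf{A}x_t-\mathbf{b}\rangle \le V\langle \nabla f(\gamma_{t-1}),\widetilde{x}_t-\gamma_{t-1}\rangle + \langle \mathbf{Q}(t),\mathbf{A}\widetilde{x}_t-\mathbf{b}\rangle.
\]
Taking conditional expectation given $\mathcal{H}_t$, using independence of $\widetilde{x}_t$ from $\mathcal{H}_t$, the Slater slack $\langle \mathbf{a}_i,\mathbb{E}\widetilde{x}_t\rangle - b_i \le -\varepsilon$, and the bounds $\|\nabla f\|\le M$, $\|x-y\|\le D$, one obtains
\[
\mathbb{E}[\langle \mathbf{Q}(t),\mathbf{A}x_t-\mathbf{b}\rangle \mid \mathcal{H}_t] \le 2VMD - \varepsilon \|\mathbf{Q}(t)\|.
\]
Combined with the standard drift estimate $\mathbb{E}[\Delta(t)\mid\mathcal{H}_t] \le B^2/2 + \mathbb{E}[\langle\mathbf{Q}(t),\mathbf{A}x_t-\mathbf{b}\rangle\mid\mathcal{H}_t]$, this yields a scalar drift inequality for $W(t):=\|\mathbf{Q}(t)\|$ of the form: whenever $W(t) \ge \theta := (B^2/2 + 2VMD + \delta)/\varepsilon$ for a suitable $\delta$, the conditional drift of $W(t)^2$ is strictly negative; meanwhile $|W(t+1)-W(t)| \le \|\mathbf{A}x_t-\mathbf{b}\| \le B$ almost surely.

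The main obstacle is to convert this "drift toward a threshold, bounded increments" property into a sharp bound on $\mathbb{E}\|\mathbf{Q}(T)\|$ matching the constant $C_i$ in the statement. I would appeal to a Hajek-type exponential Lyapunov bound (the form of $C_i$, with the logarithmic factor $\frac{8B^2}{\varepsilon}\ln(1+32B^2 e/\varepsilon^2)$, is exactly the signature of such an argument): choose $\theta^\star>0$ small enough that $\mathbb{E}[e^{\theta^\star(W(t+1)-W(t))}\mid \mathcal{H}_t,\ W(t)\ge \theta]\le e^{-\rho}$ for some $\rho>0$, use Markov's inequality on $e^{\theta^\star W(t)}$ to obtain uniform geometric tail bounds, and integrate the tail to get $\mathbb{E}\|\mathbf{Q}(t)\| \le \theta + O(B^2/\varepsilon)\ln(1 + O(B^2/\varepsilon^2))$ for all $t\ge 0$. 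Substituting $V=\sqrt{T}$ gives $\theta = O(\sqrt{T})$, hence $\mathbb{E}\|\mathbf{Q}(T)\| \le C\sqrt{T}$ for a $T$-independent constant matching $C_i$ minus the $\|\mathbf{a}_i\|D$ piece.

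Finally, I would close the argument in parallel to the proof of Theorem \ref{col:non-convex}: Lemma \ref{lem:q-constraint} converts $\mathbb{E}\|\mathbf{Q}(T)\|/T$ into a bound on $\langle \mathbf{a}_i,\mathbb{E}\overline{x}_T\rangle - b_i$, and Lemma \ref{lem:diff} (yielding $\|\overline{x}_T - \tfrac{1}{T}\sum_t\gamma_{t-1}\|\le D/(\eta T) = D/\sqrt{T}$ under the present parameter choice) bridges from $\overline{x}_T$ to $\mathbb{E}\gamma_\alpha = \tfrac{1}{T}\sum_{t=-1}^{T-2}\mathbb{E}\gamma_t$, producing the extra $\|\mathbf{a}_i\|D/\sqrt{T}$ term and thus the full constant $C_i$.
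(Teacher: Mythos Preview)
Your overall strategy is correct and would yield the $O(1/\sqrt{T})$ rates claimed, but it diverges from the paper's argument in a substantive way, and as a result does not reproduce the specific constant $C_i$.

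The paper does not bound $\mathbb{E}[\langle\mathbf{Q}(t),\mathbf{A}x_t-\mathbf{b}\rangle\mid\mathcal{H}_t]$ directly. Instead it starts from the descent-lemma inequality \eqref{eq:unconditional}, which carries the term $\tfrac{2V}{\eta}(f(\gamma_{t-1})-f(\gamma_t))$. Because this term is $O(VK/\eta)=O(T)$ per slot, the paper cannot use a single-step drift; it telescopes over a window of length $t_0$ (Lemma~\ref{lem:drift-2}), obtaining a threshold $\lambda$ containing the pieces $4VK/(\eta t_0)$, $VL\eta D^2$, $\varepsilon B t_0$, and $\varepsilon^2 t_0$. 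Only after substituting $t_0=\sqrt{T}$ do these collapse to the $\sqrt{T}$-order threshold, and this is precisely where the terms $4K$, $LD^2$, $B$, and $\varepsilon$ in $C_i$ come from. The paper then invokes the multi-step drift lemma (Lemma~\ref{lem:drift}) with $t_0=\sqrt{T}$, $\xi=\varepsilon/2$, $\delta_{\max}=B$.

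Your route---bounding the queue drift directly via $|\langle\nabla f(\gamma_{t-1}),x_t-\gamma_{t-1}\rangle|\le MD$ rather than through the descent lemma---is cleaner and avoids the multi-step machinery altogether: it yields a single-step threshold $\theta\approx(B^2+4VMD)/(2\varepsilon)=O(\sqrt{T})$ and then the standard Hajek tail. This is a genuine simplification. However, your resulting constant in front of $1/\sqrt{T}$ will be (roughly) $\tfrac{4MD}{\varepsilon}$ plus the Hajek logarithmic correction plus $\|\mathbf{a}_i\|D$, which is \emph{not} the $C_i$ displayed in the statement. So your claim that the constant ``match[es] $C_i$ minus the $\|\mathbf{a}_i\|D$ piece'' is inaccurate; you recover the correct order but a different (in fact smaller) explicit constant. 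If the goal is to prove the theorem exactly as stated, you must follow the multi-step route; if the goal is merely the $O(1/\sqrt{T})$ conclusion, your argument is preferable.
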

Thus, under Slater's condition, we can choose $\eta, V$ to get a better trade-off on the objective suboptimality and constraint violation bounds compared to Theorem \ref{col:non-convex} (improve the rate from $1/\varepsilon^{3}$ to $1/\varepsilon^2$). Furthermore, as a straight-forward corollary, we readily obtain an improvement in the deterministic scenario compared to Corollary \ref{cor:determine}.
\begin{corollary}[Deterministic programs]\label{cor:determine-2}
Suppose $\mathcal{X}_{S[t]} = \overline{\Gamma}^*,~\forall t$, i.e. $\mathcal{X}_{S[t]}$ is fixed, then, choose $V=\sqrt T$, $\eta=\frac{1}{\sqrt T}$,
\begin{align*}
&\expect{\mathcal{G}(\gamma_{\alpha})} \leq \l( 2K + B^2 + \frac{LD^2}{2}\r)\frac{1}{\sqrt T},\\
&\dotp{\mathbf{a}_i}{\expect{\gamma_{\alpha}}} - b_i \leq \frac{C_i}{\sqrt T},~\forall i\in\{1,2,\cdots,N\},
\end{align*}
where 
\[
C_i =  \frac{B^2+LD^2+2MD}{\varepsilon} + B+4K+\varepsilon + \frac{8B^2}{\varepsilon}
 \cdot \ln\l( 1+ \frac{32B^2}{\varepsilon^2}e \r) + \|\mf a_i\|D.
\]
\end{corollary}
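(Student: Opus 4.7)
The plan is to decompose the theorem into its three assertions and treat them in order, with essentially all of the work concentrated on the constraint violation bound. The gap bound $\expect{\mathcal{G}(\gamma_\alpha)} \leq (2K + B^2 + LD^2)/\sqrt{T}$ follows by substituting $V = \sqrt{T}$ and $\eta = 1/\sqrt{T}$ into the first inequality of Lemma \ref{thm:main}, which produces $2K/\sqrt{T} + B^2/\sqrt{T} + LD^2/(2\sqrt{T})$. The distance bound $\expect{\text{dist}(\gamma_\alpha, \overline{\Gamma}^*)^2} \leq D^2/\sqrt{T}$ is immediate from Lemma \ref{lem:path-average} with $\eta = 1/\sqrt{T}$. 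Neither of these uses Slater's condition.

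The third assertion cannot be obtained from the $\|\mathbf{Q}(T)\|^2$ bound in Lemma \ref{thm:main} alone: at $V = \sqrt{T}$, $\eta = 1/\sqrt{T}$, that bound only yields $\expect{\|\mathbf{Q}(T)\|}/T = O(T^{-1/4})$, which is weaker than the $O(T^{-1/2})$ target. To get a sharper estimate I will use Slater's condition to produce a pointwise negative drift on $\|\mathbf{Q}(t)\|$. Let $\widetilde{\gamma} = \expect{\widetilde{x}_t}$ denote the mean of the Slater policy of Assumption \ref{ass:slater}; this point lies in $\overline{\Gamma}^*$ and satisfies $\dotp{\mathbf{a}_i}{\widetilde{\gamma}} - b_i \leq -\varepsilon$ for every $i$. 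Repeating the derivation of Lemma \ref{lem:key-equation} with $v_t = \widetilde{x}_t$ but retaining (rather than dropping) the queue inner product against $\mathbf{A}\widetilde{\gamma} - \mathbf{b}$, and combining with the standard drift estimate $\Delta(t) \leq B^2/2 + \dotp{\mathbf{Q}(t)}{\mathbf{A}x_t - \mathbf{b}}$ used in the proofs of Lemma \ref{thm:main} and Theorem \ref{thm:convex-1}, yields the pointwise ``negative-drift-when-large'' inequality
\[
\expect{\Delta(t) \mid \mathcal{H}_t} \;\leq\; \tfrac{B^2}{2} + 2VMD - \varepsilon \|\mathbf{Q}(t)\|,
\]
supplemented by the deterministic one-step bound $\|\mathbf{Q}(t+1) - \mathbf{Q}(t)\| \leq B$ from \eqref{eq:B}.

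With this condition in hand, the next step is to invoke a Hajek-style exponential Lyapunov lemma on $\|\mathbf{Q}(t)\|$. Tracking constants, optimizing the Chernoff exponent against the jump bound $B$, and substituting $V = \sqrt{T}$ should produce a uniform bound of the form
\[
\expect{\|\mathbf{Q}(T)\|} \;\leq\; \left[\tfrac{B^2 + LD^2 + 2MD}{\varepsilon} + B + 4K + \varepsilon + \tfrac{8B^2}{\varepsilon}\ln\bigl(1 + \tfrac{32 B^2}{\varepsilon^2} e\bigr)\right]\sqrt{T},
\]
which is exactly $C_i\sqrt{T}$ minus the $\|\mathbf{a}_i\|D$ summand. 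Lemma \ref{lem:q-constraint} then converts the queue bound into $\dotp{\mathbf{a}_i}{\expect{\overline{x}_T}} - b_i \leq \expect{\|\mathbf{Q}(T)\|}/T$, and Lemma \ref{lem:diff} together with $\expect{\gamma_\alpha} = \tfrac{1}{T}\sum_{t=-1}^{T-2}\expect{\gamma_t}$ bridges $\overline{x}_T$ and $\expect{\gamma_\alpha}$ at a cost of $\|\mathbf{a}_i\|D/(\eta T) = \|\mathbf{a}_i\|D/\sqrt{T}$, supplying the final summand in $C_i$. The main obstacle is this Chernoff/drift step: producing the negative-drift inequality is routine once Slater is used, but translating it into the precise logarithmic form with constants $8B^2/\varepsilon$ and $32B^2 e/\varepsilon^2$ requires a careful exponential-moment computation (balancing the tail exponent against $B$) that is the technical core of the argument.
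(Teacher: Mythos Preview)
Your proposal is correct and follows the same overall route as the paper. In the paper this corollary is stated without proof as an immediate specialization of Theorem~\ref{thm:slater} to the deterministic case (where $\gamma_t\in\overline{\Gamma}^*$ automatically, so the distance assertion you include is superfluous and the corollary has only two claims, not three). The proof of Theorem~\ref{thm:slater} in turn proceeds exactly as you outline: the gap bound comes from Lemma~\ref{thm:main}, the queue bound $\expect{\|\mathbf{Q}(T)\|}=O(\sqrt{T})$ comes from a drift lemma under Slater's condition, and the conversion to a constraint bound on $\expect{\gamma_\alpha}$ uses Lemmas~\ref{lem:q-constraint} and~\ref{lem:diff}.

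The one technical difference is in how the drift hypothesis is verified. The paper (Lemma~\ref{lem:drift-2}) telescopes the full inequality \eqref{eq:unconditional} over $t_0$ steps, picking up the $f$-telescoping term $4VK/\eta$ and the smoothness term $VL\eta D^2$; this is why the constants $4K$ and $LD^2$ appear in $C_i$. It then invokes the multi-step drift lemma (Lemma~\ref{lem:drift}, cited from \cite{yu2017online}) with $t_0=\sqrt{T}$ rather than $t_0=1$. Your one-step inequality $\expect{\Delta(t)\mid\mathcal{H}_t}\leq B^2/2+2VMD-\varepsilon\|\mathbf{Q}(t)\|$ bounds the gradient inner product directly by $MD$ and avoids the $f$-telescoping altogether, so the resulting threshold $\lambda$ would \emph{not} contain the $4K$ and $LD^2$ summands---you would obtain a slightly tighter constant than the stated $C_i$, which of course still proves the corollary as written. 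The exponential-moment/Hajek computation you flag as the technical core is precisely the content of the cited Lemma~\ref{lem:drift}; the paper does not re-derive it.
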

It is also worth noting that such a rate matches the $1/\sqrt{T}$ Frank-Wolfe gap convergence rate established in the deterministic scenario without linear constraints (e.g. \cite{lacoste2016convergence}).

The key argument proving Theorem  \ref{thm:slater} is to derive a tight bound on the virtual queue term $\|\mf Q(t)\|$ via Slater's condition. Intuitively, having Slater's condition ensures the property that the virtual queue process $\|\mf Q(t)\|$ will strictly decrease whenever it gets large enough. Such a property is often referred to as the ``drift condition''. 
Any process satisfying such a drift condition enjoys a tight upper bound, as is shown in the following lemma:
\begin{lemma}[Lemma 5 of \cite{yu2017online}]\label{lem:drift}
Let $\l\{Z(t),t\in\mb N  \r\}$ be a discrete time stochastic process adapted to a filtration $\l\{\mathcal F_t,~t\in\mb N\r\}$ such that $Z(0) = 0$. If there exist an integer $t_0>0$ and real constants $\lambda,~\delta_{\max}>0$ and $0 < \xi <\delta_{\max}$ such that
\begin{align*}
&|Z(t+1) - Z(t)|\leq \delta_{\max},\\
&\expect{Z(t+t_0) - Z(t) | \mathcal{F}_t}\leq
\begin{cases}
t_0\delta_{\max},~~&\text{if}~Z(t)\leq \lambda,\\
-t_0\xi,~~&\text{if}~Z(t)>\lambda.
\end{cases}
\end{align*}
Then, we have for any $t\in\mb N$,
\[
\expect{Z(t)} \leq \lambda + \frac{4\delta_{\max}^2}{\xi}t_0\cdot\ln\l( 1+\frac{8\delta_{\max}^2}{\xi^2}e^{\xi/4\delta_{\max}} \r)
\]
\end{lemma}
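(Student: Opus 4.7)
The plan is to prove this bound via the classical exponential Lyapunov function technique for drift analysis (going back to Hajek 1982), which converts the negative drift outside the sub-level set $\{Z \leq \lambda\}$ into a geometric contraction for the exponential moment, and then extracts a mean bound via Jensen's inequality.

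First, I introduce the Lyapunov function $W(t) := \exp(rZ(t))$ for a parameter $r > 0$ to be chosen later, and aim to establish a block-level geometric-drift-plus-constant inequality of the form
\[
\expect{W(t+t_0)\mid \mathcal{F}_t} \leq \rho\, W(t) + C
\]
for suitable $\rho \in (0,1)$ and $C > 0$. Once this is in hand, iterating over $\lfloor t/t_0\rfloor$ blocks and using $W(0)=1$ yields a uniform bound $\expect{W(t)} \leq C/(1-\rho) + 1$, and Jensen's inequality gives $\expect{Z(t)} \leq (1/r)\log\expect{W(t)}$. Substituting the explicit values of $r$, $\rho$, and $C$ and simplifying will then produce the stated closed-form estimate.

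To establish the block-drift inequality I would apply the Taylor-remainder bound $e^{rx} \leq 1 + rx + (r^2 x^2/2)\,e^{r|x|}$ with $x = Z(t+t_0) - Z(t)$. The bounded-differences hypothesis gives $|x| \leq t_0 \delta_{\max}$, hence
\[
\expect{e^{rx}\mid\mathcal{F}_t} \leq 1 + r\,\expect{x\mid\mathcal{F}_t} + \tfrac{r^2 t_0^2 \delta_{\max}^2}{2}\,e^{r t_0 \delta_{\max}}.
\]
On $\{Z(t) > \lambda\}$ the drift hypothesis contributes $r\,\expect{x\mid\mathcal{F}_t} \leq -r t_0 \xi$; choosing $r := \xi/(4 t_0 \delta_{\max}^2)$ makes $r t_0 \delta_{\max}^2 = \xi/4$ and $r t_0 \delta_{\max} = \xi/(4\delta_{\max}) \leq 1/4$ (using $\xi < \delta_{\max}$), so the quadratic exponential-moment term is strictly dominated by the linear drift term, yielding $\expect{W(t+t_0)\mid\mathcal{F}_t} \leq \rho\, W(t)$ with $\rho \leq 1 - (3/4) r t_0 \xi < 1$. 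On the complementary set $\{Z(t) \leq \lambda\}$, the one-step bound iterated $t_0$ times gives the deterministic estimate $Z(t+t_0) \leq \lambda + t_0 \delta_{\max}$, hence $W(t+t_0) \leq e^{r(\lambda + t_0 \delta_{\max})}$, which is an absolute constant absorbed into $C$.

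The main obstacle is arranging the constants so that the final bound reproduces exactly the prescribed closed form $\lambda + (4\delta_{\max}^2 t_0/\xi)\log\bigl(1 + (8\delta_{\max}^2/\xi^2)\,e^{\xi/(4\delta_{\max})}\bigr)$. The choice $r = \xi/(4 t_0 \delta_{\max}^2)$ is exactly what makes $r t_0 \delta_{\max} = \xi/(4\delta_{\max})$ (producing the exponent inside the logarithm) and $1/r = 4 t_0 \delta_{\max}^2/\xi$ (producing the prefactor); the internal coefficient $8\delta_{\max}^2/\xi^2$ emerges from the ratio $C/(1-\rho)$ after the explicit substitutions. The only delicate bookkeeping is verifying that the assumption $\xi < \delta_{\max}$ keeps $r t_0 \delta_{\max} \leq 1/4$, so that $e^{r t_0 \delta_{\max}} \leq e^{1/4} < 2$ and the quadratic term is indeed a small fraction of the linear drift term; once that numerical estimate is pinned down, iterating the geometric drift and applying Jensen are routine.
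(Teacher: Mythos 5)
Your proposal is correct and follows essentially the same route as the source: the paper states this lemma without proof, citing Lemma 5 of \cite{yu2017online}, whose argument is precisely this Hajek-style exponential Lyapunov bound with $W(t)=e^{rZ(t)}$, $r=\xi/(4t_0\delta_{\max}^2)$, the Taylor estimate $e^u\leq 1+u+\frac{u^2}{2}e^{|u|}$, and block-wise iteration of the geometric-drift-plus-constant inequality. The only bookkeeping your sketch glosses over---starting blocks at offsets $s\in\{0,\dots,t_0-1\}$, where $W(s)\leq e^{rt_0\delta_{\max}}$ rather than $W(0)=1$, and handling the case $\lambda<t_0\delta_{\max}$ when factoring $e^{r\lambda}$ out of the logarithm---works out with room to spare, since $rt_0\delta_{\max}\leq 1/4$ makes every such term absorbable into the stated constants (e.g., $C/(1-\rho)$ contributes $\frac{16}{3}\frac{\delta_{\max}^2}{\xi^2}e^{\xi/4\delta_{\max}}$, which together with the residual $e^{1/4}$ is dominated by $1+\frac{8\delta_{\max}^2}{\xi^2}e^{\xi/4\delta_{\max}}$).
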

Note that the special case $t_0= 1$ of this lemma appears in the earlier work \cite{wei2015probabilistic}, which is used to prove tight convergence time for the ``drift-plus-penalty'' algorithm.
Our goal here is to verify that the process $\|\mf Q(t)\|$ satisfies the assumption specified in Lemma \ref{lem:drift}, which is done in the following lemma. The proof is delayed to the appendix.
\begin{lemma}\label{lem:drift-2}
The process $\l\{\|\mf Q(t)\|\r\}_{t=0}^{\infty}$ satisfies the assumption in Lemma \ref{lem:drift} for any $t_0\geq1$, $\delta_{\max} = B$, $\xi = \varepsilon/2$ and 
$\lambda = \frac{(VL\eta D^2 + B^2+2VMD)t_0+4VK/\eta + \varepsilon Bt_0^2+\varepsilon^2t_0^2}{\varepsilon t_0}$.
\end{lemma}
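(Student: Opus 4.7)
The plan is to verify the two conditions of Lemma \ref{lem:drift} for $Z(t) = \|\mathbf{Q}(t)\|$. The bounded-increment condition with $\delta_{\max} = B$ is immediate: because the queue update is a non-expansive projection onto $[0,\infty)$ component-wise, $\|\mathbf{Q}(t+1) - \mathbf{Q}(t)\| \leq \bigl(\sum_i (\dotp{\mathbf{a}_i}{x_t} - b_i)^2\bigr)^{1/2} \leq B$ by \eqref{eq:B}, and the reverse triangle inequality gives $|Z(t+1) - Z(t)| \leq B$. Telescoping this sample-path bound already covers the ``good state'' case of the drift inequality, namely $\mathbb{E}[Z(t+t_0) - Z(t) \mid \mathcal{F}_t] \leq t_0 B$ whenever $\|\mathbf{Q}(t)\| \leq \lambda$.

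For the negative-drift case $\|\mathbf{Q}(t)\| > \lambda$, the idea is to strengthen the drift-plus-penalty inequality used in the proof of Theorem \ref{thm:main} by keeping the $\dotp{\mathbf{Q}(t)}{\mathbf{A}\gamma - \mathbf{b}}$ term from Lemma \ref{lem:key-equation} and evaluating it at the Slater point $\tilde{\gamma} = \mathbb{E}[\tilde{x}_t]$. Assumption \ref{ass:slater} yields $\dotp{\mathbf{Q}(t)}{\mathbf{A}\tilde{\gamma} - \mathbf{b}} \leq -\varepsilon \sum_i Q_i(t) \leq -\varepsilon\|\mathbf{Q}(t)\|$, using $\|\mathbf{Q}(t)\|_1 \geq \|\mathbf{Q}(t)\|_2$ for non-negative vectors. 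Combining this with the $L$-smoothness descent step from the proof of Theorem \ref{thm:main} and bounding the residual gradient inner product by $VMD$ via \eqref{eq:M} and \eqref{eq:D} produces
\[
\tfrac{\eta}{2}\mathbb{E}\bigl[\|\mathbf{Q}(t+1)\|^2 - \|\mathbf{Q}(t)\|^2 \mid \mathcal{H}_t\bigr] \leq V\bigl(f(\gamma_{t-1}) - \mathbb{E}[f(\gamma_t)\mid \mathcal{H}_t]\bigr) + V\eta MD + \tfrac{VL\eta^2 D^2}{2} + \tfrac{\eta B^2}{2} - \varepsilon\eta \|\mathbf{Q}(t)\|.
\]

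Next I would iterate this inequality over $\tau \in \{t,\ldots,t+t_0-1\}$, divide by $\eta$, and take conditional expectations given $\mathcal{F}_t$. The $f$ differences telescope and are bounded in absolute value by $2VK/\eta$ using \eqref{eq:K}, yielding a bound on the $\|\mathbf{Q}\|^2$ drift of the form ``explicit constants'' $- \varepsilon \sum_{\tau=t}^{t+t_0-1} \mathbb{E}[\|\mathbf{Q}(\tau)\|\mid\mathcal{F}_t]$. Two final steps complete the reduction: (i) the elementary identity $a^2 - b^2 \geq 2b(a-b)$ (valid for all $a,b \geq 0$) converts the quadratic drift into a linear drift on $\|\mathbf{Q}\|$ after dividing by $\|\mathbf{Q}(t)\|$; and (ii) the pathwise one-step bound gives $\sum_{\tau=t}^{t+t_0-1}\mathbb{E}[\|\mathbf{Q}(\tau)\|\mid\mathcal{F}_t] \geq t_0 \|\mathbf{Q}(t)\| - Bt_0^2/2$. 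Requiring the resulting right-hand side to be at most $-\tfrac{\varepsilon t_0}{2}\|\mathbf{Q}(t)\|$ identifies $\lambda$ as in the statement, with $\xi = \varepsilon/2$; the $\varepsilon B t_0^2$ and $\varepsilon^2 t_0^2$ pieces in the numerator of $\lambda$ absorb the $Bt_0^2/2$ slack from step (ii) and an additional safety buffer, respectively.

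The main obstacle I anticipate is the conversion from a quadratic drift (which the Lyapunov-style calculation naturally produces) to a linear drift (which Lemma \ref{lem:drift} requires). The bridge $a^2 - b^2 \geq 2b(a-b)$ is trivial in isolation, but exploiting it requires a uniform lower bound on $\|\mathbf{Q}(\tau)\|$ throughout the entire $t_0$-step window, and that is precisely what forces $\lambda$ to grow linearly in $t_0$ through the $Bt_0$ and $\varepsilon t_0$ contributions.
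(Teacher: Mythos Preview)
Your proposal is correct and tracks the paper's proof closely: both derive a one-step quadratic drift bound by comparing the algorithm's choice against the Slater stationary policy (giving the $VMD - \varepsilon\|\mathbf{Q}(\tau)\|$ term), telescope over $\tau\in\{t,\ldots,t+t_0-1\}$, bound the telescoped $f$-difference by $4VK/\eta$, and lower-bound each $\|\mathbf{Q}(\tau)\|$ by $\|\mathbf{Q}(t)\| - B(\tau-t)$ to produce the $\varepsilon B t_0^2$ term.

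The only substantive difference is the quadratic-to-linear conversion. The paper, having obtained $\mathbb{E}\bigl[\|\mathbf{Q}(t+t_0)\|^2\mid\mathcal{H}_t\bigr]\leq \|\mathbf{Q}(t)\|^2 - \varepsilon t_0\|\mathbf{Q}(t)\|$, completes the square to $(\|\mathbf{Q}(t)\|-\varepsilon t_0/2)^2$ and applies Jensen's inequality to the square root; this step needs $\|\mathbf{Q}(t)\|\geq \varepsilon t_0$ so that the quantity under the square is nonnegative, and that is precisely what the $\varepsilon^2 t_0^2$ term in $\lambda$ buys. Your route via $a^2-b^2\geq 2b(a-b)$ followed by division by $2\|\mathbf{Q}(t)\|$ is equally valid and does not require that extra margin, confirming your remark that the $\varepsilon^2 t_0^2$ piece is slack under your argument (though of course it does no harm, since enlarging $\lambda$ only weakens the hypothesis of Lemma~\ref{lem:drift}). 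One minor slip: the telescoped $f$-bound is $4VK/\eta$, not $2VK/\eta$, matching the stated $\lambda$.
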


\begin{proof}[Proof of Theorem \ref{thm:slater}]
By the drift lemma (Lemma \ref{lem:drift}) and Lemma \ref{lem:drift-2}, we have 
\[
\expect{\|\mf Q(t)\|}\leq 
 \frac{(VL\eta D^2 + B^2+2VMD)t_0+4VK/\eta + \varepsilon Bt_0^2+\varepsilon^2t_0^2}{\varepsilon t_0} + \frac{8B^2}{\varepsilon}t_0
 \cdot \ln\l( 1+ \frac{32B^2}{\varepsilon^2}e \r).
\]
Take $\eta = 1/\sqrt{T},~V=\sqrt{T}$ and $t_0=\sqrt{T}$ gives
\[
\expect{\|\mf Q(t)\|}\leq\l(\frac{B^2+LD^2+2MD}{\varepsilon} + B+4K+\varepsilon + \frac{8B^2}{\varepsilon}
 \cdot \ln\l( 1+ \frac{32B^2}{\varepsilon^2}e \r)\r)\sqrt{T}.
\]
By Lemma \ref{lem:q-constraint}, we have
\begin{equation}\label{constraint-vio}
\dotp{\mf a_i}{\expect{\overline{x}_T}} - b_i\leq 
\l(\frac{B^2+LD^2+2MD}{\varepsilon} + B+4K+\varepsilon + \frac{8B^2}{\varepsilon}
 \cdot \ln\l( 1+ \frac{32B^2}{\varepsilon^2}e \r)\r)\frac{1}{\sqrt{T}}.
\end{equation}
By Lemma \ref{lem:diff},
\begin{multline*}
\l|\dotp{\mathbf{a}_i}{\expect{\frac1T\sum_{t= 0}^{T-1}{\gamma}_{t-1}}} - \dotp{\mathbf{a}_i}{\expect{\overline{x}_T}}\r|
\leq \|\mathbf{a}_i\|  \expect{\l\|\frac1T\sum_{t= 0}^{T-1}{\gamma}_{t-1} - \overline x_T\r\|}
\leq \frac{\|\mathbf{a}_i\|D}{\eta T} = \frac{\|\mathbf{a}_i\|D}{ T^{1/3}}
\end{multline*}
which implies 
$$
\dotp{\mathbf{a}_i}{\expect{\gamma_\alpha}} - b_i
 = \dotp{\mathbf{a}_i}{\expect{\frac1T\sum_{t= 0}^{T-1}{\gamma}_{t-1}}} - b_i
 \leq  \dotp{\mathbf{a}_i}{\expect{\overline{x}_T}} - b_i + \frac{\|\mathbf{a}_i\|D}{ T^{1/3}}
$$
which, by substituting in \eqref{constraint-vio}, implies the constraint violation bound. The objective suboptimality bound and distance to feasibility bound follow directly from Lemma \ref{thm:main} and \ref{lem:path-average} with $\eta = 1/\sqrt{T},~V=\sqrt{T}$.
\end{proof}

\subsection{Time average optimization in non-convex opportunistic scheduling}
In this section, we consider the application of our algorithm in opportunistic scheduling and show how one can design a policy to achieve $\gamma_\alpha$ computed by our proposed algorithm. 

We consider a wireless system with $d$ users that transmit over their own links. The wireless channels can change over time and this affects the set of transmission rates available
for scheduling. Specifically, the random sequence $\l\{ S[t] \r\}_{t=0}^\infty$ can be a process of independent and identically distributed (i.i.d.) channel state vectors that take values in some set $\mathcal{S}\subseteq \mb R^d$. The decision variable $x_t$ is the transmission  rate vector chosen from $\Gamma_{S[t]}$, which is the set of available rates determined by the observation $S[t]$. 

We are interested in how well the system performs over   $T$ slots, i.e. whether or not the time average $\overline{x}_T = \frac1T\sum_{t=0}^{T-1}x_t$ minimizes $f(\cdot)$ while satisfying the constraints:
\begin{align*}
\min~~&\limsup_{T\rightarrow\infty}f(\expect{\overline{x}_T} )    \\
s.t.~~
&\limsup_{T\rightarrow\infty}\dotp{\mathbf{a}_i}{\expect{\overline{x}_T}}\leq b_i,~\forall i\in\{1,2,\cdots,N\} \\
&x_t\in\mathcal{X}_{S[t]},~\forall t\in\{0,1,2,\cdots\}.
\end{align*}

In the previous section, we show our algorithm produces an $\varepsilon$ near local optimal solution vector $\gamma_\alpha\in\overline{\Gamma}^*$ to (\ref{prob-1}-\ref{prob-3}) via a randomized selection procedure. Here, we show one can make use of the vector $\gamma_\alpha$ obtained by the proposed algorithm and rerun the system with a new sequence of transmission vectors $\l\{ x_t \r\}_{t=0}^{\infty}$ so that the time average $\overline{x}_T$ approximates the solution to the aforementioned time average problem.

Our idea is to run the system for $T$ slots, obtain $\gamma_\alpha$, and then run the system for another $T$ slots by choosing $x_t\in\mathcal{X}_{S[t]},~t=0,1,2,\cdots$ to solve the following problem:
\begin{align*}
\min ~~&\limsup_{T\rightarrow\infty}\|\mathbb{E}_{\alpha}[\overline{x}_T] - \gamma_\alpha\|^2\\
s.t.~~&x_t\in\mathcal{X}_{S[t]},~\forall t\in\{0,1,2,\cdots\},
\end{align*}
where $\mathbb E_\alpha[\cdot]$ denotes the the conditional expectation conditioned on the randomness in the first $T$ time slots generating $\gamma_\alpha$.
Interestingly, this is also a stochastic optimization problem with a smooth convex objective (without linear constraints), on which we can apply a special case of the primal-dual Frank-Wolfe algorithm with time varying weights $\eta_t$ and without virtual queues as follows: Let $\gamma_{-1} = 0$, $\eta_t = 1/(t+1)$ and at each time slot $t\in\{0,1,2,\cdots\}$
\begin{enumerate}
\item Choose $x_t$ as follows
\[x_t := \argmin_{x\in\mathcal{X}_{S[t]}}\dotp{\gamma_{t-1} - \gamma_{\alpha}}{x}\]
\item Update $\gamma_t$:
\[\gamma_t = (1-\eta_t)\gamma_{t-1} + \eta_t x_t.\]
\end{enumerate}
The following theorem, which is proved in \cite{neely2017frank}, gives the performance of this algorithm:
\begin{lemma}[Theorem 1 of \cite{neely2017frank}]\label{lem:improved-FW}
For any $T\geq1$, we have
\[
\l\|\mathbb{E}_{\alpha}[\overline{x}_T] - \gamma_\alpha\r\|^2 - \|\gamma^*-\gamma_\alpha\|^2\leq  \frac{D^2(1+\ln(T))}{T},
\]
where $\gamma^* := \argmin_{\gamma\in\overline{\Gamma}^*}\|\gamma - \gamma_\alpha\|$.
\end{lemma}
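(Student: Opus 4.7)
The plan is to recognize that the second-phase algorithm is precisely the standard Frank-Wolfe iteration for the smooth convex surrogate
\[
g(\gamma) := \|\gamma - \gamma_\alpha\|^2,
\]
(treated with $\gamma_\alpha$ frozen, so that $\mathbb{E}_\alpha[\cdot]$ is the only remaining randomness), since $\nabla g(\gamma_{t-1}) = 2(\gamma_{t-1}-\gamma_\alpha)$ and the linear subproblem agrees up to the factor $1/2$. The function $g$ is $2$-smooth and convex, and the feasible ``target set'' for the inner product is $\overline{\Gamma}^*$ via randomized stationary policies, with $\gamma^*$ its optimizer. The goal is a standard $O(\log T/T)$ Frank-Wolfe rate.

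First I would apply the descent lemma \eqref{eq:smooth} to $g$ at $\gamma_t = (1-\eta_t)\gamma_{t-1}+\eta_t x_t$, giving
\[
g(\gamma_t) \le g(\gamma_{t-1}) + \eta_t\langle \nabla g(\gamma_{t-1}),\, x_t-\gamma_{t-1}\rangle + \eta_t^2\, \|x_t-\gamma_{t-1}\|^2,
\]
and bound the last term by $\eta_t^2 D^2$ using \eqref{eq:D}. Next, since $x_t$ minimizes $\langle \gamma_{t-1}-\gamma_\alpha,\,x\rangle$ over $\mathcal{X}_{S[t]}$, the same randomized-stationary-policy argument used in Lemma \ref{lem:key-equation} (with no virtual queues) yields, after conditioning on $\mathcal{H}_t$ and passing to the closure,
\[
\mathbb{E}_\alpha\!\left[\langle \nabla g(\gamma_{t-1}),\, x_t-\gamma_{t-1}\rangle\mid\mathcal{H}_t\right] \le \langle \nabla g(\gamma_{t-1}),\, \gamma^*-\gamma_{t-1}\rangle.
\]
Convexity of $g$ then gives $\langle \nabla g(\gamma_{t-1}),\gamma^*-\gamma_{t-1}\rangle \le g(\gamma^*)-g(\gamma_{t-1})$. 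Combining and taking full conditional expectation, the ``residual'' $r_t := \mathbb{E}_\alpha[g(\gamma_t)] - g(\gamma^*)$ satisfies the classical FW recursion
\[
r_t \le (1-\eta_t)\, r_{t-1} + \eta_t^2 D^2.
\]

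With $\eta_t = 1/(t+1)$, multiplying by $t+1$ telescopes: $(t+1)r_t \le t\, r_{t-1} + D^2/(t+1)$. Since $\eta_0=1$ kills the $r_{-1}$ term, summing yields $(T)\,r_{T-1} \le D^2\bigl(1 + \sum_{s=2}^{T} 1/s\bigr) \le D^2(1+\ln T)$, hence $r_{T-1} \le D^2(1+\ln T)/T$. Finally, the particular step size $\eta_t = 1/(t+1)$ makes the Frank-Wolfe iterate coincide with the running average: an easy induction gives $(t+1)\gamma_t = \sum_{s=0}^t x_s$, so $\gamma_{T-1} = \overline{x}_T$. Applying Jensen's inequality to the convex $g$,
\[
\|\mathbb{E}_\alpha[\overline{x}_T] - \gamma_\alpha\|^2 = g\bigl(\mathbb{E}_\alpha[\gamma_{T-1}]\bigr) \le \mathbb{E}_\alpha[g(\gamma_{T-1})] = r_{T-1} + \|\gamma^*-\gamma_\alpha\|^2,
\]
which is the claimed bound.

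The only genuinely delicate step is the randomized-stationary-policy reduction that puts $\gamma^*\in\overline{\Gamma}^*$ into the linear-subproblem inequality; this is essentially a re-run of the proof of Lemma \ref{lem:key-equation} but conditioned on the first-phase randomness that fixes $\gamma_\alpha$, and one must ensure that the second-phase $\{S[t]\}$ is independent of $\gamma_\alpha$ so that conditioning on $\gamma_\alpha$ does not disturb the ``$\mathbb{E}[v_t]=\gamma$'' identity. Everything else is routine Frank-Wolfe bookkeeping.
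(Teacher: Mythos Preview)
The paper does not actually prove this lemma; it is quoted verbatim as Theorem~1 of \cite{neely2017frank} and used as a black box. So there is no in-paper argument to compare against. Your proof is nonetheless correct and is essentially the standard Frank-Wolfe convergence argument one would expect in that reference: descent lemma for the $2$-smooth convex surrogate $g(\gamma)=\|\gamma-\gamma_\alpha\|^2$, the randomized-stationary-policy comparison (the queue-free specialization of Lemma~\ref{lem:key-equation}) to replace $x_t$ by any $\gamma^*\in\overline{\Gamma}^*$, and the telescoping recursion $r_t\le(1-\eta_t)r_{t-1}+\eta_t^2 D^2$ with $\eta_t=1/(t+1)$. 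The observation that this step size makes $\gamma_{T-1}=\overline{x}_T$ exactly, together with Jensen, closes the argument cleanly. Your caveat about needing the second-phase $\{S[t]\}$ to be independent of the first-phase randomness generating $\gamma_\alpha$ is well placed; the paper implicitly assumes this by treating the two runs as separate.
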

Note that by Theorem \ref{col:non-convex}, we have 
$$\expect{\|\gamma^*-\gamma_\alpha\|^2} = \expect{\text{dist}\l(\gamma_\alpha, \overline{\Gamma}^*\r)^2}\leq \frac{D^2}{T^{2/3}}.$$
Thus, Lemma \ref{lem:improved-FW} together with the above bound implies
\begin{equation}\label{eq:diff-bound}
\expect{\l\|\mathbb{E}_\alpha[\overline{x}_T] - \gamma_\alpha\r\|}\leq \sqrt{\frac{D^2(1+\ln(T))}{T}} + \frac{D}{T^{1/3}}\leq \frac{(\sqrt{2}+1)D}{T^{1/3}}
\end{equation}

To get the performance bound on $\overline{x}_T$, 
we also need the following lemma which bounds the perturbation on the gap function $\mathcal{G}(\gamma)$ defined in \eqref{eq:FW-gap}. The proof is delayed to the appendix.
\begin{lemma}\label{lem:FW-gap-bound}
For any $\gamma,\widetilde\gamma\in\mathcal{B}$,  we have
\[
\left|\mathcal{G}(\gamma) - \mathcal{G}(\widetilde\gamma)\right| \leq (2DL+M)\|\gamma-\widetilde\gamma\|.
\]
\end{lemma}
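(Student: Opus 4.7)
The plan is to prove the stated inequality by a standard argument showing that $\mathcal{G}$ is Lipschitz continuous. Because the feasible set $\{v \in \overline{\Gamma}^*:\mathbf{A}v \leq \mathbf{b}\}$ is the intersection of the closed set $\overline{\Gamma}^*$ with closed halfspaces inside the compact set $\mathcal{B}$, it is itself compact; and the function $v \mapsto \langle \nabla f(\gamma),\gamma-v\rangle$ is continuous (indeed, linear in $v$). Hence the supremum in the definition of $\mathcal{G}(\gamma)$ is attained at some $v^\star \in \overline{\Gamma}^*$ with $\mathbf{A}v^\star\leq\mathbf{b}$. The same $v^\star$ is feasible in the supremum defining $\mathcal{G}(\widetilde\gamma)$, so $\mathcal{G}(\widetilde\gamma)\geq \langle \nabla f(\widetilde\gamma),\widetilde\gamma - v^\star\rangle$.

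Combining these two facts,
\[
\mathcal{G}(\gamma)-\mathcal{G}(\widetilde\gamma)
\;\leq\; \langle \nabla f(\gamma),\gamma-v^\star\rangle - \langle \nabla f(\widetilde\gamma),\widetilde\gamma - v^\star\rangle.
\]
I would then decompose the right-hand side in a way that cleanly separates the two sources of perturbation. Specifically,
\[
\langle \nabla f(\gamma),\gamma - v^\star\rangle - \langle \nabla f(\widetilde\gamma),\widetilde\gamma - v^\star\rangle
= \langle \nabla f(\gamma) - \nabla f(\widetilde\gamma),\gamma\rangle - \langle \nabla f(\gamma) - \nabla f(\widetilde\gamma),v^\star\rangle + \langle \nabla f(\widetilde\gamma),\gamma - \widetilde\gamma\rangle.
\]
Now apply Cauchy–Schwarz to each term. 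Since $0 \in \overline{\Gamma}^*\subseteq \mathcal{B}$ and $\gamma, v^\star \in \mathcal{B}$, the diameter bound \eqref{eq:D} gives $\|\gamma\|\leq D$ and $\|v^\star\|\leq D$. The Lipschitz gradient assumption yields $\|\nabla f(\gamma)-\nabla f(\widetilde\gamma)\|\leq L\|\gamma-\widetilde\gamma\|$, while \eqref{eq:M} provides $\|\nabla f(\widetilde\gamma)\|\leq M$. Summing the three bounds gives
\[
\mathcal{G}(\gamma) - \mathcal{G}(\widetilde\gamma)\leq LD\|\gamma-\widetilde\gamma\| + LD\|\gamma-\widetilde\gamma\| + M\|\gamma-\widetilde\gamma\| = (2LD + M)\|\gamma-\widetilde\gamma\|.
\]
Swapping the roles of $\gamma$ and $\widetilde\gamma$ (now attaining the supremum at some $\widetilde v^\star$ for $\mathcal{G}(\widetilde\gamma)$ and using it as a feasible point for $\mathcal{G}(\gamma)$) yields the reverse one-sided inequality, and taking absolute values proves the claim.

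There is no hard step here; the only point requiring care is the justification that the suprema are attained so one may pick a common feasible $v^\star$ to compare both quantities, and the bookkeeping choice of splitting $\langle \nabla f(\gamma)-\nabla f(\widetilde\gamma),\gamma-v^\star\rangle$ as two separate Cauchy–Schwarz terms (each incurring a factor of $D$), which is what produces the constant $2LD$ rather than $LD$. Everything else is a direct appeal to the already stated boundedness assumptions \eqref{eq:M} and \eqref{eq:D} and the $L$-Lipschitz smoothness of $f$.
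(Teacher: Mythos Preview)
Your proof is correct and follows essentially the same route as the paper's: both arguments reduce $\mathcal{G}(\gamma)-\mathcal{G}(\widetilde\gamma)$ to the same three-term decomposition (two terms bounded via the $L$-Lipschitz gradient and the diameter $D$, one via the gradient bound $M$) and then symmetrize. The only cosmetic differences are that you fix a maximizer $v^\star$ by compactness while the paper keeps the supremum throughout, and your decomposition places $\nabla f(\widetilde\gamma)$ (rather than $\nabla f(\gamma)$) in the $M$-term, which is an immaterial relabeling.
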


Combining Lemma \ref{lem:FW-gap-bound} with \eqref{eq:diff-bound}, we have
\[
\expect{|\mathcal{G}(\mathbb{E}_\alpha[\overline{x}_T]) - \mathcal{G}(\gamma_\alpha)| } \leq\frac{(2DL+M)(\sqrt{2}+1)D}{T^{1/3}}.
\]
Note that $\mathbb{E}_\alpha[\overline{x}_T]\in\overline{\Gamma}^*$. 
Substituting this bound and \eqref{eq:diff-bound} into Theorem \ref{col:non-convex} readily gives the following bound.

\begin{corollary}
Let $V=T^{1/3}$ and $\eta=\frac{1}{T^{2/3}}$, then, we have objective and constraint violation bounds as follows,
\begin{align*}
&\expect{\mathcal{G}(\mathbb{E}_\alpha[\overline{x}_T])} \leq \l( 2K + B^2 + \frac{LD^2}{2} + (2DL+M)(\sqrt{2}+1)D\r)\frac{1}{T^{1/3}},\\
&\dotp{\mathbf{a}_i}{\expect{\overline{x}_T}} - b_i \leq \left(\sqrt{2MD+4K + B^2 + LD^2 }+ \|\mathbf{a}_i\|(\sqrt{2}+2)D\right)\frac{1}{T^{1/3}},~\forall i\in\{1,2,\cdots,N\}.
\end{align*}
where $\mathbb E_\alpha[\cdot]$ denoted the the conditional expectation conditioned on the randomness in the first $T$ time slots generating $\gamma_\alpha$.
\end{corollary}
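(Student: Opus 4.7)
The plan is to treat this corollary as a direct combination of three already-established ingredients: the $\gamma_\alpha$ bounds from Theorem \ref{col:non-convex}, the two-run approximation inequality \eqref{eq:diff-bound}, and the Lipschitz-type perturbation bound on the Frank-Wolfe gap in Lemma \ref{lem:FW-gap-bound}. No new estimates of the drift or the virtual queue are required; the work is purely in transferring the guarantees for $\gamma_\alpha$ over to $\mathbb{E}_\alpha[\overline{x}_T]$.

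For the Frank-Wolfe gap bound, I would begin with the triangle inequality
\[
\expect{\mathcal{G}(\mathbb{E}_\alpha[\overline{x}_T])} \leq \expect{\mathcal{G}(\gamma_\alpha)} + \expect{\bigl|\mathcal{G}(\mathbb{E}_\alpha[\overline{x}_T]) - \mathcal{G}(\gamma_\alpha)\bigr|}.
\]
Theorem \ref{col:non-convex} with $V=T^{1/3}$, $\eta = 1/T^{2/3}$ bounds the first term by $(2K + B^2 + LD^2/(2T^{1/3}))/T^{1/3} \leq (2K + B^2 + LD^2/2)/T^{1/3}$. For the second term, Lemma \ref{lem:FW-gap-bound} gives $|\mathcal{G}(\mathbb{E}_\alpha[\overline{x}_T]) - \mathcal{G}(\gamma_\alpha)| \leq (2DL+M)\|\mathbb{E}_\alpha[\overline{x}_T] - \gamma_\alpha\|$, and taking expectations and combining with \eqref{eq:diff-bound} produces $(2DL+M)(\sqrt{2}+1)D/T^{1/3}$. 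Adding the two pieces gives the stated constant.

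For the constraint violation bound, I would use the tower property to write $\expect{\overline{x}_T} = \expect{\mathbb{E}_\alpha[\overline{x}_T]}$ and then split
\[
\dotp{\mathbf{a}_i}{\expect{\overline{x}_T}} - b_i = \bigl(\dotp{\mathbf{a}_i}{\expect{\gamma_\alpha}} - b_i\bigr) + \dotp{\mathbf{a}_i}{\expect{\mathbb{E}_\alpha[\overline{x}_T] - \gamma_\alpha}}.
\]
Theorem \ref{col:non-convex} supplies the bound $\bigl(\sqrt{2MD + (4K+B^2)/T^{1/3} + LD^2/T^{2/3}} + \|\mathbf{a}_i\|D\bigr)/T^{1/3} \leq \bigl(\sqrt{2MD + 4K + B^2 + LD^2} + \|\mathbf{a}_i\|D\bigr)/T^{1/3}$ on the first summand (using $T\geq 1$). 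For the second summand, Cauchy–Schwarz together with \eqref{eq:diff-bound} yields
\[
\bigl|\dotp{\mathbf{a}_i}{\expect{\mathbb{E}_\alpha[\overline{x}_T] - \gamma_\alpha}}\bigr| \leq \|\mathbf{a}_i\|\,\expect{\|\mathbb{E}_\alpha[\overline{x}_T] - \gamma_\alpha\|} \leq \frac{\|\mathbf{a}_i\|(\sqrt{2}+1)D}{T^{1/3}}.
\]
Combining $\|\mathbf{a}_i\|D + \|\mathbf{a}_i\|(\sqrt{2}+1)D = \|\mathbf{a}_i\|(\sqrt{2}+2)D$ recovers the stated coefficient.

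Since every step is just a substitution of an already-proved estimate, there is no real obstacle; the only thing to watch is the bookkeeping between the two independent randomizations (the $\alpha$-randomization that produces $\gamma_\alpha$ and the new $T$-slot run that produces $\overline{x}_T$), which is handled cleanly by the tower property and by distinguishing $\mathbb{E}_\alpha[\cdot]$ from $\expect{\cdot}$. Minor care is also needed to absorb the $T$-dependent terms inside the square root by the monotonicity $T \geq 1$, so that the constant in the constraint-violation bound is independent of $T$.
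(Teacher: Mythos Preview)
Your proposal is correct and follows essentially the same approach as the paper: the paper's proof is only the sentence ``Substituting this bound and \eqref{eq:diff-bound} into Theorem \ref{col:non-convex} readily gives the following bound,'' and your write-up spells out exactly those substitutions (triangle inequality on $\mathcal{G}$ via Lemma \ref{lem:FW-gap-bound}, the split of the constraint violation through $\gamma_\alpha$, and absorbing the $T$-dependent terms under the square root using $T\geq 1$).
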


\subsection{Distributed non-convex stochastic optimization}
In this section, we study the problem of distributed non-convex stochastic optimization over a connected network of $K$ nodes without a central controller, and show that our proposed algorithm can be applied in such a scenario with theoretical performance guarantees.

Consider an undirected connected graph $(\mathcal  V, \mathcal  E)$, where $\mathcal V=\{1,2,\cdots,K\}$ is a set of $K$ nodes, $\mathcal  E=\{e_{ij}\}_{i,j\in\mathcal  V}$ is a collection of undirected edges, $e_{ij}=1$ if there exists an undirected edge between node $i$ and node $j$, and $e_{ij}=0$ otherwise. Two nodes $i$ and $j$ are said to be neighbors of each other if $e_{ij} = 1$. Each node holds a local vector $x^{(i)}_t\in\mathcal{X}^{(i)}_t\subseteq\mathbb{R}^{d_i}$. Let $\Gamma_i^*$ be the set of all possible ``one-shot'' expectations $\expect{x^{(i)}_t}$ achieved by any randomized stationary algorithms of choosing $x^{(i)}_t$ and let $\overline{\Gamma}_i^*$ be the closure of $\Gamma_i^*$. In addition, these nodes must collectively choose one $\theta\in\Theta\subseteq \mb R^p$, where $\Theta$ is a compact set. The goal is to solve the following problem\\
\begin{align}
\min~~&\sum_{i=1}^Kf^{(i)}\l(\gamma^{(i)},\theta\r) \label{dist-opt1}\\
s.t.~~&\gamma^{(i)}\in\overline{\Gamma}_i^*,~\forall i\in\{1,2,\cdots,K\},~\theta\in\Theta,\label{dist-opt2}
\end{align}
where $f^{(i)}:\mb R^{d_i}\times \mb R^p\rightarrow \mb R$ is a local function at node $i$, which can be non-convex on both $\gamma^{(i)}$ and $\theta$. The main difficult is that each node only knows its own $f^{(i)}$ and can only communicate with its neighbors whereas a global $\theta$ has to be chosen jointly by all nodes.
A classical way of rewriting (\ref{dist-opt1}-\ref{dist-opt2}) in a ``distributed fashion'' is to introduce a ``local copy'' $\theta^{(i)}$ of $\theta$ for each node $i$ and solve the following problem with consensus constraints.
\begin{align}
\min~~&\sum_{i=1}^Kf^{(i)}\l(\gamma^{(i)},\theta^{(i)}\r) \nonumber\\
s.t.~~&\gamma^{(i)}\in\overline{\Gamma}_i^*,~\forall i\in\{1,2,\cdots,K\},~\theta^{(i)}\in\Theta,\nonumber\\
&\theta^{(i)} = \theta^{(j)},~\text{if}~e_{ij} =1,~\forall i,j\in\{1,2,\cdots,K\}.\label{dist-opt11}
\end{align}

Note that his problem fits into the framework of (\ref{prob-1}-\ref{prob-3}) by further rewriting \eqref{dist-opt11} as a collection of inequality constraints:
\[
\theta^{(i)} \leq \theta^{(j)},~\theta^{(j)} \leq \theta^{(i)},~\text{if}~e_{ij} =1,~\forall i,j\in\{1,2,\cdots,K\}.
\]
Let $\mathcal{N}^{(i)}$ be the set of neighbors around node $i$, we can then group these constraints node-wise and write our program as follows:
\begin{align}
\min~~&\sum_{i=1}^Kf^{(i)}\l(\gamma^{(i)},\theta^{(i)}\r) \label{dist-opt21}\\
s.t.~~&\gamma^{(i)}\in\overline{\Gamma}_i^*,~\forall i\in\{1,2,\cdots,K\},~\theta^{(i)}\in\Theta,\label{dist-opt22}\\
&\theta^{(i)} \leq \theta^{(j)},~\forall j\in\mathcal{N}^{(i)},~\forall i\in\{1,2,\cdots,K\}.\label{dist-opt23}
\end{align}
To apply our algorithm, for each constraint in \eqref{dist-opt23}, 
we introduce a corresponding virtual queue vector $\mathbf Q_{ij}(t)\in\mb R^p$, which is equal to 0 at $t=0$ and updated as follows:
\begin{equation}\label{queue-update-2}
\mathbf Q_{ij}(t+1)  = \max\l\{ \mathbf Q_{ij}(t) + \theta^{(i)}(t) - \theta^{(j)}(t), 0\r\},
\end{equation}
where the maximum is taken entry-wise. Then, during each time slot $t$, we solve the following optimization problem:
\begin{multline*}
\min_{x^{(i)}\in\mathcal{X}^{(i)}_t,~\theta^{(i)}\in\Theta,~i=1,2,\cdots,K}
V\sum_{i=1}^K\l(\dotp{\nabla_\gamma f^{(i)}\l(\gamma_{t-1}^{(i)},\theta_{t-1}^{(i)}\r)}{x^{(i)}}
+\dotp{\nabla_\theta f^{(i)}\l(\gamma_{t-1}^{(i)},\theta_{t-1}^{(i)}\r)}{\theta^{(i)}}\r)\\
+\sum_{i=1}^K\sum_{j\in\mathcal{N}^{(i)}}\dotp{\mathbf{Q}_{ij}(t)}{\theta^{(i)} - \theta^{(j)}},
\end{multline*}
where $\nabla_\gamma f^{(i)}$ and $\nabla_\theta f^{(i)}$ are partial derivatives regarding $\gamma$ and $\theta$ variables respectively.
This is a separable optimization problem regarding both the agents and the decision variables $x^{(i)}$ and $\theta^{(i)}$. Overall, we have the following algorithm: Let $\gamma_{-1}^{(i)} = 0\in\mathbb{R}^{d_i}$, $\beta_{-1}^{(i)} = 0\in\mathbb{R}^{p}$. At the beginning, all the nodes can observe a common random variable $\alpha$ uniformly distributed in 
$\{0,1,2,\cdots,T-1\}$,
and at each time slot $t\in\{0,1,2,\cdots,T\}$, 

\begin{enumerate}
\item Each agent $i$ observes $\mathcal{X}^{(i)}_t$ and solve for $x^{(i)}_t$ via the following:
\[
x^{(i)}_t:= \argmin_{x^{(i)}\in\mathcal{X}^{(i)}_t}\dotp{\nabla_\gamma f^{(i)}\l(\gamma_{t-1}^{(i)},\theta_{t-1}^{(i)}\r)}{x^{(i)}}
\]
\item Each agent $i$ solves for $\theta^{(i)}_t$ observing the queue states $\l\{\mathbf{Q}_{ji}(t)\r\}_{j\in\mathcal{N}^{(i)}}$ of the neighbors:
\[
\theta^{(i)}_t := \argmin_{\theta^{(i)}\in\Theta}~V\dotp{\nabla_\theta f^{(i)}\l(\gamma_{t-1}^{(i)},\beta_{t-1}^{(i)}\r)}{\theta^{(i)}} + \sum_{j\in\mathcal{N}^{(i)}}
\dotp{\mathbf{Q}_{ij}(t) - \mathbf{Q}_{ji}(t)}{\theta^{(i)}}
\]
\item Each agent $i$ updates $\gamma_{t}^{(i)} = (1-\eta)\gamma_{t-1}^{(i)} + \eta x^{(i)}_t$, $\beta_{t}^{(i)} = (1-\eta)\beta_{t-1}^{(i)} + \eta \theta^{(i)}_t$, 
$\l\{\mathbf{Q}_{ij}(t)\r\}_{j\in\mathcal{N}^{(i)}}$ via \eqref{queue-update-2}.
\end{enumerate}
We then output $\l\{\l(\gamma_\alpha^{(i)},\beta_\alpha^{(i)}\r)\r\}_{i=1}^K$ as the solution.

We have the following performance bound on the algorithm.
\begin{corollary}
Let $V=T^{1/3}$ and $\eta=\frac{1}{T^{2/3}}$, then, we have objective and constraint violation bounds as follows,
\begin{align*}
&\expect{\sum_{i=1}^K\mathcal{G}^{(i)}(\gamma_\alpha^{(i)}, \beta_\alpha^{(i)})} \leq \mathcal{O}\l(\frac{1}{T^{1/3}}\r),\\
& \l|\expect{\beta_\alpha^{(i)}} - \expect{\beta_\alpha^{(j)}}\r|\leq \mathcal{O}\l(\frac{1}{T^{1/3}}\r),~\forall i,j\in\{1,2,\cdots,N\},~\\
&\expect{\text{dist}\l(\gamma^{(i)}_\alpha, \overline{\Gamma}_i^*\r)^2}\leq \mathcal{O}\l(\frac{1}{T^{2/3}}\r),
\end{align*}
where the notation $\mathcal{O}(\cdot)$ hides a constant independent of $T$.
Thus, to achieve an $\varepsilon$-near local optimality, the convergence time is $\mathcal{O}(1/\varepsilon^3)$.
\end{corollary}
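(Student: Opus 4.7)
The plan is to reduce the distributed problem to the master framework \eqref{prob-1}-\eqref{prob-3} and then invoke Theorem \ref{col:non-convex} directly. Concretely, I would stack the local variables into a single decision vector $x_t = (x_t^{(1)},\theta_t^{(1)},\ldots,x_t^{(K)},\theta_t^{(K)})$ of dimension $d=\sum_i d_i + Kp$, take $\mathcal{X}_{S[t]} = \prod_i \mathcal{X}_t^{(i)} \times \Theta^K$, and use the aggregated objective $f(\gamma) = \sum_{i=1}^K f^{(i)}(\gamma^{(i)},\beta^{(i)})$. The consensus inequalities \eqref{dist-opt23}, read entry-wise, produce a finite collection of linear constraints $\dotp{\mathbf{a}_k}{\gamma} \leq 0$, and the queues $\mathbf{Q}_{ij}(t)$ of \eqref{queue-update-2} are exactly the virtual queues $Q_i(t)$ of Section 2 for these constraints. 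Because $\Theta$, each $\mathcal{X}_t^{(i)}$, and each $\overline{\Gamma}_i^*$ are bounded and each $f^{(i)}$ is smooth, the aggregated constants $M,K,B,D,L$ in \eqref{eq:M}-\eqref{eq:D} are finite.

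Second, I would verify that the per-node updates listed in the algorithm coincide with a single step of \eqref{sub-problem} applied to the stacked problem. Since both the objective and each consensus constraint couples only variables at adjacent nodes, the joint linear subproblem separates into the per-agent minimizations for $x_t^{(i)}$ and $\theta_t^{(i)}$ displayed in steps (1) and (2); the grouping $\sum_{j\in\mathcal{N}^{(i)}}(\mathbf{Q}_{ij}(t)-\mathbf{Q}_{ji}(t))$ in the $\theta^{(i)}$ update comes from collecting all queue terms in which $\theta^{(i)}$ appears. Once separability is confirmed, the convergence bounds of Theorem \ref{col:non-convex} apply verbatim with $V=T^{1/3}$ and $\eta = 1/T^{2/3}$.

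Third, I would translate each of the three bounds back to the distributed notation. For the Frank-Wolfe gap, I would argue that because the feasible set in the definition \eqref{eq:FW-gap} is the product $\prod_i \overline{\Gamma}_i^* \times \Theta^K$ (the linear consensus constraints are explicitly omitted in the sup), and because $f$ is separable, the aggregated gap splits as $\mathcal{G}(\gamma,\beta) = \sum_i \mathcal{G}^{(i)}(\gamma^{(i)},\beta^{(i)})$; the $\mathcal{O}(T^{-1/3})$ bound on $\expect{\mathcal{G}(\gamma_\alpha)}$ in Theorem \ref{col:non-convex} then gives the first claim. For the distance-to-feasibility bound, the product structure of $\overline{\Gamma}^*$ implies $\text{dist}(\gamma_\alpha,\overline{\Gamma}^*)^2 \geq \sum_i \text{dist}(\gamma_\alpha^{(i)},\overline{\Gamma}_i^*)^2$, so the $\mathcal{O}(T^{-2/3})$ bound of Theorem \ref{col:non-convex} carries over term by term. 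Finally, the constraint violation bound applied to each entry of $\theta^{(i)}\leq\theta^{(j)}$ and $\theta^{(j)}\leq\theta^{(i)}$ yields $|\expect{\beta_\alpha^{(i)}} - \expect{\beta_\alpha^{(j)}}| \leq \mathcal{O}(T^{-1/3})$ entry-wise for neighbors, and then for arbitrary node pairs $i,j$ by summing along a path in the connected graph $(\mathcal{V},\mathcal{E})$ (the path length contributes only a constant factor, independent of $T$).

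The main obstacle is the first point of Step 3: one must check carefully that the Frank-Wolfe gap defined with respect to the \emph{stacked} set $\overline{\Gamma}^*$ in Theorem \ref{col:non-convex} does decompose into the sum of local gaps $\mathcal{G}^{(i)}$, i.e., that the sup over the product set equals the sum of per-node sups. This in turn requires that $\mathcal{G}^{(i)}$ in the statement is defined using the constraint-free sup over $\overline{\Gamma}_i^*\times\Theta$ (matching the convention in \eqref{eq:FW-gap}); as long as this convention is adopted, separability of both $f$ and the feasible set makes the decomposition immediate and the rest follows by substitution.
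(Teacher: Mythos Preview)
Your overall strategy---stack the variables, verify that the per-node updates are exactly \eqref{sub-problem} applied to the stacked problem, then invoke Theorem~\ref{col:non-convex}---is the right one and is precisely what the paper intends (the paper gives no proof of this corollary; it is stated as an immediate consequence of the reduction already laid out in \eqref{dist-opt21}--\eqref{dist-opt23}). Your handling of the constraint-violation bound via paths in the connected graph and of the distance bound via the product structure of $\overline{\Gamma}^*$ is fine.

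There is, however, a genuine slip in your treatment of the Frank--Wolfe gap. You assert that in \eqref{eq:FW-gap} ``the linear consensus constraints are explicitly omitted in the sup,'' and later that the constraint-free per-node sup ``match[es] the convention in \eqref{eq:FW-gap}.'' This is a misreading: \eqref{eq:FW-gap} takes the supremum over $\{v\in\overline{\Gamma}^*:\mathbf{A}v\leq\mathbf{b}\}$, i.e.\ the linear constraints \emph{are} included. Consequently, in the stacked problem the sup is over vectors satisfying $\theta^{(i)}=\theta^{(j)}$ for all edges (hence, by connectedness, a common $\theta$), and the gap does \emph{not} decompose as $\sum_i \mathcal{G}^{(i)}$ with each $\mathcal{G}^{(i)}$ defined over $\overline{\Gamma}_i^*\times\Theta$. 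In fact the relationship goes the wrong way for your argument: the unconstrained per-node sum satisfies $\sum_i \mathcal{G}^{(i)}_{\text{unconstrained}} \geq \mathcal{G}_{\text{constrained}}$, so the bound on $\mathcal{G}_{\text{constrained}}$ from Theorem~\ref{col:non-convex} does not by itself control the larger quantity.

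The paper never defines $\mathcal{G}^{(i)}$, so the cleanest repair is to read $\sum_{i=1}^K \mathcal{G}^{(i)}(\gamma_\alpha^{(i)},\beta_\alpha^{(i)})$ simply as notation for the aggregated constrained gap $\mathcal{G}(\gamma_\alpha,\beta_\alpha)$ of \eqref{eq:FW-gap} (the summation reflecting only the additive structure of $f=\sum_i f^{(i)}$). Under that reading the first inequality follows directly from Theorem~\ref{col:non-convex} with no decomposition needed, and your remaining steps go through unchanged.
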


\bibliographystyle{imsart-number}
\bibliography{non-convex}

\section{Appendix}
\begin{proof}[Proof of Lemma \ref{lem:drift-2}]
First of all, rearranging \eqref{eq:unconditional} with the fact that $\Delta(t) =\frac12( \|\mf Q(t+1)\|^2 - \|\mf Q(t)\|^2)$ gives
\begin{multline*}
\|\mf Q(t+1)\|^2 - \|\mf Q(t)\|^2 \leq
\frac{2V}{\eta}\l( f(\gamma_{t-1}) - f(\gamma_{t})  \r) +
2V\dotp{\nabla f(\gamma_{t-1})}{x_t-\gamma_{t-1}} \\
+ 2\dotp{\mathbf Q(t)}{\mathbf{A}x_t-\mathbf{b}}
+VL\eta D^2 + B^2.
\end{multline*}
Taking the telescoping sums from $t$ to $t+t_0-1$ and taking conditional expectation from both sides conditioned on $\mathcal{H}_t$, where $\mathcal{H}_t$ is the system history up to time slot $t$ including $\mathbf{Q}(t)$, give
\begin{multline}\label{eq:sum-bound}
\expect{\|\mf Q(t+t_0)\|^2 - \|\mf Q(t)\|^2 |~\mathcal{H}_t} \leq
VL\eta D^2t_0 + B^2t_0+
\frac{2V}{\eta}\expect{ f(\gamma_{t-1}) - f(\gamma_{t+t_0-1}) | ~\mathcal{H}_t  } \\
+
2\expect{\left.\sum_{\tau = t}^{t+t_0-1}\l(V\dotp{\nabla f(\gamma_{\tau-1})}{x_{\tau}-\gamma_{\tau-1}} 
+ \dotp{\mathbf Q(\tau)}{\mathbf{A}x_{\tau}-\mathbf{b}}\r)\right|~\mathcal{H}_t}.\\
\leq VL\eta D^2t_0 + B^2t_0+\frac{4VK}{\eta}
+2\expect{\left.\sum_{\tau = t}^{t+t_0-1}\l(V\dotp{\nabla f(\gamma_{\tau-1})}{x_{\tau}-\gamma_{\tau-1}} 
+ \dotp{\mathbf Q(\tau)}{\mathbf{A}x_{\tau}-\mathbf{b}}\r)\right|~\mathcal{H}_t}
\end{multline}
To bound the last term on the right hand side, we use the tower property of the conditional expectation that for any $\tau\geq t$, 
\begin{align}
&\expect{\left.V\dotp{\nabla f(\gamma_{\tau-1})}{x_{\tau}-\gamma_{\tau-1}} 
+ \dotp{\mathbf Q(\tau)}{\mathbf{A}x_{\tau}-\mathbf{b}}\right|~\mathcal{H}_t}\nonumber\\
=&\expect{\left.\expect{V\dotp{\nabla f(\gamma_{\tau-1})}{x_{\tau}-\gamma_{\tau-1}} 
+ \dotp{\mathbf Q(\tau)}{\mathbf{A}x_{\tau}-\mathbf{b}} |~\mathcal{H}_{\tau}} \right|~\mathcal{H}_t}. \label{inter-t}
\end{align}
Since the proposed algorithm chooses $\mf x_{\tau}$ to minimize $V\dotp{\nabla f(\gamma_{\tau-1})}{x_{\tau}-\gamma_{\tau-1}} 
+ \dotp{\mathbf Q(\tau)}{\mathbf{A}x_{\tau}-\mathbf{b}}$ given $\mf Q(\tau)$, it must achieve less value than that of any randomized stationary policy. Specifically, it dominates $\widetilde{x}_\tau$ satisfying the Slater's condition (Assumption \ref{ass:slater}). This implies,
\begin{align*}
&\expect{V\dotp{\nabla f(\gamma_{\tau-1})}{x_{\tau}-\gamma_{\tau-1}} 
+ \dotp{\mathbf Q(\tau)}{\mathbf{A}x_{\tau}-\mathbf{b}} |~\mathcal{H}_{\tau}}\\
\leq&\expect{V\dotp{\nabla f(\gamma_{\tau-1})}{\widetilde{x}_{\tau}-\gamma_{\tau-1}} 
+ \dotp{\mathbf Q(\tau)}{\mathbf{A}\widetilde{x}_{\tau}-\mathbf{b}} |~\mathcal{H}_{\tau}}\\
=&\expect{V\dotp{\nabla f(\gamma_{\tau-1})}{\widetilde{x}_{\tau}-\gamma_{\tau-1}}  |~\mathcal{H}_{\tau}} 
+ \dotp{\mathbf Q(\tau)}{\expect{\mathbf{A}\widetilde{x}_{\tau}-\mathbf{b} |~\mathcal{H}_{\tau}}} \\
\leq&\expect{V\dotp{\nabla f(\gamma_{\tau-1})}{\widetilde{x}_{\tau}-\gamma_{\tau-1}}  |~\mathcal{H}_{\tau}}  - \varepsilon\sum_{i=1}^NQ_i(\tau)\leq VMD - \varepsilon\|\mf Q(\tau)\|,
\end{align*}
where the second inequality follows from Assumption \ref{ass:slater}, that 
$\expect{\mathbf{A}\widetilde{x}_{\tau}-\mathbf{b} |~\mathcal{H}_{\tau}} = \expect{\mathbf{A}\widetilde{x}_{\tau}-\mathbf{b}}\leq -\varepsilon \textbf{1}$ because the randomized stationary policy $\widetilde{x}_{\tau}$ is independent of $\mathcal{H}_{\tau}$, and the third inequality follows from 
$\|\nabla f(\gamma_{\tau-1})\|\leq M$ and $\|\widetilde{x}_{\tau}-\gamma_{\tau-1}\|\leq D$. By Triangle inequality, we have$ \|\mf Q(\tau)\|\geq \|\mf Q(t)\| - \|\mf Q(\tau)-\mf Q(t)\|$ and
\begin{multline*}
\expect{V\dotp{\nabla f(\gamma_{\tau-1})}{x_{\tau}-\gamma_{\tau-1}} 
+ \dotp{\mathbf Q(\tau)}{\mathbf{A}x_{\tau}-\mathbf{b}} |~\mathcal{H}_{\tau}}
\leq VMD - \varepsilon\|\mf Q(t)\| + \varepsilon\|\mf Q(\tau) - \mf Q(t)\|\\
\leq VMD - \varepsilon\|\mf Q(t)\|  + \varepsilon B(\tau-t),
\end{multline*}
where we use the bound $\|\mf Q(\tau+1) - \mf Q(\tau)\|\leq \|\mf A x_{\tau} - \mf b\|\leq B$ for any $\tau$.
Substituting this bound into \eqref{inter-t} gives
\[
\expect{\left.V\dotp{\nabla f(\gamma_{\tau-1})}{x_{\tau}-\gamma_{\tau-1}} 
+ \dotp{\mathbf Q(\tau)}{\mathbf{A}x_{\tau}-\mathbf{b}}\right|~\mathcal{H}_t}
\leq VMD - \varepsilon\|\mf Q(t)\|  + \varepsilon B(\tau-t).
\]
Substituting this bound into \eqref{eq:sum-bound}, we get 
\begin{align*}
&\expect{\|\mf Q(t+t_0)\|^2 - \|\mf Q(t)\|^2 |~\mathcal{H}_t}\\
 \leq&
VL\eta D^2t_0 + B^2t_0+ 2VMDt_0+\frac{4VK}{\eta} +2\varepsilon B\sum_{\tau = t}^{t+t_0-1}(\tau-t)- 2\varepsilon t_0\|\mf Q(t)\|\\
\leq&VL\eta D^2t_0 + B^2t_0+2VMDt_0+\frac{4VK}{\eta} + \varepsilon Bt_0^2- 2\varepsilon t_0 \|\mf Q(t)\|.
\end{align*}
Suppose $\|\mathbf{Q}(t)\|> \lambda = \frac{(VL\eta D^2 + B^2+2VMD)t_0+4VK/\eta + \varepsilon Bt_0^2+\varepsilon^2t_0^2}{\varepsilon t_0}$, then, 
\[
VL\eta D^2t_0 + B^2t_0+2VMDt_0+\frac{4VK}{\eta} + \varepsilon Bt_0^2\leq \varepsilon t_0\|\mf Q(t)\|,
\]
and it follows
\[
\expect{\|\mf Q(t+t_0)\|^2 - \|\mf Q(t)\|^2 |~\mathcal{H}_t}\leq -\varepsilon t_0\|\mf Q(t)\|.
\]
Since $\expect{\|\mf Q(t+t_0)\|^2 - \|\mf Q(t)\|^2 |~\mathcal{H}_t} = \expect{\|\mf Q(t+t_0)\|^2  |~\mathcal{H}_t}- \|\mf Q(t)\|^2$, rearranging the terms using the fact that $\|\mf Q(t)\|\geq \varepsilon t_0$,
\[
\expect{\|\mf Q(t+t_0)\|^2  |~\mathcal{H}_t} \leq \l(  \|\mf Q(t)\| - \varepsilon t_0/2 \r)^2.
\]
Taking square root from both sides and using Jensen's inequality,
\[
\expect{\|\mf Q(t+t_0)\| |~\mathcal{H}_t}\leq  \|\mf Q(t)\| -  \varepsilon t_0/2.
\]
On the other hand, we always have 
\[
\l| \|\mf Q(t+1)\| - \|\mf Q(t)\|  \r|\leq \|\mf A x_t-\mf b\|\leq B,~\forall t, 
\]
finishing the proof.
\end{proof}

\begin{proof}[Proof of Lemma \ref{lem:FW-gap-bound}]
First, we have
\begin{align*}
\mathcal{G}(\gamma)
&\leq \mathcal{G}(\widetilde\gamma)+
 \sup_{v\in\overline{\Gamma}^*,~\mathbf{A}v\leq\mathbf{b}}\left|\dotp{\nabla f(\gamma)}{\gamma - v} - \dotp{\nabla f(\widetilde\gamma)}{\widetilde\gamma - v}\right|\\
&\leq {\mathcal{G}(\widetilde\gamma)}+ { \sup_{v\in\overline{\Gamma}^*,~\mathbf{A}v\leq\mathbf{b}}\left|\dotp{\nabla f(\gamma)- \nabla f(\widetilde\gamma)}{v}\right|}
+{|\dotp{\nabla f(\gamma)}{\gamma} - \dotp{\nabla f(\widetilde\gamma)}{\widetilde\gamma}|}\\
&\leq {\mathcal{G}(\widetilde\gamma)}
+  \underbrace{{ \sup_{v\in\overline{\Gamma}^*,~\mathbf{A}v\leq\mathbf{b}}\left|\dotp{\nabla f(\gamma)- \nabla f(\widetilde\gamma)}{v}\right|}}_{(I)}
+\underbrace{{|\dotp{\nabla f(\gamma)}{\gamma- \widetilde\gamma}|}}_{(II)} 
+ \underbrace{{| \dotp{\nabla f(\gamma) - \nabla f(\widetilde\gamma)}{\widetilde\gamma} |}}_{(III)}.
\end{align*}
We have by Lemma \ref{lem:path-average},
\[
(I)\leq D\cdot {\|\nabla f(\gamma)- \nabla f(\widetilde\gamma)\|}\leq DL\cdot {\|\gamma - \widetilde\gamma\|}.
\]
Similarly, we have by Cauchy-Schwarz inequality, 
\begin{align*}
(II) \leq& {\|\nabla f(\gamma)\|\cdot \|\gamma - \widetilde\gamma  \|}\leq M\|\gamma - \widetilde\gamma  \|,\\
(III)\leq&{\|\nabla f(\gamma) - \nabla f(\widetilde\gamma)\|\cdot \|\widetilde\gamma  \|},
\leq DL\cdot{\|\gamma - \widetilde\gamma  \|}.
\end{align*}
Overall, we get
\[
\mathcal{G}(\gamma)  \leq  {\mathcal{G}(\widetilde\gamma)}+ (2DL+M)\|\gamma-\widetilde\gamma\|.
\]
Exchanging the position of $\gamma$ and $\widetilde{\gamma}$, and repeating the argument give the result.
\end{proof}

\end{document}